\newtheorem{theorem}{Theorem}[section]
\newtheorem{lemma}[theorem]{Lemma}
\newtheorem{proposition}[theorem]{Proposition}
\newtheorem{corollary}[theorem]{Corollary}
\newcommand{\Glimsup}{\mathop{\textrm{$\Gamma\!$--$\limsup$}}\displaylimits}
\newcommand{\Gliminf}{\mathop{\textrm{$\Gamma\!$--$\liminf$}}\displaylimits}
\numberwithin{equation}{section}
\newcommand{\mc}[1]{{\mathcal #1}}
\newcommand{\mf}[1]{{\mathfrak #1}}
\newcommand{\mb}[1]{{\mathbf #1}}
\newcommand{\bb}[1]{{\mathbb #1}}
\newcommand{\ms}[1]{{\mathscr #1}}
\newcommand{\<}{\langle}
\renewcommand{\>}{\rangle}
\renewcommand{\epsilon}{\varepsilon}
\renewcommand{\phi}{\varphi}
\renewcommand{\hat}{\widehat}
\renewcommand{\tilde}{\widetilde}
\newcommand{\upbar}[1]{\,\overline{\! #1}}
\title[Burgers equation in a bounded interval]{Action functional and
quasi-potential for the Burgers equation in a bounded interval}
\begin{document}

\begin{abstract}
  Consider the viscous Burgers equation $u_t + f(u)_x = \varepsilon\,
  u_{xx}$ on the interval $[0,1]$ with the inhomogeneous Dirichlet
  boundary conditions $u(t,0) = \rho_0$, $u(t,1) = \rho_1$.  The flux
  $f$ is the function $f(u)= u(1-u)$, $\varepsilon>0$ is the
  viscosity, and the boundary data satisfy $0<\rho_0<\rho_1<1$.  We
  examine the quasi-potential corresponding to an action functional,
  arising from non-equilibrium statistical mechanical models,
  associated to the above equation.  We provide a static variational
  formula for the quasi-potential and characterize the optimal paths
  for the dynamical problem.  In contrast with previous cases, for
  small enough viscosity, the variational problem defining the quasi
  potential admits more than one minimizer. This phenomenon is
  interpreted as a non-equilibrium phase transition and corresponds to
  points where the super-differential of the quasi-potential is not a
  singleton.
\end{abstract}

\author [L. Bertini] {Lorenzo Bertini}
\address{\noindent Lorenzo Bertini \hfill\break\indent 
Dipartimento di Matematica, Universit\`a di Roma `La Sapienza' 
\hfill\break\indent 
P.le Aldo Moro 2, 00185 Roma, Italy}
\email{bertini@mat.uniroma1.it}

\author[A. De Sole]{Alberto De Sole}
\address{\noindent Alberto De Sole \hfill\break\indent 
Dipartimento di Matematica, Universit\`a di Roma `La Sapienza' 
\hfill\break\indent 
P.le Aldo Moro 2, 00185 Roma, Italy}
\email{desole@mat.uniroma1.it}

\author[D. Gabrielli]{Davide Gabrielli}
\address{\noindent Davide Gabrielli \hfill\break\indent 
 Dipartimento di Matematica, Universit\`a dell'Aquila
\hfill\break\indent 
67100 Coppito, L'Aquila, Italy
}
\email{gabriell@univaq.it}

\author[G. Jona-Lasinio]{Giovanni Jona-Lasinio}
\address{\noindent Giovanni Jona-Lasinio
\hfill\break\indent 
 Dipartimento di Fisica and INFN, Universit\`a di Roma La Sapienza
\hfill\break\indent 
 P.le A.\ Moro 2, 00185 Roma, Italy
}
\email{gianni.jona@roma1.infn.it}

\author[C. Landim]{Claudio Landim} 
\address{Claudio Landim
  \hfill\break\indent IMPA \hfill\break\indent Estrada Dona Castorina
  110, \hfill\break\indent
J. Botanico, 22460 Rio de Janeiro, Brazil\hfill\break\indent
  {\normalfont and} \hfill\break\indent CNRS UMR 6085, Universit\'e de
  Rouen, \hfill\break\indent Avenue de l'Universit\'e, BP.12,
  Technop\^ole du Madril\-let, \hfill\break\indent
F76801 Saint-\'Etienne-du-Rouvray, France.} 
\email{landim@impa.br}

\maketitle

\thispagestyle{empty}

\section{Introduction}
\label{sec0}

We consider an infinite dimension version of the classical
Freidlin-Wentzell variational problem for the quasi-potential.  We
first briefly recall this topic in the context of diffusion processes
in $\bb R^n$ \cite{FW}. Let $b$ be a smooth vector field in $\bb R^n$
and consider the stochastic perturbation of the dynamical systems
$\dot x = b(x)$ given by
\begin{equation*}
\dot X_\gamma = b(X_\gamma) +\sqrt{2 \gamma} \, \dot w\;,
\end{equation*}
where $\dot w$ is a white noise. Under suitable assumptions on $b$,
the process $X_\gamma$ has a unique invariant measure $\mu_\gamma$.
The Freidlin-Wentzell theory provides a variational expression for the
asymptotics of this invariant measure in the weak noise limit
$\gamma\downarrow 0$. To each path $X:(-\infty,0]\to \bb R^n$
associate the \emph{action}
\begin{equation}
\label{action-fd}
I(X) \;=\; \frac 14 \int_{-\infty}^0 \big|\dot X(t) - b(X(t))\big|^2 
\,dt \;.
\end{equation}
In the sequel we assume that the vector field $b$ has a unique,
globally attractive, equilibrium point $\upbar x$.  The
quasi-potential $V:\bb R^n\to [0,+\infty)$ is defined by
\begin{equation}
\label{qpfw}
V(x) \;=\; \inf\,\big\{I(X)\,,\: X(0)=x\,,\: X(t)\to \upbar x 
\textrm{ as } t\to-\infty \big\}\;.
\end{equation}
Namely, $V(x)$ is the minimal action to reach $x$ starting from the
equilibrium point $\upbar x$.
For each Borel set $B\subset \bb R^n$ we then have, as
$\gamma\downarrow 0$,
\begin{equation*}
\mu_\gamma(B) \;\asymp\; \exp\big\{-\gamma^{-1} \inf_{x\in B} V(x) 
\big\}\;.
\end{equation*}
If the vector field $b$ is conservative, namely $b=-\nabla U$ for some
$U:\bb R^n\to \bb R$, then $V(x)=U(x)-U(\upbar x)$, i.e.\ the
quasi-potential coincides with the potential. In general, though,
there is no simple expression for the quasi-potential.

In this finite dimensional setting, the quasi-potential $V$ is
Lipschitz \cite{FW}, in particular it is a.e.\ differentiable. As
discussed in \cite{DD}, for some ``special'' points $x\in \bb R^n$,
the function $V$ might however have ``corners'', namely the
super-differential of $V$ might not be a singleton.  From a dynamical
point of view, for such points $x$ there would exist more than a
single minimizer for the variational problem \eqref{qpfw}.  We refer
to the examples considered in the physical literature \cite{GT,J,MS}
for a discussion on the physical interpretation of this lack of
uniqueness.  In this article we show that this phenomenon occurs for
an infinite dimensional dynamical system. As far as we know, this is
the first concrete example in which such a result is analytically
proven.

As shown in \cite{D,DD,Pert}, if $\bb H$ denotes the Hamiltonian
associated to the action \eqref{action-fd}, then the quasi-potential
$V$ is a viscosity solution, the correct PDE formulation of the
variational problem \eqref{qpfw} in presence of ``corners'', to the
Hamilton-Jacobi equation $\bb H(x,DV)=0,\,x\in\bb R^n$.

We examine in this article an infinite dimensional version of the
previous variational problem.  As the basic dynamical system, we
consider the following non-linear parabolic equation on the interval
$[0,1]$
\begin{equation}
\label{lln}
u_t + f(u)_x = \epsilon \big( D(u) u_x \big)_x
\end{equation}
with the inhomogeneous Dirichlet boundary condition $u(t,0)=\rho_0$,
$u(t,1)=\rho_1$. In the above equation, $f$ is the flux, $D>0$ the
diffusion coefficient and $\epsilon>0$ the viscosity.  To introduce
the associated action functional, add an external ``controlling''
field $E=E(t,x)$ to obtain the perturbed equation
\begin{equation}
\label{pbe}
u_t + f(u)_x +2\epsilon \big( \sigma(u) E\big)_x 
= \epsilon \big( D(u) u_x \big)_x \;,
\end{equation}
where $\sigma(u)\ge 0 $ is the mobility. Denote by $u^E$ the solution
of this equation. The action of a path $u:(-\infty,0]\times [0,1]\to
\bb R$ is given by
\begin{equation}
\label{f14}
I^\epsilon(u) \;=\; \inf
\epsilon \int_{-\infty}^0 \int_0^1 \sigma(u) \, E^2 \, dx\,dt\;,
\end{equation}
where the infimum is carried over all $E$ such that $u^E=u$.  The
quasi-potential is then introduced as in the finite dimensional
setting, namely $V_\epsilon$ is the functional on the set of functions
$\rho:[0,1]\to \bb R$ defined by
\begin{equation}
\label{qpgen}
V_\epsilon(\rho) = \inf\, \big\{ I^\epsilon(u)\,:\: u(0)=\rho\,,\:
u(t)\to\upbar\rho_\epsilon \textrm{ as } t\to -\infty \big\}\;,
\end{equation}
where $\upbar\rho_\epsilon$ is the unique stationary solution of
\eqref{lln}. 

Informally, as in the Freidlin-Wentzell theory, one can add a
stochastic perturbation to \eqref{lln},
\begin{equation}
  \label{lln-stoc}
  u^\gamma_t + f(u^\gamma)_x = \epsilon \big( D(u^\gamma) u^\gamma_x \big)_x
  +\big(\sqrt{2\,\epsilon\,\gamma\,\sigma(u^\gamma)}\:
  \dot{w}_\gamma\big)_x\;, 
\end{equation}
where $\dot w_\gamma$ is a white noise in time that becomes also white
in space as $\gamma\downarrow 0$.  Then, informally, the finite
dimensional theory carries over to the present setting.  In
particular, the quasi-potential $V_\epsilon$ in \eqref{qpgen}
describes the asymptotics for the invariant measure of the process
$u^\gamma$ as $\gamma\downarrow 0$.  We refer to \cite{mauro} for an
analysis of the large deviation properties of the stochastic PDE
\eqref{lln-stoc}, with periodic boundary conditions, in the joint
limit $\gamma\downarrow 0$ and  $\epsilon\downarrow 0$.

Our main motivation for the analysis of the action functional
\eqref{f14} and the quasi-potential \eqref{qpgen} comes, however, from
non-equilibrium statistical mechanics. For a class of interacting
particle systems, the so-called stochastic lattice gases, it has been
shown that equation \eqref{lln} describes the typical evolution of the
empirical density in the diffusive scaling limit \cite{KL}.  Moreover,
the functional $I^\epsilon$, as in the Freidlin-Wentzell theory, gives
the corresponding asymptotic probability of observing deviations from
the typical behavior \cite{blm1,KL,KOV}.
 
In the case of equilibrium models, the flux $f$ vanishes and the
boundary conditions are equal. For such models, which are analogous to
gradient vector fields in the finite dimensional situation, the
quasi-potential \eqref{qpgen} does not depend on the viscosity
$\epsilon$ and coincides with the thermodynamic free energy functional
of the underlying microscopic model, whose invariant measure has the
standard Gibbs form.  If the flux $f$ does not vanishes but the
boundary data are still equal, $\rho_0=\rho_1$, the quasi-potential
does not depend on $f$ and it is equal to the one of the corresponding
equilibrium model, see \cite{BM,bdgjl10} for the analogous result in
the case of periodic boundary conditions.  On the other hand, if the
boundary data are not equal, in general there is no simple expression
for the invariant measure of the microscopic dynamics, often called
stationary non-equilibrium state.  In order to analyze the behavior of
such invariant measure in the thermodynamic limit, in \cite{bdgjl2} we
introduced the dynamical/variational approach outlined above.  In
particular, the quasi-potential $V_\epsilon$ plays an analogous role
to the free energy for equilibrium systems.  This gives actually a
natural way to extend the notion of thermodynamic potential to
non-equilibrium systems.

As it has been shown by concrete examples
\cite{bdgjl2,bdgjl3,bgl1,BGL,d1,de}, for non-equilibrium models the
quasi-potential \eqref{qpgen} presents peculiar features.  While for
equilibrium models $V_\epsilon$ is always a convex local functional,
i.e.\ of the form $V_\epsilon(\rho) = \int_0^1 v(\rho(x))\, dx$ for
some convex real function $v$, for non-equilibrium models $V_\epsilon$
might be non-local and non-convex.  In terms of the underlying
microscopic model, the non-locality of the quasi-potential corresponds
to the presence of long-range correlations which are believed to be a
generic feature of stationary non-equilibrium states \cite{DKS}.

The main purpose of the present paper is to show, by a concrete
example, that for non-equilibrium models the quasi-potential might
have ``corners'', equivalently that the minimizer for the variational
problem \eqref{qpgen} is not unique. We shall analyze the model
defined by a constant diffusion coefficient, $D=1$, while the flux and
mobility are given by $f(u)=\sigma(u)= u(1-u)$, $u\in [0,1]$. Then the
parabolic equation \eqref{lln} becomes the viscous Burgers equation
and the action functional $I^\epsilon$ in \eqref{f14} can be obtained
as the large deviation rate functional of the so-called weakly
asymmetric simple exclusion process \cite{blm1,KOV}. As shown in
\cite{bg,f}, the quasi-potential \eqref{qpgen} is also the large
deviation rate function for the invariant measure.

When $\rho_0> \rho_1$, so that both the boundary conditions and the
flux $f$ ``push'' the density $u\in [0,1]$ to the right, the behavior
of the invariant measure has been discussed in \cite{de} by
combinatorial techniques. In particular, in \cite{de} a static
variational characterization for the rate function of the invariant
measure is derived in terms of a one-dimensional boundary value
problem. More recently, the same model is analyzed in \cite{bgl1}
where it is shown that the quasi-potential \eqref{qpgen} can be
written in terms of the variational expression derived in \cite{de}
and the optimal paths for \eqref{qpgen} are characterized. We
emphasize that in this case, i.e.\ for $\rho_0>\rho_1$, the
quasi-potential has no corners and the minimizer for \eqref{qpgen} is
unique.

In this paper we examine the same model but in the more interesting
situation in which $\rho_0<\rho_1$, so that there is an effective
competition between the flux and the boundary conditions. Our main
results are summarized as follows.  By analyzing the variational
problem \eqref{qpgen}, we establish a static variational
characterization of $V_\epsilon$ analogous to the one in
\cite{bgl1,de}. We emphasize that in the case here discussed there is
no uniqueness for the minimizer of \eqref{qpgen} and this introduces
few technical complications. We then discuss the variational
convergence of the quasi-potential $V_\epsilon$ in the inviscid limit
$\epsilon\downarrow 0$. In particular, we show that in this limit we
recover the functional derived in \cite{dls3} in the context of the
boundary driven asymmetric exclusion process.  By a perturbation
argument with respect to the limiting case $\epsilon=0$ we then show,
provided the viscosity $\epsilon$ is small enough, that there exist
functions $\rho$ such that the minimizer for \eqref{qpgen} is not
unique. We also show that in such points $\rho$ the super-differential
of $V_\epsilon$ is not a singleton.  
In the context of equilibrium statistical mechanics, the existence of
more than a single tangent functional to the quasi-potential, which in this
case coincides with the free energy functional, is due to the
occurrence of phase transitions.
We therefore interpret the fact that the super-differential of
$V_\epsilon$ is not a singleton as a non-equilibrium phase transition.
Finally, we discuss the connection of the quasi-potential $V_\epsilon$
to the Hamilton-Jacobi equation $\bb H_\epsilon(\rho,D V)=0$, where
$\bb H_\epsilon$ is the Hamiltonian associated to the action
\eqref{f14}.

\section{Notation and results}
\label{sec2}

\subsection*{Viscous Burgers equation}

Consider the viscous Burgers equation on the interval $[0,1]$ with
inhomogeneous Dirichlet boundary conditions at the endpoints namely,
\begin{equation} 
\label{eq:1}
\begin{cases}
u_t + f(u)_x =  \varepsilon u_{xx} \\
u(t,0) = \rho_0 \,,\quad  u(t,1) = \rho_1\;,
\end{cases}
\end{equation}
where $u=u(t,x)$ is a scalar function and hereafter we denote partial
derivatives with subscripts.  The \emph{flux} $f$ is the function
$f(u)= u(1-u)$, $\varepsilon>0$ is the viscosity, and the boundary
data, fixed throughout the paper, satisfy $0<\rho_0<\rho_1<1$.

Simple computations show that the unique stationary solution
$\upbar{\rho}_\varepsilon$ of the viscous Burgers equation
\eqref{eq:1} can be described as follows. Let $J_0=J_0(\rho_0,\rho_1):=
\min_{r\in[\rho_0,\rho_1]} f(r)$. For each $\varepsilon>0$ there
exists a unique $J_\varepsilon\in (-\infty, J_0)$ such that
\begin{equation*}
\int_{\rho_0}^{\rho_1} \frac{\varepsilon}{f(r)-J_\varepsilon}\, dr = 1\;.
\end{equation*}
The function $\upbar{\rho}_\varepsilon$ is then obtained by
integrating $f( \upbar{\rho}_\varepsilon ) - \varepsilon
(\upbar{\rho}_\varepsilon)_x = J_\varepsilon $
with the boundary condition $\upbar{\rho}_\varepsilon(0)=\rho_0$.  In
particular, the function $\upbar{\rho}_\varepsilon$ is strictly
increasing.  We remark that the constant $J_\varepsilon$ can be
interpreted as the current maintained by the stationary solution
$\upbar{\rho}_\varepsilon$. Let $\varphi_i := \log
[\rho_i/(1-\rho_i)]\in\bb R$, $i=0,1$, and set $\varepsilon_0 :=
1/(\varphi_1-\varphi_0)$. Clearly, $J_\varepsilon$ is increasing as 
$\varepsilon$ decreases and $J_{\varepsilon_0} = 0$; therefore
$0<J_{\varepsilon} < J_0$ for $0<\varepsilon < \varepsilon_0$ and
$J_{\varepsilon} < 0$ for $\varepsilon > \varepsilon_0$.  A simple
computation shows also that $\lim_{\varepsilon\downarrow 0}
J_\varepsilon = J_0$.

By standard arguments for parabolic equations, see e.g.\ the 
more sophisticated analysis in \cite{GK}, the stationary solution
$\upbar{\rho}_\varepsilon$ is globally attractive for the flow defined
by \eqref{eq:1}. More precisely, fix $\epsilon>0$, let $u(t;\rho)$,
$t\ge 0$, be the solution to \eqref{eq:1} with initial condition
$u(0,\cdot;\rho)=\rho(\cdot)$, and assume that $\rho: [0,1] \to \bb R$ is a 
continuous function satisfying the boundary conditions $\rho(0)=\rho_0$,
$\rho(1)=\rho_1$.  Then $u(t;\rho)$ converges in the $C^1$ topology to
$\upbar{\rho}_\varepsilon$ as $t\to\infty$.  Furthermore, this
convergence is uniform with respect to $\rho$ in a bounded set of
$C([0,1])$ and holds with an exponential rate.

\subsection*{The action functional}

To define rigorously the action functional informally introduced in
\eqref{f14}, we need to introduce some notation.  Given $T>0$, the
inner products in $L^2([0,1])$ and $L^2([-T,0]\times [0,1])$ are
denoted by $\<\cdot,\cdot\>$ and $\<\!\<\cdot,\cdot\>\!\>$
respectively.  We consider the space $L^\infty([0,1])$ equipped with
the weak* topology and let $M$ be the set 
\begin{equation*}
M\;=\; \big\{\rho\in L^\infty([0,1]):\, 0\le \rho\le 1\big\}
\end{equation*}
equipped with the relative topology. Then $M$ is a compact Polish
space, i.e.\ complete metrizable and separable.  Recall that, by
definition of the weak* topology, a sequence $\{\rho^n\} \subset M$
converges to $\rho$ in $M$ if and only if $\langle\rho^n, g\rangle \to
\langle \rho, g \rangle$ for any $g\in L^1([0,1])$. For $T>0$, we let
$C([-T,0];M)$ be the set of continuous paths $u:[-T,0] \to M$ equipped
with the topology of uniform convergence.

Let $C^\infty_{0}([-T,0]\times [0,1])$ be the space of smooth
functions $H:[-T,0]\times [0,1]\to \bb R$ satisfying $H(t,0)=H(t,1)=0$
for $t\in[-T,0]$ and $H(-T,x)=H(0,x)=0$ for $x\in[0,1]$.  Given 
$u \in C([-T,0];M)$, let $L^\varepsilon_u : C^\infty_{0}([-T,0]\times
[0,1]) \to \bb R$ be the linear functional defined by
\begin{equation}
\label{e:2.5}
\begin{split}
L^\varepsilon_u(H) &:=\;
-\; \<\!\<  u , H_t\>\!\>
\; -\; \<\!\< f(u),H_x\>\!\>
\; - \; \varepsilon \, \<\!\< u, H_{xx}\>\!\> \\
& + \; {\varepsilon} \int_{-T}^0 \big[ \rho_1  \, H_x(t,1)  
\;-\; \rho_0 \, H_x(t,0)\big]  \, dt \;. 
\end{split}
\end{equation}
If $u$ is a smooth function satisfying the boundary conditions
$u(t,0)=\rho_0$ and $u(t,1)=\rho_1$ for $t\in[-T,0]$, then
$L^\varepsilon_u(H) = \<\!\< u_t + f(u)_x - \varepsilon u_{xx}
, H \>\!\>$.  Moreover, the functional $L^\varepsilon_u$
vanishes if and only if $u$ is a weak solution to \eqref{eq:1}.

Let $\sigma:[0,1] \to [0,+\infty)$ be the \emph{mobility} of the
system; we assume it is the function defined by $\sigma(a) =a(1-a)$.
Given $u\in C([-T,0],M)$, let
\begin{equation*}
\<\!\< u_x , u_x \>\!\> \; : = \; \sup_{H}
\big\{ -\, 2 \<\!\< u , H_x \>\!\> -  \<\!\< H , H \>\!\>\big\}\;,
\end{equation*}
where the supremum is carried over all smooth functions
$H:[-T,0]\times [0,1] \to \bb R$ such that $H(t,0) = H(t,1)=0$, 
$t\in [-T, 0]$. 
The action functional $I^\varepsilon_{{[-T,0]}}: C([-T,0];M) \to
[0,+\infty]$ is then defined by
\begin{equation}
\label{e:2.6}
I^\varepsilon_{[-T,0]}(u):= 
\begin{cases}
\displaystyle{
\sup_H \Big\{ L^\varepsilon_u(H) \;-\; 
\varepsilon \, \<\!\< H_x,\sigma(u) \, H_x \>\!\> \Big\}
}
& \textrm{if } \<\!\< u_x, u_x\>\!\><+\infty 
\\
+\infty &  \textrm{otherwise }
\end{cases}
\end{equation}
where the supremum is carried over all functions $H$ in $C^\infty_{0}
([-T,0]\times [0,1])$.  Clearly, $I^\varepsilon_{[-T,0]}(u)$ vanishes
if and only if $u \in C([-T,0];M)$ admits a square integrable
derivative and is a weak solution to \eqref{eq:1}.  We refer to
\cite{blm1} for equivalent definitions of the action functional
$I^\varepsilon_{[-T,0]}$. We remark that in \cite{blm1} the action
functional is defined with the condition $\<\!\< u_x,
u_x\>\!\>< +\infty $ replaced by the stronger condition
$\<\!\< u_x, \sigma(u)^{-1} u_x\>\!\> < +\infty$. The
argument in \cite[Lemma~4.9]{blm1} shows however that if the supremum
on the right hand side of \eqref{e:2.6} is finite these conditions are
in fact equivalent.

In order to state the connection of the action functional to the
perturbed parabolic problem \eqref{pbe} we need few more definitions.
Denote by $C^1([0,1])$ the space of continuously differentiable
functions $h: [0,1] \to \bb R$ and let $C^1_0([0,1]):=\{h\in
C^1([0,1]):\,h(0)=h(1)=0\}$.  Given a positive bounded measurable
function $\gamma:[0,1]\to [0,+\infty)$, let $\mc H^1_0(\gamma)$ be the
Sobolev space induced by $C^1_0([0,1])$ endowed with the inner product
\begin{equation*}
\<h,g\>_{1,\gamma} \;=\; \int_0^1 h_x \, g_x \, \gamma \, dx\;.
\end{equation*}
To be precise, the induced space $\mc H^1_0(\gamma)$ is obtained by
identifying and completing elements $h\in C^1_0([0,1])$ with respect
to the seminorm $\<h,h\>_{1,\gamma}^{1/2}$. When $\gamma=1$
the space $\mc H^1_0(\gamma)$ is the standard Sobolev space on
$[0,1]$, in this case we denote it simply by $\mc H_0^1$. 
Note that in the above equation, as well as below, we drop from the
notation the explicit dependence on the integration variable when
there is no ambiguity.

Denote by $| \cdot |_{1,\gamma}$ the norm of $\mc H^1_0(\gamma)$ and
let $\mc H^{-1}_0(\gamma)$ be the dual space of $\mc H^1_0(\gamma)$. 
It is equipped with the dual norm $| \cdot |_{-1,\gamma}$ defined by
\begin{equation*}
| \ell |^2_{-1,\gamma} \;=\;
\sup \big\{ 2 \< \ell, h\> \;-\; | h|^2_{1,\gamma} 
\:,\; h \in \mc H^1_0(\gamma) \big\}\;,
\end{equation*}
where $\< \ell, h\>$ stands also for the value at $h$ of the linear
functional $\ell$.

Fix a path $u$ in $C([-T,0], M)$ and denote by $\mf H^1_0(\sigma(u))$
the Hilbert space induced by $C^\infty_{0} ([-T,0]\times [0,1])$
endowed with the inner product
$\langle\!\<\cdot,\cdot\>\!\rangle_{1,\sigma(u)}$ defined by
\begin{equation*}
\langle\!\< H, G \>\!\rangle_{1,\sigma(u)}
\;=\; \int_{-T}^0 \langle H , G \rangle_{1,\sigma(u(t))}\, dt
\end{equation*}
and let $\Vert\cdot\Vert_{1,\sigma(u)}$ be the associated norm.
Let $\mf H^{-1}_0 (\sigma(u))$ be the dual of $\mf H^{1}_0
(\sigma(u))$; it is a Hilbert space equipped with the norm $\Vert \cdot
\Vert_{-1, \sigma(u)}$ defined by
\begin{equation*}
\|L\|^2_{-1,\sigma(u)} \;=\; \sup_{H}
\Big\{ 2 \<\!\<L, H \>\!\> \;-\;  
\Vert H \Vert^2_{1, \sigma(u)} \Big \}\;,
\end{equation*}
where the supremum is carried over all functions $H\in C^\infty_{0}
([-T,0]\times [0,1])$, equivalently over all $H\in \mf H^1_0(\sigma(u))$,
and $\<\!\<L, H \>\!\>$ stands for the value of the linear functional
$L$ at $H$.  The next statement is proven in \cite{blm1}.
A functional $f : X \to (-\infty,+\infty]$ defined on a Polish space
$X$ is said to be coercive if all its sub-level sets are precompact:
for all $t\in \bb R$, $\{ x : f(x) \le t\}$ is precompact.

\begin{theorem}
\label{s01}
For each $\epsilon>0$ and $T>0$ the functional
$I^\varepsilon_{[-T,0]}: C([-T,0];M) \to [0,+\infty]$ is coercive and
lower semicontinuous, namely it has compact sub-level sets.  Moreover,
given $u$ such that $I^\varepsilon_{[-T,0]}(u)<+\infty$, there exists
a unique $H=H(u)$ in $\mf H^{1}_0(\sigma (u))$ such that $u$ is a weak
solution to
\begin{equation*}
\begin{cases}
u_t + f(u)_x = \varepsilon u_{xx} - 2 \varepsilon (\sigma (u) H_x)_x
\\
u(t,0) = \rho_0 \,,\quad  u(t,1) = \rho_1\;.
\end{cases}
\end{equation*}
In this case, the linear functional $L^\varepsilon_u$, as 
defined in \eqref{e:2.5}, 
extends to a linear functional on $\mf H^1_0(\sigma(u))$, that we
denote by $u_t+f(u)_x-\varepsilon u_{xx}$, and
\begin{equation*}
I^\varepsilon_{[-T,0]}(u) \; =\; \varepsilon \,
\big \Vert H  \big\Vert^2_{1, \sigma (u)}
\;=\; \frac 1{4\varepsilon} \,
\big \Vert u_t + f(u)_x - \varepsilon u_{xx} \big\Vert^2_{-1, \sigma (u)} 
\;.
\end{equation*}
\end{theorem}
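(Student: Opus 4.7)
The plan is to interpret $I^\varepsilon_{[-T,0]}(u)$ as a Legendre transform in the test function $H$. For $u$ with $\<\!\< u_x, u_x\>\!\> < +\infty$, the supremand in \eqref{e:2.6} is $L^\varepsilon_u(H) - \varepsilon \Vert H \Vert^2_{1,\sigma(u)}$, a linear minus quadratic expression in $H$. If the supremum is finite, then $L^\varepsilon_u$ extends to a bounded linear functional on $\mf H^1_0(\sigma(u))$ with $\Vert L^\varepsilon_u \Vert^2_{-1,\sigma(u)} = 4\varepsilon \, I^\varepsilon_{[-T,0]}(u)$. By Riesz representation, there is a unique $H \in \mf H^1_0(\sigma(u))$ satisfying $L^\varepsilon_u(G) = 2\varepsilon \<\!\< G, H\>\!\>_{1,\sigma(u)}$ for every smooth test function $G$ vanishing on the parabolic boundary. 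Inserting the definition \eqref{e:2.5} and integrating by parts in $x$ on the right-hand side turns this identity into the weak formulation of
\begin{equation*}
u_t + f(u)_x = \varepsilon u_{xx} - 2\varepsilon (\sigma(u) H_x)_x \;,
\end{equation*}
and evaluating the supremand at this optimal $H$ produces $I^\varepsilon_{[-T,0]}(u) = \varepsilon \Vert H \Vert^2_{1,\sigma(u)} = (4\varepsilon)^{-1}\Vert L^\varepsilon_u\Vert^2_{-1,\sigma(u)}$.

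Lower semicontinuity is essentially built into the definition. For each fixed smooth $H \in C^\infty_0([-T,0]\times[0,1])$, the map $u \mapsto L^\varepsilon_u(H) - \varepsilon \<\!\< H_x, \sigma(u) H_x\>\!\>$ is continuous on $C([-T,0];M)$: every occurrence of $u$ is either linear or of the polynomial form $\sigma(u)=u(1-u)$, paired against a fixed smooth function in $L^1$, so weak* convergence of $u(t,\cdot)$ is enough to pass to the limit. Hence $I^\varepsilon_{[-T,0]}$, being a supremum of continuous functionals, is lower semicontinuous; the side constraint $\<\!\< u_x, u_x\>\!\> < +\infty$, itself a supremum of continuous functionals, is also lower semicontinuous. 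The fact, established in \cite[Lemma~4.9]{blm1}, that finiteness of the supremum in \eqref{e:2.6} automatically forces $\<\!\< u_x, \sigma(u)^{-1}u_x\>\!\> < +\infty$, and a fortiori $\<\!\< u_x, u_x\>\!\><+\infty$, makes the two definitions coincide.

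For coercivity the strategy is Arzel\`a--Ascoli in the Polish space $C([-T,0];M)$. Since $M$ is compact, it suffices to prove equicontinuity in $t$ of any level set $\{I^\varepsilon_{[-T,0]} \le \ell\}$. Testing the PDE identified above against a smooth $g$ with $g(0)=g(1)=0$ and integrating from $s$ to $t$ yields
\begin{equation*}
\<u(t)-u(s), g\> \;=\; \int_s^t \big[ \<f(u), g_x\> + \varepsilon \<u, g_{xx}\> \big]\, dr \;-\; 2\varepsilon \int_s^t \<\sigma(u) H_x, g_x\>\, dr \;.
\end{equation*}
The first integral is bounded by $C(g)|t-s|$ using $u \in M$; the second, by Cauchy--Schwarz in the $\sigma(u)$ inner product together with $\sigma(u)\le 1/4$ and $\Vert H \Vert^2_{1,\sigma(u)} \le \ell/\varepsilon$, is bounded by $C(g,\varepsilon,\ell)\,|t-s|^{1/2}$. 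Testing against a countable family $\{g_n\} \subset C^1_0([0,1])$ that metrizes the weak* topology on $M$ yields uniform equicontinuity of the level set, and hence precompactness in $C([-T,0];M)$.

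The principal technical obstacle is making the spatial integration by parts genuinely rigorous: a path $u \in C([-T,0];M)$ satisfying only $\<\!\< u_x, u_x\>\!\> < +\infty$ does not a priori admit a pointwise-in-$t$ trace at $x=0,1$, so identifying $L^\varepsilon_u$ with $u_t + f(u)_x - \varepsilon u_{xx}$ as a linear functional on $\mf H^1_0(\sigma(u))$, including the boundary terms associated to $\rho_0, \rho_1$ in \eqref{e:2.5}, requires extracting a trace in a suitable weak sense and matching it to the prescribed boundary data. Together with the equivalence between $\<\!\< u_x, u_x\>\!\> < +\infty$ and $\<\!\< u_x, \sigma(u)^{-1}u_x\>\!\> < +\infty$ on the set of finite action, this is the delicate content of \cite{blm1}; once granted, the Legendre--Riesz argument above delivers the theorem.
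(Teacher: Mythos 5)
The paper does not actually prove Theorem~\ref{s01}: it simply states ``The next statement is proven in \cite{blm1}'' and cites the reference, so there is no in-paper argument for you to match against. On its own terms, your Legendre--Riesz reduction is the right structural idea and correctly produces the unique $H\in\mf H^1_0(\sigma(u))$, the weak form of the perturbed equation, and the two norm identities, and you are right to flag the trace/integration-by-parts issue as a point that needs care.

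The genuine gap is in the lower semicontinuity step. You claim that for each fixed $H\in C^\infty_0([-T,0]\times[0,1])$ the map $u\mapsto L^\varepsilon_u(H)-\varepsilon\<\!\< H_x,\sigma(u)H_x\>\!\>$ is continuous on $C([-T,0];M)$ because ``every occurrence of $u$ is either linear or of the polynomial form $\sigma(u)=u(1-u)$, paired against a fixed smooth function in $L^1$, so weak* convergence of $u(t,\cdot)$ is enough to pass to the limit.'' This is false: weak* convergence does not commute with nonlinear superposition. If $u^n(x)=\mb 1_{\{\sin(2\pi n x)>0\}}$ then $u^n\rightharpoonup^* 1/2$ in $L^\infty([0,1])$ but $(u^n)^2=u^n\rightharpoonup^* 1/2\neq 1/4=(1/2)^2$, so $\int g\,(u^n)^2\,dx$ does not converge to $\int g\,u^2\,dx$ for generic smooth $g$. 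In particular the terms $\<\!\< f(u),H_x\>\!\>$ and $\<\!\< H_x,\sigma(u)H_x\>\!\>$ are not continuous in the topology of $C([-T,0];M)$, and the quadratic piece $\varepsilon\int u^2 H_x^2$ is only weak*-lower-semicontinuous, which helps on one term but does not rescue $\int u^2 H_x$ where $H_x$ changes sign. So $I^\varepsilon_{[-T,0]}$ is not a supremum of continuous functionals, and your claim that lower semicontinuity is ``essentially built into the definition'' does not hold.

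The missing ingredient is the a priori Sobolev bound on sub-level sets, the analogue of Lemma~\ref{s04} in the present paper (or Lemma~4.9 of \cite{blm1}): finiteness of $I^\varepsilon_{[-T,0]}(u)$, together with $u\in M$, controls $\int_{-T}^0 |u(t)-\upbar\rho_\epsilon|^2_{\mc H^1_0}\,dt$. Along a sequence $u^n\to u$ in $C([-T,0];M)$ with $I^\varepsilon_{[-T,0]}(u^n)\le\ell$, this bound together with the time-equicontinuity you derived gives, via Rellich--Kondrachov, strong $L^2$ convergence $u^n(t)\to u(t)$ for a.e.\ $t$ (and in fact in $L^2([-T,0]\times[0,1])$), and only then do the nonlinear functionals $\int f(u^n)H_x$, $\int\sigma(u^n)H_x^2$ pass to the limit. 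Thus lower semicontinuity is a consequence of the energy estimate plus compact embedding, not a formal supremum argument. The coercivity proof you sketch is fine for precompactness, but the conclusion that sub-level sets are compact needs this corrected lower semicontinuity to close.
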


\subsection*{The quasi-potential}

The \emph{quasi-potential} \cite{FW} associated to the family 
of action functionals $I^\varepsilon_{[-T,0]}$, $T>0$,  is the functional 
$V_\varepsilon\,: M \to \bb [0,+\infty]$ defined by
\begin{equation}
\label{qp-1}
V_\varepsilon (\rho) \;:=\; \inf_{T>0} \; \inf \big\{ I^\varepsilon_{[-T,0]} (u) :
u  \in C([-T,0];M) \,,\, u(-T)=\upbar{\rho}_\varepsilon \, ,\, u(0) =\rho
\big\}\;, 
\end{equation}
so that $V_\varepsilon(\rho)$ measures the minimal cost to reach the
function $\rho$ starting from the stationary solution
$\upbar{\rho}_\varepsilon$. In this sense, while
$I^\varepsilon_{[-T,0]}(u)$ measures how much a path $u$ is close to
solutions to \eqref{eq:1}, the quasi-potential $V_\varepsilon (\rho)$
measures how much $\rho$ is close to the stationary solution
$\upbar{\rho}_\varepsilon$.

The previous definition implies that $V_\varepsilon$ is a
\emph{Lyapunov functional} for the Burgers equation \eqref{eq:1}. This
is to say that if $u(t;\rho)$, $t\ge 0$, is the solution to \eqref{eq:1}
with initial datum $u(0,\cdot;\rho)=\rho(\cdot)\in M$ then for any
$t\ge 0$ we have $V_\varepsilon \big(u(t;\rho)\big) \le V_\varepsilon
(\rho)$.  Observe that for each $t\ge 0$ we have $u(t;\rho)\in M$ by
the maximum principle.  The previous claim is easily proven, recalling
that $I^\varepsilon_{[-T,0]}$ vanishes on weak solutions to
\eqref{eq:1}, by exhibiting a test path for the variational problem
\eqref{qp-1}.

It is convenient to formulate the variational problem \eqref{qp-1}
also on paths defined on the semi-infinite time interval
$(-\infty,0]$. To this end we introduce the set
\begin{equation}
\label{Ue}
\mc U(\upbar{\rho}_\varepsilon) := \big\{ u\in C((-\infty,0];M)\,:\:
\lim_{t\to-\infty} u(t)= \upbar{\rho}_\varepsilon \big\}
\end{equation}
equipped with the topology of the uniform convergence. 
The family of action functionals $I^\varepsilon_{[-T,0]}$, $T>0$,
naturally induces the lower semicontinuous functional $I^\varepsilon :
\, \mc U (\upbar{\rho}_\varepsilon) \to [0,+\infty]$ defined by
\begin{equation}
\label{f03}
I^\varepsilon (u) := \lim_{T\to\infty} 
I_{[-T,0]}^\varepsilon \big( u\! \restriction_{[-T,0]} \big)\;,  
\end{equation}
where $u\! \restriction_{[-T,0]}$ denotes the restriction of $u\in \mc
U(\upbar{\rho}_\varepsilon)$ to $C([-T,0];M)$. Note that the above
limit always exists, possibly equal to $+\infty$, in view of the
monotonicity of $I^\varepsilon_{[-T,0]}(u\! \restriction_{[-T,0]})$ in
$T>0$.
Let
\begin{equation}
\label{qp}
\hat V_\varepsilon (\rho) \;:=\; \inf \big\{ I^\varepsilon (u) :
u  \in \mc U (\upbar{\rho}_\varepsilon)  \,,\,  u(0) =\rho
\big\}
\end{equation}
and observe that the inequality $\hat V_\varepsilon\leq V_\varepsilon$
holds trivially.  In the context of diffusion processes in $\bb R^n$,
by the continuity of $\hat V_\varepsilon$, it is easy to show that
$V_\varepsilon = \hat V_\varepsilon$ \cite{FW}.  We shall prove that
this identity also holds in the present setting where, as shown below,
$\hat V_\varepsilon$ is not continuous but only lower semicontinuous.

\subsection*{Characterization of the quasi-potential}

The analysis of the quasi-potential in the case $\rho_0> \rho_1$ and
in the case $\rho_0 < \rho_1$, $\varepsilon \ge \varepsilon_0$ has
been considered in \cite{bgl1}.  Here we discuss the more interesting
case in which $\rho_0< \rho_1$ and $\varepsilon\in (0,\varepsilon_0)$.
The first main result of this article states that the quasi-potential
$V_\varepsilon$ can be expressed in terms of a static variational
problem.

Given $m>0$, denote by $\mc P_{m}([0,1])$ the set of positive Borel
measures on $[0,1]$ with total mass equal to $m$. Recall that
$\varphi_i = \log\big[\rho_i/(1-\rho_i)\big]$, $i=0,1$,
$\varphi_0<\varphi_1$, and set
\begin{equation}
\label{Fa}
\mc F := \big\{ \varphi \,:\: \varphi(x) = \varphi_0 + \mu ([0,x]) 
\textrm{ for some } \mu \in \mc P_{\varphi_1-\varphi_0}([0,1])
\big\}\;.
\end{equation}
Clearly, if $\varphi$ belongs to $\mc F$ then $\varphi$ is an
increasing c\`adl\`ag function satisfying $\varphi_0 \le \varphi(0)$,
$\varphi(1) = \varphi_1$.  We consider the set $\mc F$ equipped with
the topology inherited from the weak convergence of measures, namely 
a sequence $\{\varphi^n\} \subset \mc F$ converges to
$\varphi$ in $\mc F$ if and only if for any continuous function $g$ on
$[0,1]$ we have $\int \! g \, d\varphi^n \to 
\int\! g \, d\varphi $. Then $\mc F$ is a compact Polish space.
Moreover, $\varphi^n\to \varphi$ in $\mc F$ implies $\varphi^n\to
\varphi$ a.e.

Let $s:\bb R \to (-\infty,+\infty]$ be the convex function defined
by 
\begin{equation}
  \label{s=}
  s(a) :=
  \begin{cases}
    a\log a +(1-a)\log(1-a) & \textrm{ if $a\in[0,1]$ }  \\
    +\infty                 & \textrm{ otherwise}  
  \end{cases}
\end{equation}
and observe that for $a\in (0,1)$ we have $s''(a) \, \sigma(a) =1$.
Let $\mc G_\varepsilon : M \times \mc F \to (-\infty,+\infty] $ be the
functional defined by
\begin{equation}
\label{Grf}
\mc G_\varepsilon  (\rho,\varphi) =
\int_{0}^1 \Big[ s(\rho) + s(\varepsilon \varphi_x)  
+(1-\rho)\varphi -\log\big(1+e^{\varphi}\big)  \Big] \, dx\;,
\end{equation}
where we understand that $\mc G_\varepsilon (\rho,\varphi) = +\infty$ unless
the measure $d\phi$ is absolutely continuous with respect to the Lebesgue
measure and its density, denoted by $\varphi_x$, is bounded by
$\epsilon^{-1}$. By the convexity of $s$, the functional $\mc
G_\epsilon$ is lower semicontinuous.

We shall connect the quasi-potential $V_\epsilon(\rho)$ to the minimum
of the functional $\mc G_\epsilon(\rho,\cdot)$ over $\mc F$.  Fix
$\rho$ in $M$ and consider the Euler-Lagrange equation associated to
the functional $\mc G_\varepsilon (\rho,\cdot)$
\begin{equation}
\label{Deq}
\left\{
\begin{aligned}
& \frac{\varepsilon \varphi_{xx}}{\varphi_x (1-\varepsilon\varphi_x)}
\;-\; \frac 1{1+ e^{\varphi}}  + \rho \;=\; 0 \;, \\
& \varphi(0) = \varphi_0\;, \quad \varphi(1) = \varphi_1\; . 
\end{aligned}
\right.
\end{equation}
Since this equation is not really meaningful for $\varphi\in\mc F$, we
formulate it as a fixed point condition for a suitable operator.
Denote by $K_{\rho,\varepsilon} = K(\varepsilon, \varphi_0, \varphi_1,
\rho) \, : \mc F \to \mc F$ the integral operator
\begin{equation}
\label{g03}
(K_{\rho,\varepsilon} \, \varphi)(x) \;=\; 
\varphi_0 \;+\; \frac 1\varepsilon  \int_0^x \frac{ { 
A \, \exp\Big\{ \frac 1\varepsilon \int_0^y
\big[\big( 1 + e^{\varphi} \big)^{-1} - \rho \big] \, dz \Big\} }}
{ 1 +  A \, \exp\Big\{ \frac 1\varepsilon \int_0^y
\big[  \big( 1 + e^{\varphi} \big)^{-1} - \rho \big] \, dz
\Big\} }\; dy \;,
\end{equation}
where $A= A(\epsilon,\rho,\varphi)\in(0,\infty)$ is chosen so that
$(K_{\rho,\varepsilon} \varphi)(1) = \varphi_1$. We prove in
Section~\ref{sec3} that such a choice is always possible and unique. 
Moreover, if $\varphi$ is a fixed point of $K_{\rho,\varepsilon}$, then 
$\varphi$ is in $C^1([0,1])$, it has a Lipschitz derivative
$\varphi_x$ satisfying $0 < \epsilon \varphi_x < 1$, and $\varphi$
solves \eqref{Deq} a.e.
We adopt, in particular, the following terminology.  A function
$\varphi\in\mc F$ is said to be a solution of \eqref{Deq} if it
belongs to $\ms P_\varepsilon(\rho)$, the set of fixed points of
$K_{\rho,\varepsilon}$.

Recall that $\upbar{\rho}_\varepsilon$ is the stationary solution to
\eqref{eq:1}. Let $S^o_\varepsilon$, $S_\varepsilon : M \to \bb R$ be
the functionals defined by
\begin{equation}
\label{ssg}
S^o_\varepsilon (\rho) \; :=\; \inf
\:\big\{ \mc G_\varepsilon(\rho,\varphi)\,,\: \varphi\in \mc F\big\} \;,\quad
S_\varepsilon (\rho) \;=\; S^o_\varepsilon (\rho) \;-\;
S^o_\varepsilon (\upbar{\rho}_\varepsilon) 
\end{equation}
and note that the infimum is achieved by the lower semicontinuity of
$\mc G_\epsilon(\rho,\cdot)$ and by the compactness of $\mc F$.  Given
$\rho\in M$, we denote by ${\ms F}_\epsilon(\rho)\subset \mc F$ the
collection of minimizers for the previous variational problem, i.e.\
\begin{equation}
  \label{Fer}
  {\ms F}_\epsilon(\rho) :=  
  \mathrm{arg \,inf} \: 
  \big\{ \mc G_\epsilon (\rho,\varphi)\,,\: \varphi\in \mc F\big\}
\end{equation}
and observe that ${\ms F}_\epsilon(\rho)$ is a non-empty compact subset
of $\mc F$.

\begin{theorem}
\label{t:1-1}
Fix $\varphi_0<\varphi_1$, $\varepsilon\in (0,\varepsilon_0)$, and
$\rho \in M$. Then any minimizer of $ \mc G_\epsilon (\rho,\cdot)$
solves \eqref{Deq},
i.e.\
\begin{equation*}
  {\ms F}_\epsilon(\rho) \; \subset \; \ms P_\varepsilon(\rho)\;.
\end{equation*}
Moreover, the functional $S_\varepsilon$ is lower semicontinuous on
$M$. Finally, if the sequence $\{\rho^n\}\subset M$ converges to
$\rho$ \emph{strongly} in $L^1([0,1])$ then $S_\varepsilon(\rho^n)\to
S_\epsilon(\rho)$.
\end{theorem}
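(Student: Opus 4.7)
The plan is to handle the three assertions in the order they appear: the Euler--Lagrange analysis first, then the lower semicontinuity of $S_\varepsilon$, and finally the upgrade to strong $L^1$ continuity. Throughout I exploit the compactness of $\mc F$ and the convexity of $s$, together with the boundary behavior $s'(0+)=-\infty$, $s'(1-)=+\infty$ of the entropy density.

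For the first statement, fix $\varphi\in{\ms F}_\epsilon(\rho)$. Since $\mc G_\varepsilon(\rho,\varphi)<+\infty$, the measure $d\varphi$ is absolutely continuous with density $\varphi_x\in[0,\varepsilon^{-1}]$ a.e., and the boundary conditions $\varphi(0)=\varphi_0$, $\varphi(1)=\varphi_1$ are forced. The first sub-step is to show that $0<\varepsilon\varphi_x<1$ a.e.; if, for instance, $A_0:=\{x:\varphi_x=0\}$ had positive Lebesgue measure, then transporting an infinitesimal amount of mass of $d\varphi$ from a point where $\varepsilon\varphi_x\in(0,1)$ into $A_0$ would strictly decrease $\int_0^1 s(\varepsilon\varphi_x)\,dx$ by an amount of order $|s'(0+)|$ (which is infinite) while producing only an $O(1)$ change in the other terms; this contradicts minimality. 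The analogous argument rules out $\{x:\varepsilon\varphi_x=1\}$. Once $\varepsilon\varphi_x$ stays strictly inside $(0,1)$ almost everywhere, we may perform arbitrary smooth perturbations $\varphi+th$ with $h\in C^1_0([0,1])$ and $t$ small, yielding the distributional identity
\begin{equation*}
-\varepsilon\,\frac{d}{dx}\,s'(\varepsilon\varphi_x) \;+\; \frac{1}{1+e^{\varphi}}\;-\;\rho\;=\;0\;.
\end{equation*}
Integrating once identifies $\log\frac{\varepsilon\varphi_x}{1-\varepsilon\varphi_x}$ with $\log A+\varepsilon^{-1}\int_0^x\!\bigl[(1+e^\varphi)^{-1}-\rho\bigr]dz$ for a constant $A>0$, solving for $\varphi_x$ and integrating again gives exactly $\varphi=K_{\rho,\varepsilon}\varphi$; the boundary condition $\varphi(1)=\varphi_1$ fixes $A$ uniquely, so $\varphi\in\ms P_\varepsilon(\rho)$.

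For lower semicontinuity of $S_\varepsilon$, consider $\rho^n\to\rho$ in $M$ and choose minimizers $\varphi^n\in{\ms F}_\epsilon(\rho^n)$. By compactness of $\mc F$ extract $\varphi^n\to\varphi^*$ along a subsequence; as noted in the excerpt this convergence also holds a.e., and the $\varphi^n$ are uniformly bounded by $\varphi_1$. We then verify lower semicontinuity term by term in
\begin{equation*}
\mc G_\varepsilon(\rho^n,\varphi^n)=\int_0^1\bigl[s(\rho^n)+s(\varepsilon\varphi^n_x)+(1-\rho^n)\varphi^n-\log(1+e^{\varphi^n})\bigr]dx\;.
\end{equation*}
Weak$^*$ convergence of $\rho^n$ gives $\liminf\!\int s(\rho^n)\geq\int s(\rho)$ by convexity of $s$; the convex integral functional $\varphi\mapsto\int s(\varepsilon\varphi_x)\,dx$ is weakly lower semicontinuous on $\mc F$, so $\liminf\!\int s(\varepsilon\varphi^n_x)\geq\int s(\varepsilon\varphi^*_x)$; dominated convergence handles $\int\log(1+e^{\varphi^n})$, and the product $\int(1-\rho^n)\varphi^n$ converges because $\varphi^n\to\varphi^*$ strongly in $L^1$ (bounded + a.e.) while $\rho^n\to\rho$ weakly$^*$ in $L^\infty$. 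Hence $\liminf S^o_\varepsilon(\rho^n)\geq\mc G_\varepsilon(\rho,\varphi^*)\geq S^o_\varepsilon(\rho)$, and subtracting $S^o_\varepsilon(\upbar\rho_\varepsilon)$ yields the lower semicontinuity of $S_\varepsilon$.

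For the strong $L^1$ statement it suffices to prove the matching $\limsup$. Pick any $\varphi^*\in{\ms F}_\epsilon(\rho)$ and use it as a trial function for $\rho^n$: $S^o_\varepsilon(\rho^n)\le\mc G_\varepsilon(\rho^n,\varphi^*)$. Since $\varphi^*$ is fixed, only the terms $\int s(\rho^n)\,dx$ and $\int(1-\rho^n)\varphi^*\,dx$ depend on $n$; strong $L^1$ convergence gives a.e.\ convergence along a subsequence, $s$ is continuous and bounded on $[0,1]$, and $\varphi^*$ is bounded, so dominated convergence yields $\mc G_\varepsilon(\rho^n,\varphi^*)\to\mc G_\varepsilon(\rho,\varphi^*)=S^o_\varepsilon(\rho)$, and the limit is independent of the subsequence, giving the claim.

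The principal obstacle is the first sub-step of Part 1, namely excluding the degenerate behavior $\varepsilon\varphi_x\in\{0,1\}$ on sets of positive measure. The admissible set $\mc F$ has hard monotonicity and $L^\infty$ constraints on $\varphi_x$, so naive smooth perturbations may leave $\mc F$; one must construct explicit admissible variations that respect $\varphi_x\ge0$ and preserve the mass $\varphi_1-\varphi_0$, and quantify the infinite gain coming from $s'(0+)=-\infty$ and $s'(1-)=+\infty$ against the bounded cost from the remaining integrand. Everything else (integration by parts, identification with the fixed-point equation, convexity-based semicontinuity) is essentially mechanical once this interior regularity of minimizers is in hand.
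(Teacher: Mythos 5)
Your arguments for the lower semicontinuity of $S_\varepsilon$ and for the strong-$L^1$ continuity are essentially sound, and they take a slightly more direct route than the paper, which instead factors $S^o_\varepsilon = \ms S - \Lambda^*$ and establishes continuity of the Legendre transform $\Lambda^*$ via a compactness argument (Lemma~\ref{t:cont}). Both routes work; yours trades the abstract duality for a term-by-term passage to the limit, which is fine.

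The first part, however, has a genuine gap, and it is precisely the one the paper flags as the reason they abandon this approach. Your mass-transport variation only excludes that the sets $\{\varphi_x=0\}$ and $\{\varepsilon\varphi_x=1\}$ have \emph{positive measure}; it does not give that $\varepsilon\varphi_x$ is essentially bounded away from $0$ and $1$. But the sentence ``Once $\varepsilon\varphi_x$ stays strictly inside $(0,1)$ almost everywhere, we may perform arbitrary smooth perturbations $\varphi+th$ with $h\in C^1_0([0,1])$'' is false under that weaker conclusion: $\mc F$ encodes the hard one-sided constraint $\varphi_x\ge 0$ (and $\varepsilon\varphi_x\le 1$), and if $\operatorname{ess\,inf}\varphi_x=0$ then for any $h$ with $h_x$ changing sign the perturbed function $\varphi+th$ exits $\mc F$ for every $t\neq 0$, no matter how small. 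Concretely, take $\varphi_x(x)=cx$ for a suitable constant $c$: then $\varphi_x>0$ a.e.\ but no two-sided variation with $h_x<0$ near $x=0$ is admissible. So the passage from the pointwise interior condition to the Euler--Lagrange equation is unjustified, and this is the crux of the proof. The paper obtains the needed uniform bound $\delta\le\varepsilon\varphi_x\le 1-\delta$ by an entirely different, indirect route: first a gradient-flow argument (Lemma~\ref{s02}) gives the inclusion ${\ms F}_\epsilon(\rho)\subset\ms P_\epsilon(\rho)$ for \emph{smooth} $\rho$ using parabolic maximum principles; then Lemma~\ref{t:int} is proved for general $\rho$ by combining the convex-duality description $\partial\Lambda^*(\rho)=\mathrm{co\:}{\ms F}_\epsilon(\rho)$ (Proposition~\ref{t:DL*}) with the Lanford--Robinson theorem on approximation by unique tangent functionals (Theorem~\ref{t:lr}) and the lower semicontinuity of $\psi\mapsto|\psi_x|_{L^\infty}$ under weak* limits. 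You would need to supply the analogous uniform a.e.\ bound before any smooth variation argument can start; the paper's own remark preceding Lemma~\ref{t:int} reports that direct perturbative attempts of the kind you sketch were not successful.
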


The connection between the quasi-potential and the functional
\eqref{ssg} is established by the following theorem, which is the main
result of this paper.

\begin{theorem}
\label{t:S=V}
For each $\rho_0<\rho_1$ and $\varepsilon \in (0,\varepsilon_0)$
we have $V_\varepsilon=\hat V_\epsilon = S_\varepsilon$.
\end{theorem}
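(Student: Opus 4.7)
The plan is to prove $S_\varepsilon \leq \hat V_\varepsilon$ and $V_\varepsilon \leq S_\varepsilon$ separately; combined with the trivial inequality $\hat V_\varepsilon \leq V_\varepsilon$ noted after \eqref{qp}, these will give the three-way chain $V_\varepsilon \leq S_\varepsilon \leq \hat V_\varepsilon \leq V_\varepsilon$ and hence the claimed equality.

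For the lower bound $S_\varepsilon \leq \hat V_\varepsilon$, fix $u \in \mc U(\upbar{\rho}_\varepsilon)$ with $u(0) = \rho$ and $I^\varepsilon(u) < +\infty$. By Theorem~\ref{s01}, $u$ satisfies $u_t + f(u)_x = \varepsilon u_{xx} - 2\varepsilon(\sigma(u) H_x)_x$ for a unique $H \in \mf H^1_0(\sigma(u))$, with $I^\varepsilon_{[-T,0]}(u) = \varepsilon \Vert H \Vert^2_{1,\sigma(u)}$. I would then establish the Hamilton--Jacobi inequality
\begin{equation*}
S_\varepsilon(u(0)) - S_\varepsilon(u(-T)) \;\leq\; I^\varepsilon_{[-T,0]}(u)
\end{equation*}
by showing that $S_\varepsilon$ is a subsolution of $\bb H_\varepsilon(\rho, D S_\varepsilon) = 0$, where $\bb H_\varepsilon$ is the Hamiltonian associated to the action \eqref{f14}. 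The key ingredient is the envelope formula $\delta S_\varepsilon/\delta \rho = s'(\rho) - \varphi$ valid for any $\varphi \in \ms F_\varepsilon(\rho)$; since by Theorem~\ref{t:1-1} this $\varphi$ solves the Euler--Lagrange equation \eqref{Deq}, substitution into $\bb H_\varepsilon$ yields the subsolution property, and a Cauchy--Schwarz estimate against the representation of $u$ from Theorem~\ref{s01} produces the displayed inequality. Sending $T \to \infty$, the convergence $u(-T) \to \upbar{\rho}_\varepsilon$ in $M$ implies convergence in $L^1([0,1])$ (by boundedness), so the $L^1$-continuity of $S_\varepsilon$ in Theorem~\ref{t:1-1} gives $S_\varepsilon(u(-T)) \to 0$, whence $S_\varepsilon(\rho) \leq I^\varepsilon(u)$.

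For the upper bound $V_\varepsilon \leq S_\varepsilon$, given $\rho \in M$ with $S_\varepsilon(\rho) < +\infty$ and a chosen $\varphi \in \ms F_\varepsilon(\rho)$, I would construct an adjoint path $u^\star$ along which the Cauchy--Schwarz step above becomes an equality. Heuristically, one selects a measurable family of minimizers $\Phi(u^\star(t)) \in \ms F_\varepsilon(u^\star(t))$, sets $H = s'(u^\star) - \Phi(u^\star)$, and solves
\begin{equation*}
u^\star_t + f(u^\star)_x = \varepsilon u^\star_{xx} - 2\varepsilon(\sigma(u^\star) H_x)_x
\end{equation*}
backward in time with terminal condition $u^\star(0) = \rho$, verifying $u^\star(t) \to \upbar{\rho}_\varepsilon$ as $t \to -\infty$. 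Truncating at $t = -T$ and gluing with a short prefix obtained by relaxing \eqref{eq:1} from $\upbar{\rho}_\varepsilon$ toward $u^\star(-T)$ (using the global attractivity recalled below \eqref{eq:1}) produces an admissible competitor for $V_\varepsilon(\rho)$ of action at most $S_\varepsilon(\rho) + o(1)$ as $T \to \infty$.

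The main obstacle I anticipate is the non-uniqueness of minimizers in $\ms F_\varepsilon(\rho)$, which is the very phenomenon the paper is about: along the trajectory $u^\star(t)$, a measurable selection $\Phi(u^\star(t))$ of minimizers need not be continuous, so $H$ is only measurable in time and $u^\star$ must be interpreted as a weak solution. In addition, the low regularity of $\varphi \in \mc F$ (only c\`adl\`ag, with $0 \leq \varepsilon \varphi_x \leq 1$) and of $\rho \in L^\infty$ (no pointwise boundary values) forces the envelope and Hamilton--Jacobi identities to be carried out first on smooth regularizations bounded strictly away from $0$ and $1$, and then transferred back to the original data by a limiting argument that depends crucially on the lower semicontinuity and $L^1$-continuity of $S_\varepsilon$ provided by Theorem~\ref{t:1-1}.
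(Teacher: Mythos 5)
Your decomposition $V_\varepsilon \leq S_\varepsilon \leq \hat V_\varepsilon \leq V_\varepsilon$ is logically sound, but the proposal has a concrete error and several gaps that do not merely postpone difficulties but rather omit the mechanisms that make the argument close.

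First, in the lower bound $S_\varepsilon \leq \hat V_\varepsilon$, the claim that ``$u(-T)\to\upbar\rho_\varepsilon$ in $M$ implies convergence in $L^1$ (by boundedness)'' is false: $M$ carries the weak* topology of $L^\infty$, and a uniformly bounded sequence converging weak* need not converge strongly in $L^1$ (think of $\sin(nx)$). What actually lets you control $S_\varepsilon(u(-T))$ is that finite action forces strong convergence along a subsequence; this is the content of the paper's Lemma~\ref{s06} (built on the energy estimate Lemma~\ref{s04}), which uses $I^\varepsilon(u)<\infty$ to show $|u(t)-\upbar\rho_\varepsilon|_{L^2}\to 0$. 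Without such an ingredient your argument stalls. Separately, the subsolution substitution $\bb H_\varepsilon(u(t),s'(u(t))-\varphi(t))=0$ requires a choice $\varphi(t)\in\ms F_\varepsilon(u(t))$ that is at least measurable in $t$, and the envelope ``derivative'' $s'(\rho)-\varphi$ exists only as an element of a (possibly multi-valued) super-differential when $\ms F_\varepsilon(u(t))$ is not a singleton. The paper circumvents both regularity and non-uniqueness issues by discretizing the time interval, picking $\varphi^k\in\ms F_\varepsilon(u(-T_k))$ at mesh points, and exploiting the pointwise \emph{optimality} of $\varphi^k$ (the inequality $\mc G_\varepsilon(u(-T_{k-1}),\varphi^k)\ge\mc G_\varepsilon(u(-T_{k-1}),\varphi^{k-1})$) to produce a telescoping bound; this comparison of values, rather than a derivative identity, is the essential step your sketch is missing.

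Second, in the upper bound, the proposal to choose a measurable selection $\Phi(u^\star(t))\in\ms F_\varepsilon(u^\star(t))$ along the trajectory and solve a controlled equation with $H=s'(u^\star)-\Phi(u^\star)$ is not what the paper does and is likely not viable as stated: the resulting control may jump discontinuously and the evolution is not well posed. The paper instead fixes \emph{one} $\varphi\in\ms F_\varepsilon(\rho)$ at $t=0$, solves the viscous Burgers equation forward with initial data $e^\varphi/(1+e^\varphi)$, and defines the test path by time reversal via formula \eqref{r*}. The key structural input (Lemmata~\ref{s10}, \ref{s11}) is the invariance of the critical set $\Sigma$ under the Hamiltonian flow: intermediate states $\psi(t)$ belong to $\ms P_\varepsilon(v(t))$ automatically, and they need not be \emph{minimizers}; the action is then computed by the symplectic identity of Lemma~\ref{t:dotG}. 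Your construction also relies on ``gluing with a short prefix obtained by relaxing \eqref{eq:1} from $\upbar\rho_\varepsilon$'', which cannot work as described: $\upbar\rho_\varepsilon$ is stationary, so running \eqref{eq:1} from it produces nothing. One must instead prove a quantitative estimate, as in Lemma~\ref{s14}, showing that $\upbar\rho_\varepsilon$ can be connected to any $\rho$ with small $\mc H^1_0$-distance at cost $O(|\rho-\upbar\rho_\varepsilon|^2_{\mc H^1_0})$; this, together with the $\mc H^1_0$ control from Lemma~\ref{s06}, is precisely what makes the finite-time variational problem and the infinite-time one agree.
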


We remark that while the identity $V_\varepsilon=\hat V_\epsilon$
holds under general conditions, the characterization of the
quasi-potential in terms of the static variational problem \eqref{ssg}
depends crucially on the specific form of the flux $f$ and the
mobility $\sigma$ namely, $f(a)=\sigma(a)=a(1-a)$, $a\in [0,1]$. 

In the proof of Theorem \ref{t:S=V} we actually describe some optimal
paths for the variational problem \eqref{qp}.  Fix $\rho$ in $M$, let
$\varphi\in{\ms F}_\epsilon(\rho)$, and denote by $F=F(t,x)$,
$(t,x)\in [0,+\infty)\times [0,1]$ the solution of the viscous Burgers
equation \eqref{eq:1} with initial condition $e^{\varphi}/
(1+e^{\varphi})$.  Set $\psi= s'(F)$ and define $v=v(t,x)$ by
\begin{equation*}
v \;=\; \frac 1{1+ e^{\psi}}  
\;-\; \frac{\varepsilon \psi_{xx}}
{\psi_x (1-\varepsilon\psi_x)} \;\cdot
\end{equation*}
We prove that an optimal path $u$ for the variational problem
\eqref{qp} is the path $v$ reversed in time, i.e.\ $u(t)=v(-t)$.  This
construction shows that to each $\varphi\in {\ms F}_\epsilon(\rho)$
there is associated a path $u\in \mc U(\upbar{\rho}_\epsilon)$ which
is a minimizer for the variational problem \eqref{qp}. If ${\ms
  F}_\epsilon(\rho)$ is not a singleton, to different elements in
${\ms F}_\epsilon(\rho)$ are associated different minimizers for
\eqref{qp} and there is no uniqueness of the minimizer for \eqref{qp}.
Unfortunately, we are not able to prove that any minimizer of
\eqref{qp} can be obtained from the previous construction; in
particular we cannot deduce uniqueness of the minimizer for \eqref{qp}
from the uniqueness of the minimizer for $\mc G_\epsilon(\rho,\cdot)$.
We refer however to the heuristic argument presented below, in the
context of the Hamiltonian formalism, which suggests that the minimizers
for \eqref{qp} are indeed in a one-to-one correspondence with ${\ms
  F}_\epsilon(\rho)$.

Theorem~\ref{t:S=V} implies that the minimum of $S_\varepsilon^o$ is 
achieved at $\upbar{\rho}_\epsilon$, equivalently that $S_\epsilon$ is 
a positive functional. 
This can also be shown by direct computations.  Indeed it is enough
to observe that, for a fixed $\varphi\in\mc F$, the strict convexity
of the map $\rho\mapsto \mc G_\varepsilon(\rho,\varphi)$ implies that
the infimum over $\rho$ is uniquely attained for $\varphi=s'(\rho)$.
A straightforward computation then shows that the functional $\varphi
\mapsto \mc G_\varepsilon({s'}^{-1}(\varphi),\varphi)$ has a unique
critical point, which is a global minimum, achieved at
$s'(\upbar{\rho}_\varepsilon)$.

\subsection*{Uniqueness / non uniqueness of optimal paths}

The connection between the quasi-potential $V_\epsilon$ and a
``trial'' functional like $\mc G_\epsilon$ has been established for
other action functionals arising as large deviation rate functional
for few microscopic stochastic dynamics in the diffusive scaling limit
\cite{bdgjl3,bgl1,d1,de}.  In contrast with all other cases, the
functional $\mc G_\varepsilon ( \rho, \cdot)$ is neither concave nor
convex and might have more than a single critical point.  Fix $\rho\in
M$.  The construction presented above actually shows that to each
critical point $\varphi$ for $\mc G_\epsilon(\rho,\cdot)$ there is
associated a path $u\in \mc U(\upbar{\rho}_\epsilon)$ which is a
critical point for the variational problem \eqref{qp}.  It is
therefore natural to investigate whether the sets $\ms
P_\epsilon(\rho)$ and ${\ms F}_\epsilon(\rho)$ are singletons.

In this direction, our first result shows that - under suitable
conditions - there exists a unique critical point for $\mc
G_\epsilon(\rho,\cdot)$.  

\begin{theorem}
\label{t:unmine}The following statements hold.
\begin{itemize}
\item[(i)] Fix $\varphi_0<\varphi_1$. There exists
  $\varepsilon_1\in(0,\varepsilon_0)$ such that for any
  $\varepsilon\in (\varepsilon_1,\varepsilon_0)$ the set $\ms
  P_\epsilon(\rho)$ is a singleton for any $\rho\in M$.
\item[(ii)] Fix $\epsilon>0$. There exists $\delta\in
  (0,\epsilon^{-1})$ such that for any $0<\varphi_1-\varphi_0 <\delta$
  the set $\ms P_\epsilon(\rho)$ is a singleton for any $\rho\in M$.
\item[(iii)] Fix $\varphi_0<\varphi_1$ and $\epsilon\in
  (0,\epsilon_0)$. If $\rho\in M$ is in $C^1([0,1])$ and strictly
  increasing then the set $\ms P_\epsilon(\rho)$ is a singleton.
\end{itemize}
\end{theorem}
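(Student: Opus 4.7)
The plan is to prove uniqueness of fixed points of $K_{\rho,\varepsilon}$ in $\mc F$ via strict convexity of $\mc G_\varepsilon(\rho,\cdot)$ along every segment connecting two putative fixed points; since elements of $\ms P_\varepsilon(\rho)$ are critical points of $\mc G_\varepsilon(\rho,\cdot)$ (by the Euler--Lagrange derivation), this forces any two such critical points to coincide. Computing the second variation at $\varphi$ with $0<\varepsilon\varphi_x<1$ along $h\in\mc H^1_0$ gives
\begin{equation*}
D^2\mc G_\varepsilon(\rho,\varphi)[h,h]
\;=\; \int_0^1\frac{\varepsilon^2 h_x^2}{\sigma(\varepsilon\varphi_x)}\,dx
\;-\; \int_0^1 \sigma\Big(\frac{1}{1+e^{-\varphi}}\Big)h^2\,dx,
\end{equation*}
and the potential term is bounded by $(1/4)\int h^2\,dx$. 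By the Poincar\'e inequality the kinetic term dominates as soon as $\sigma(\varepsilon\varphi_x)$ is uniformly small, and this smallness is preserved under convex combinations.

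\textbf{Parts (i) and (ii).} In each case I would use the constraint $\int_0^1\varepsilon\varphi_x\,dx=\varepsilon(\varphi_1-\varphi_0)$, satisfied by any fixed point, together with the explicit logistic form \eqref{g03}, to derive uniform smallness of $\sigma(\varepsilon\varphi_x)$. For (ii), with $\varepsilon$ fixed and $\delta:=\varphi_1-\varphi_0\to 0$, the integral tends to zero, forcing the normalization $A$ to be small; hence $\varepsilon\varphi_x\le Ae^{1/\varepsilon}$ is uniformly $O(\delta)$ and $\sigma(\varepsilon\varphi_x)\le C(\varepsilon)\delta$. For (i), with $\delta$ fixed and $\varepsilon\to\varepsilon_0=1/\delta$, the integral $\int_0^1(1-\varepsilon\varphi_x)\,dx=1-\varepsilon\delta\to 0$ forces $A$ to be large; formula \eqref{g03} then gives $1-\varepsilon\varphi_x\le A^{-1}e^{1/\varepsilon}$ uniformly, whence $\sigma(\varepsilon\varphi_x)\le 1-\varepsilon\varphi_x$ is uniformly small. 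In both cases strict convexity of $\mc G_\varepsilon(\rho,\cdot)$ along the segment joining two fixed points, combined with the fact that both endpoints are critical points, forces the endpoints to coincide.

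\textbf{Part (iii).} Here no smallness parameter is available, and I would switch to a shooting argument. Rewriting \eqref{Deq} as the first-order system
\begin{equation*}
\varphi_x\;=\;\frac{e^\psi}{\varepsilon(1+e^\psi)}\,,\qquad
\psi_x\;=\;\frac{1}{\varepsilon}\Big[\frac{1}{1+e^\varphi}-\rho(x)\Big],\qquad
\varphi(0)=\varphi_0,
\end{equation*}
the IVP (well-posed since $\rho\in C^1$) can be parameterized by $A:=e^{\psi(0)}$, so that $\ms P_\varepsilon(\rho)=\{\varphi_A:\varphi_A(1)=\varphi_1\}$. Uniqueness reduces to strict monotonicity of $A\mapsto\varphi_A(1)$, equivalently to $v(1)\ne 0$ for the Jacobi field $(v,w)=(\partial_A\varphi,\partial_A\psi)$ with $v(0)=0$. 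The Jacobi equation can be cast in self-adjoint Sturm--Liouville form $\big((1/\sigma_1)v_x\big)_x+\sigma_2 v=0$, where $\sigma_1,\sigma_2>0$ depend on $(\varphi,\psi)$, and $\rho_x>0$ should enter through a Sturm-comparison argument excluding interior zeros of $v$ on $(0,1]$.

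\textbf{Main obstacle.} Part (iii) is the hardest: the coefficients of the Jacobi equation depend non-monotonically on $x$ through $(\varphi,\psi)$, so a crude constant-coefficient Sturm comparison only covers the regime $\varepsilon>1/(4\pi)$. The sharp result requires genuinely exploiting $\rho_x>0$ to preclude oscillations of $v$, for instance by comparing the phase evolution of $(v,w)$ with the phase evolution of the nearby family of shooting solutions, and identifying this mechanism is the technical core of the proof.
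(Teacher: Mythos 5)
Your treatment of parts (i) and (ii) is a genuinely different route from the paper's, and it looks sound: the paper instead introduces, for each case, an auxiliary integro-differential operator on the set $\tilde{\mc F}$ (obtained by multiplying \eqref{Deq} by $\varphi_x$, respectively by $1-\epsilon\varphi_x$, and integrating), and shows that this auxiliary operator is a contraction with respect to the $\sup_x|\varphi_x-\psi_x|$ metric when $\epsilon_0-\epsilon$ (resp.\ $\varphi_1-\varphi_0$) is small; uniqueness of the fixed point then follows from the Banach fixed point theorem. Your approach -- extract from \eqref{g03} uniform pointwise smallness of $\sigma(\epsilon\varphi_x)$ over all fixed points, observe this smallness persists for convex combinations, and then get strict convexity of $\mc G_\epsilon(\rho,\cdot)$ on segments between fixed points via the Poincar\'e inequality -- is more in the spirit of the paper's treatment of (iii) and has the merit of being unified; the contraction argument, on the other hand, directly yields uniqueness in $\ms P_\epsilon(\rho)$ rather than having to go through the critical-point reformulation. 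Both are fine.

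Part (iii) has a genuine gap, as you yourself flag. You correctly reduce the question to showing that the second variation (equivalently, the Jacobi operator of your shooting problem) has no conjugate point in $(0,1]$, and correctly identify that $\rho_x>0$ must be the mechanism, but you do not supply the mechanism. The paper's device is the substitution $h=\psi\,g$ with $\psi:=\varphi_x$, which you do not consider. Since $\varphi$ solves \eqref{Deq}, differentiating gives
\begin{equation*}
\Big(\frac{\epsilon\psi_x}{\varphi_x(1-\epsilon\varphi_x)}\Big)_x
+\frac{e^\varphi\,\psi}{(1+e^\varphi)^2}=-\rho_x\,,
\end{equation*}
and after the substitution $h=\psi g$ (legal because $\psi$ is smooth, strictly positive, and $g$ inherits the zero boundary conditions from $h$) the cross term in the expanded kinetic part integrates by parts against this identity; all the indefinite terms cancel and one is left with
\begin{equation*}
\int_0^1\Big[\frac{\epsilon\psi^2}{\varphi_x(1-\epsilon\varphi_x)}\,g_x^2
+\psi\,\rho_x\,g^2\Big]\,dx\,,
\end{equation*}
which is manifestly a uniformly positive quadratic form precisely because $\psi>0$ and $\rho_x>0$. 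In your language, this is the phase-comparison you were looking for: the Riccati/Pr\"ufer variable built from $\psi$ never crosses zero, so the Jacobi field $v$ is disconjugate on $[0,1]$, regardless of $\epsilon$. Without this substitution (or an equivalent disconjugacy certificate built from $\varphi_x$), your argument indeed only covers $\epsilon>1/(4\pi)$.
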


We remark that while the first two results are based on a standard
perturbation argument and are quite natural from a statistical
mechanics point of view, the third one somehow depends on the global
structure of the functional $\mc G_\epsilon$.  Recalling that the
stationary solution $\upbar{\rho}_\epsilon$ is smooth and strictly
increasing, the third statement implies that the functional $\mc
G_\varepsilon (\rho,\cdot)$ has a unique critical point when $\rho$
lies in a $C^1$-neighborhood of $\upbar{\rho}_\epsilon$.

As discussed above, a most striking feature of the model here examined
is that there can be more than a single minimizer for $\mc
G_\varepsilon(\rho,\cdot)$.  The next result states that this
phenomenon does indeed occur.

\begin{theorem}
\label{t:degmine}
Fix $\varphi_0<\varphi_1$. There exists $\varepsilon_2\in
(0,\epsilon_0)$ such that the following statement holds.  For each
$\epsilon\in (0,\epsilon_2)$ there exist functions $\rho\in M$ such
that ${\ms F}_\epsilon(\rho)$ is not a singleton.
\end{theorem}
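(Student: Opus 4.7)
My plan is a perturbation argument from the inviscid limit $\varepsilon\downarrow 0$, split into three logical steps: identify the limit functional and its minimizers, construct a profile $\rho^\sharp$ with non-unique minimizers at $\varepsilon=0$, and transfer the non-uniqueness to small $\varepsilon$ by an intermediate-value argument.

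First, the $\Gamma$-limit of $\mc G_\varepsilon(\rho,\cdot)$ on $\mc F$ is
\[
\mc G_0(\rho,\varphi)\;=\;\int_0^1\!\bigl[\,s(\rho)+(1-\rho)\varphi-\log(1+e^\varphi)\,\bigr]\,dx,
\]
the functional of \cite{dls3}: the $\Gamma$-$\liminf$ is immediate from $s(\varepsilon\varphi_x)\ge 0$, and the $\Gamma$-$\limsup$ is obtained by smoothing any jump of $\varphi$ of size $h$ over an interval of length $\varepsilon h$ on which $\varepsilon\varphi_x=1$, at no extra cost since $s(0)=s(1)=0$. The key structural fact is that $\mc G_0(\rho,\cdot)$ is \emph{strictly concave} in $\varphi$ (the term $-\log(1+e^\varphi)$ is strictly concave and the remainder is affine in $\varphi$). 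On the compact convex set $\mc F$ its minimum is therefore attained only at extreme points, which are exactly the step functions $\varphi_{x_0}(x):=\varphi_0+(\varphi_1-\varphi_0)\mathbf 1_{[x_0,1]}(x)$, $x_0\in[0,1]$. A direct computation reduces the problem to the one-dimensional minimization of
\[
g(x_0):=\mc G_0(\rho,\varphi_{x_0})\;=\;C(\rho)+(\varphi_1-\varphi_0)\!\int_0^{x_0}\![\rho(x)-(1-a)]\,dx,
\]
with $a:=\log[(1+e^{\varphi_1})/(1+e^{\varphi_0})]/(\varphi_1-\varphi_0)\in(0,1)$; thus $g'(x_0)=(\varphi_1-\varphi_0)[\rho(x_0)-(1-a)]$ and local minima occur precisely at the increasing crossings of the level $1-a$.

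To exhibit two distinct global minima at $\varepsilon=0$ I would take $\rho^\sharp\in M$ of (possibly mollified) piecewise constant ``low--high--low--high'' shape with two increasing crossings $x_0^{(1)}<x_0^{(2)}$ of the level $1-a$, and tune the areas so that $\int_{x_0^{(1)}}^{x_0^{(2)}}[\rho^\sharp-(1-a)]\,dx=0$ (forcing $g(x_0^{(1)})=g(x_0^{(2)})$) and so that the rightmost piece sits above $1-a$ (keeping the global minimum in the interior). This produces two distinct minimizers $\varphi_0^{(k)}:=\varphi_{x_0^{(k)}}$, $k=1,2$, of $\mc G_0(\rho^\sharp,\cdot)$. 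For the transfer to $\varepsilon>0$, the recovery sequences supply competitors $\tilde\varphi_\varepsilon^{(k)}$ with $\mc G_\varepsilon(\rho^\sharp,\tilde\varphi_\varepsilon^{(k)})\to\mc G_0(\rho^\sharp,\varphi_0^{(k)})$; combined with the $\Gamma$-$\liminf$ inequality and compactness of $\mc F$, the global minimum of $\mc G_\varepsilon(\rho^\sharp,\cdot)$ converges to the common limit value, and the constrained minima on disjoint $\mc F$-neighborhoods $U^{(1)},U^{(2)}$ of $\varphi_0^{(1)},\varphi_0^{(2)}$ both realize this value up to $o(1)$. Compactness further forces any actual minimizer to cluster at one of the $\varphi_0^{(k)}$.

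The main obstacle is that the $O(\varepsilon)$ corrections may break the balance between the two basins, leaving only one basin to host the true global minimum. I would handle this via an intermediate-value argument in a shape parameter $t$ of a continuous family $\rho^\sharp_t$ (e.g.\ the height of the middle step of $\rho^\sharp$). The difference $D_\varepsilon(t):=\min_{U^{(1)}}\mc G_\varepsilon(\rho^\sharp_t,\cdot)-\min_{U^{(2)}}\mc G_\varepsilon(\rho^\sharp_t,\cdot)$ is continuous in $t$ and, at $\varepsilon=0$, changes sign as $t$ crosses the balanced value $t^*$ with generically nonzero slope. For each small $\varepsilon>0$, the uniform closeness $D_\varepsilon(t)\to D_0(t)$ and continuity then yield a $t(\varepsilon)$ near $t^*$ with $D_\varepsilon(t(\varepsilon))=0$; at this value of the parameter every global minimizer of $\mc G_\varepsilon(\rho^\sharp_{t(\varepsilon)},\cdot)$ lying in $U^{(1)}$ is matched by an equally good minimizer in $U^{(2)}$, producing two distinct elements of ${\ms F}_\varepsilon(\rho^\sharp_{t(\varepsilon)})$ as required, so that Theorem~\ref{t:degmine} holds with $\varepsilon_2$ equal to the threshold below which this construction applies.
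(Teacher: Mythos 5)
Your proposal is correct and follows essentially the same route as the paper: identify the $\Gamma$-limit $\mc G$, reduce its minimization over $\mc F$ to a one-dimensional problem over step functions $\varphi^{(y)}$ (Proposition~\ref{t:gin=tgin}), construct a ``low--high--low--high'' profile balanced at $\varepsilon=0$ (Lemma~\ref{t:mina}), and then run an intermediate-value argument in a one-parameter family of nearby profiles to transfer the two-fold degeneracy to small $\varepsilon>0$. The one noteworthy difference is technical: the paper compares the constrained minima over two \emph{closed convex sets} $\mc F^\pm$ whose union is all of $\mc F$, so once they are equal both are automatically global minimizers (and Lemma~\ref{t:cont} gives continuity in the shape parameter), whereas your disjoint neighborhoods $U^{(1)},U^{(2)}$ do not cover $\mc F$ and thus require the extra clustering step you sketch---showing, uniformly in $t$ near $t^*$, that every minimizer of $\mc G_\varepsilon(\rho^\sharp_t,\cdot)$ eventually lies in $U^{(1)}\cup U^{(2)}$---but this is a fixable detail and the underlying idea is the same.
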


As we show in Proposition \ref{t:DL*}, at the points $\rho$ where $\ms
F_\epsilon(\rho)$ is not a singleton the quasi-potential admits more
than one G\^ateaux super-differential.  The proof of the above theorem
is based on a perturbation argument with respect to the limiting case
$\epsilon=0$ that we next discuss.

\subsection*{The inviscid limit}

It is well known, see e.g.\ \cite[Ch.~15]{Se}, that in the inviscid
limit $\varepsilon\downarrow 0$, the solution to the Cauchy problem
associated to \eqref{eq:1} converges to the entropy solution of the
Cauchy problem associated to the inviscid Burgers equation $u_t +
f(u)_x = 0$ with the Bardos-Leroux-N\'ed\'elec boundary conditions
\cite{bln}.  We also mention that in the case of the Burgers equation
on $\bb R$, the variational convergence as $\epsilon\downarrow 0$ of
the action functional $I^\epsilon_{[-T,0]}$ for a fixed $T>0$ is
discussed in \cite{BBMN}.  Referring to \cite{bgl1} for the case
$\rho_0>\rho_1$, we here discuss the variational convergence of the
quasi-potential $V_\epsilon$ as $\varepsilon\downarrow 0$.

In the inviscid limit $\varepsilon=0$, the stationary solutions
$\upbar\rho$ are easily described by considering the propagation of
shocks for the inviscid Burgers equation on $\bb R$.  If
$1-(\rho_1+\rho_0)>0$ an entropic shock from $\rho_0$ to $\rho_1$
travels to the right so that $\upbar\rho =\rho_0$, while $\upbar\rho
=\rho_1$ if $1-(\rho_0+\rho_1)<0$.  In the case $1-(\rho_0+\rho_1)=0$
there is a one parameter family of stationary entropic solutions which
corresponds to a stationary shock that can be placed anywhere in
$[0,1]$.  Equivalently, a stationary entropic solution $\upbar\rho$ satisfies
$f(\upbar\rho)=\min_{r\in[\rho_0,\rho_1]} f(r)$.  
For $1-(\rho_0+\rho_1)\neq 0$ it is not difficult to check that, as
$\varepsilon \downarrow 0$, the stationary solution
$\upbar\rho_\varepsilon$ converges strongly in $L^1([0,1])$ to the
constant function equal to $\upbar\rho$.  In the case
$1-(\rho_0+\rho_1)=0$, $\upbar\rho_\varepsilon$ converges to the
stationary solution of the inviscid Burgers equation with a shock
placed at $x=1/2$.

Recall \eqref{Grf} and let $\mc G : M \times \mc F \to \bb R$ be the
lower semicontinuous functional defined by
\begin{equation}
\label{Grf-in}
\mc G  (\rho,\varphi)  \;:=\; \int_{0}^1
\Big[ s(\rho) +(1-\rho)\varphi  -\log\big(1+e^{\varphi}\big)
\Big]\,dx\;. 
\end{equation}
Let also $S^o, S : M \to \bb R$ be the functionals defined by 
\begin{equation}
\label{s-in}
\qquad S^o (\rho) \;:=\; 
\inf \: \big\{\mc G(\rho,\varphi)\,,\:  \varphi \in \mc F\big\} 
\;, \qquad S (\rho) \;:=\;S^o (\rho) \;-\; S^o (\upbar{\rho})
\end{equation}
and observe the infimum is achieved by the lower semicontinuity of
$\mc G(\rho,\cdot)$ and the compactness of $\mc F$.  As discussed
before, for $\rho_0+\rho_1\neq 1$ there exists a unique stationary
entropic solution $\upbar{\rho}$ of the inviscid Burgers equation. On
the other hand, if $\rho_0+\rho_1=1$ there exists a one-parameter
family of stationary entropic solutions
$\{\upbar{\rho}_\alpha,\,\alpha\in [0,1]\}$; it is however simple to
check that $S^o(\upbar{\rho}_\alpha)$ is in fact independent of
$\alpha$. This shows the functional $S$ is well defined.

As we show in Proposition~\ref{t:gin=tgin} below, we may restrict the
infimum in \eqref{s-in} to functions $\varphi\in \mc F$ which are step
functions in the sense that $\varphi$ jumps from $\varphi_0$ to
$\varphi_1$ at a single point in $[0,1]$.  In view of this result,
simple computations show that $S$ coincides with the functional
derived in \cite{dls3} within the context of the boundary driven
asymmetric exclusion process.

In the inviscid limit $\epsilon\downarrow 0$ we expect the functional
$S_\epsilon$ to converge to $S$. From a variational point of view, the
natural notion of convergence is the so-called $\Gamma$-convergence
that we next recall, see e.g.\ \cite{Braides}.  Let $X$ be a Polish
space.  A sequence of functionals $F_n : X \to (-\infty,+\infty]$ is
said to \emph{$\Gamma$-converge} to a functional $F : X \to
(\infty,+\infty]$ if the following two conditions hold for each $x\in
X$. There exists a sequence $x_n\to x$ such that $\limsup_n F_n(x_n)
\le F(x)$ (\emph{$\Glimsup$ inequality}) and for any sequence $x_n\to
x$ we have $\liminf_n F_n(x_n) \ge F(x)$ (\emph{$\Gliminf$
  inequality}).

\begin{theorem}
\label{t:gconv}
Let $S_\varepsilon, S \,: M \to [0,\infty)$ be defined as in \eqref{ssg},
\eqref{s-in}, respectively. The family of functionals
$\{S_\varepsilon\}_{\varepsilon >0}$ $\Gamma$-converges to $S$ in $M$
as $\varepsilon\downarrow 0$. 
In particular, the functional $S: M \to [0,+\infty)$ is lower
semicontinuous.
\end{theorem}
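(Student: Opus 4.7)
\emph{Strategy.} I first establish the $\Gamma$-convergence of the un-normalized family $\{S^o_\varepsilon\}$ to $S^o$ on the compact Polish space $M$, and then deduce the theorem. Indeed, since $S_\varepsilon \ge 0$ with $S_\varepsilon(\upbar{\rho}_\varepsilon)=0$ (as noted right after Theorem~\ref{t:S=V}), the function $\upbar{\rho}_\varepsilon$ minimizes $S^o_\varepsilon$; analogously $\upbar{\rho}$ minimizes $S^o$. Because $M$ is compact the family $\{S^o_\varepsilon\}$ is automatically equi-coercive, so the fundamental theorem of $\Gamma$-convergence delivers convergence of the minima: $S^o_\varepsilon(\upbar{\rho}_\varepsilon) \to S^o(\upbar{\rho})$. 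Subtracting a convergent sequence of real constants preserves $\Gamma$-convergence, so $S_\varepsilon = S^o_\varepsilon - S^o_\varepsilon(\upbar{\rho}_\varepsilon)$ $\Gamma$-converges to $S^o - S^o(\upbar{\rho}) = S$; the lower semicontinuity of $S$ is a general property of $\Gamma$-limits.

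\emph{$\Gamma$-liminf for $S^o_\varepsilon$.} Fix $\rho^\varepsilon \to \rho$ in $M$ and let $\varphi^\varepsilon \in {\ms F}_\varepsilon(\rho^\varepsilon)$ be a minimizer, so that $S^o_\varepsilon(\rho^\varepsilon) = \mc G_\varepsilon(\rho^\varepsilon, \varphi^\varepsilon)$. By the compactness of $\mc F$ extract a subsequence along which $\varphi^\varepsilon \to \varphi$ in $\mc F$; since this implies a.e.\ convergence (as stated in Section~\ref{sec2}) and $\varphi^\varepsilon$ is uniformly bounded in $[\varphi_0,\varphi_1]$, the convergence is also strong in each $L^p([0,1])$, $p<\infty$. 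I then pass to the liminf term-by-term in $\mc G_\varepsilon(\rho^\varepsilon,\varphi^\varepsilon)$: weak$^*$ lower semicontinuity of the convex entropy gives $\liminf \int s(\rho^\varepsilon)\,dx \ge \int s(\rho)\,dx$; weak-strong pairing yields $\int (1-\rho^\varepsilon)\varphi^\varepsilon\,dx \to \int (1-\rho)\varphi\,dx$; dominated convergence gives $\int \log(1+e^{\varphi^\varepsilon})\,dx \to \int \log(1+e^{\varphi})\,dx$. The critical term is $\int s(\varepsilon \varphi^\varepsilon_x)\,dx$: setting $g^\varepsilon := \varepsilon\varphi^\varepsilon_x\in [0,1]$, one has $\int g^\varepsilon\,dx = \varepsilon(\varphi_1-\varphi_0)\to 0$. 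On $\{g^\varepsilon > 1/2\}$ a Markov bound gives measure at most $2\varepsilon(\varphi_1-\varphi_0)$ where $|s|\le \log 2$, while on $\{g^\varepsilon\le 1/2\}$ the pointwise bound $|s(a)|\le -a\log a + 2a$ combined with Jensen's inequality applied to the concave function $a\mapsto -a\log a$ yields $\int |s(g^\varepsilon)|\,dx \to 0$. Collecting these estimates, $\liminf \mc G_\varepsilon(\rho^\varepsilon,\varphi^\varepsilon) \ge \mc G(\rho,\varphi)\ge S^o(\rho)$.

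\emph{$\Gamma$-limsup and main obstacle.} For $\rho \in M$ take $\rho^\varepsilon:=\rho$ as the recovery sequence. By Proposition~\ref{t:gin=tgin} fix a step-function minimizer $\varphi^*\in \mc F$ of $\mc G(\rho,\cdot)$ with jump at some $\bar x \in [0,1]$. Define $\varphi^\varepsilon \in \mc F$ by replacing this jump with a linear ramp of slope $(\varphi_1-\varphi_0)/\sqrt{\varepsilon}$ on an interval of length $\sqrt\varepsilon$ contained in $[0,1]$ (translated inward if needed so that it stays inside the domain). For small $\varepsilon$ we have $\varepsilon\varphi^\varepsilon_x = \sqrt{\varepsilon}(\varphi_1-\varphi_0)<1$ a.e., so $\varphi^\varepsilon$ is admissible, and $\varphi^\varepsilon\to \varphi^*$ a.e. The extra entropy term is bounded by $\sqrt\varepsilon\,\log 2$, and the remaining three terms converge to their $\mc G(\rho,\varphi^*)$ counterparts by dominated convergence. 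Hence $\limsup S^o_\varepsilon(\rho)\le \mc G(\rho,\varphi^*) = S^o(\rho)$, completing the $\Gamma$-convergence of $S^o_\varepsilon$ to $S^o$ and, by the reduction above, proving the theorem. The main technical difficulty is the liminf control of $\int s(\varepsilon\varphi^\varepsilon_x)\,dx$: the minimizers are known only to lie in $\mc F$ with the constraint $\varepsilon\varphi^\varepsilon_x\le 1$, so the argument must avoid any regularity beyond what the coarse estimate on $g^\varepsilon$ yields; a secondary subtlety is that Proposition~\ref{t:gin=tgin}, which underlies the construction of the recovery sequence, is not elementary and must be proven separately.
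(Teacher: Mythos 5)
Your proof is correct and follows the same global scheme as the paper — prove $\Gamma$-convergence of the unnormalized family $\{S^o_\varepsilon\}$ and then handle the constant $S^o_\varepsilon(\upbar\rho_\varepsilon)$ — but the execution differs in several genuine respects. The paper first establishes the \emph{joint} $\Gamma$-convergence $\mc G_\varepsilon\to\mc G$ on $M\times\mc F$ (Theorem~\ref{t:gcg}) and then projects to the $\rho$-variable; you prove the $\Gamma$-convergence of $S^o_\varepsilon$ directly by picking minimizers $\varphi^\varepsilon\in\ms F_\varepsilon(\rho^\varepsilon)$ and passing to a subsequential limit in $\mc F$. The two routes carry the same mathematical content, but the paper's is slightly more economical because the same lemma also serves Theorem~\ref{t:convmin}. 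For the critical entropy term $\int s(\varepsilon\varphi^\varepsilon_x)\,dx$ your splitting-plus-Jensen estimate establishes the convergence to $0$; the paper uses the sharper and simpler observation that convexity of $s$ and $\int\varepsilon\varphi^\varepsilon_x\,dx=\varepsilon(\varphi_1-\varphi_0)$ give directly, via Jensen, the one-sided bound $\int s(\varepsilon\varphi^\varepsilon_x)\,dx\ge s\bigl(\varepsilon(\varphi_1-\varphi_0)\bigr)\to 0$, which is all that the $\Gliminf$ inequality requires. For the recovery sequence the paper approximates an arbitrary $\varphi\in\mc F$ by $C^1$ functions fixing the boundary values, whereas you invoke Proposition~\ref{t:gin=tgin} to reduce to a single step function and then smooth the jump by a linear ramp of width $\sqrt\varepsilon$; both work, and yours has the advantage of an explicit $O(\sqrt\varepsilon)$ error, though at the cost of depending on Proposition~\ref{t:gin=tgin}. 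Finally, for $S^o_\varepsilon(\upbar\rho_\varepsilon)\to S^o(\upbar\rho)$ the paper gives a self-contained $\liminf$/$\limsup$ estimate that does not use the minimality of $\upbar\rho_\varepsilon$ for $S^o_\varepsilon$; you instead combine equi-coercivity (automatic on the compact $M$) with the fundamental theorem of $\Gamma$-convergence, which requires knowing that $\upbar\rho_\varepsilon$ is a minimizer of $S^o_\varepsilon$ (a consequence of Theorem~\ref{t:S=V}, or of the direct computation noted right after it) together with $\upbar\rho_\varepsilon\to\upbar\rho$ in $M$. Note that you should not simply assert that $\upbar\rho$ minimizes $S^o$ ``analogously''; this fact is not established a priori in the paper, but it does follow from your argument, since the $\Gliminf$ inequality along $\upbar\rho_\varepsilon\to\upbar\rho$ gives $S^o(\upbar\rho)\le\liminf S^o_\varepsilon(\upbar\rho_\varepsilon)=\lim\min S^o_\varepsilon=\min S^o$, forcing equality. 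Spelling this out would close the small logical gap.
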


Given $\rho\in M$, we also expect that the minimizers of $\mc
G_\epsilon(\rho,\cdot)$ converge, as $\epsilon\downarrow 0$, to a
minimizer of $\mc G(\rho,\cdot)$. The precise statement is the
following.

\begin{theorem}
\label{t:convmin}
Fix $\rho\in M$ and let $\varphi_\epsilon \in {\ms F}_\epsilon(\rho)$. 
If $\epsilon_n\downarrow 0$ is a sequence such that
$\varphi_{\epsilon_n} \to \varphi$ for some $\varphi \in \mc F$, then 
$\varphi$ is a minimizer for $\mc G (\rho,\cdot)$. 
\end{theorem}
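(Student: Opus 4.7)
\medskip

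The plan is to rely on the fact that the minimality of each $\varphi_{\epsilon_n}$ is preserved in the limit provided we can (a) pass to the limit in $\mc G_{\epsilon_n}(\rho,\varphi_{\epsilon_n})$ and (b) construct, for each trial function $\psi\in\mc F$, a recovery sequence $\psi_n\in\mc F$ with $\limsup_n \mc G_{\epsilon_n}(\rho,\psi_n)\le \mc G(\rho,\psi)$. Assuming these, the chain
\begin{equation*}
\mc G(\rho,\varphi) \;=\; \lim_n \mc G_{\epsilon_n}(\rho,\varphi_{\epsilon_n})
\;\le\; \limsup_n \mc G_{\epsilon_n}(\rho,\psi_n)
\;\le\; \mc G(\rho,\psi)
\end{equation*}
holds for every $\psi\in\mc F$, which is the minimality claim.

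For step (a) I would use the decomposition $\mc G_{\epsilon_n}(\rho,\varphi_{\epsilon_n}) = \mc G(\rho,\varphi_{\epsilon_n}) + \int_0^1 s\big(\epsilon_n(\varphi_{\epsilon_n})_x\big)\,dx$. Since $\varphi_{\epsilon_n}\to\varphi$ in $\mc F$ implies a.e.\ pointwise convergence and the integrand $(1-\rho)\varphi-\log(1+e^{\varphi})$ is uniformly bounded on the range $[\varphi_0,\varphi_1]$, dominated convergence yields $\mc G(\rho,\varphi_{\epsilon_n})\to \mc G(\rho,\varphi)$. For the entropy correction, the constraint $\varphi_{\epsilon_n}\in\mc F$ forces $\int_0^1 (\varphi_{\epsilon_n})_x\,dx = \varphi_1-\varphi_0-(\varphi_{\epsilon_n}(0)-\varphi_0)\le \varphi_1-\varphi_0$, so $\epsilon_n(\varphi_{\epsilon_n})_x$ tends to zero in $L^1$, hence in measure; as $s$ is continuous and bounded on $[0,1]$ with $s(0)=0$, a second dominated-convergence argument produces $\int_0^1 s(\epsilon_n(\varphi_{\epsilon_n})_x)\,dx\to 0$.

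For step (b), given $\psi\in\mc F$ associated with a positive Borel measure of total mass $\varphi_1-\varphi_0$, I would take $\psi_n$ to be the piecewise-affine interpolation of $\psi$ on the uniform grid of mesh $\delta_n=\epsilon_n(\varphi_1-\varphi_0)$. On each grid cell the mass of $d\psi$ is at most $\varphi_1-\varphi_0$, so the derivative of $\psi_n$ is bounded by $1/\epsilon_n$, and $\psi_n\in\mc F$ with $\mc G_{\epsilon_n}(\rho,\psi_n)<+\infty$. As $\delta_n\downarrow 0$, $\psi_n\to\psi$ at every continuity point of $\psi$, and the same two-step analysis as in (a)---dominated convergence of the bounded part plus the vanishing of $\int_0^1 \epsilon_n(\psi_n)_x\,dx=\epsilon_n(\varphi_1-\varphi_0)$---gives $\mc G_{\epsilon_n}(\rho,\psi_n)\to \mc G(\rho,\psi)$. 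The main obstacle is step (b): for singular $\psi$, one must produce an absolutely continuous approximant respecting the hard constraint $(\psi_n)_x\le 1/\epsilon_n$ while keeping $\psi_n\to\psi$ in $\mc F$ and preserving the endpoint relations that define $\mc F$. The uniform-grid construction handles this because any atom of $d\psi$ is automatically smeared over an interval of length $\ge \epsilon_n\cdot \mathrm{mass}$; this is essentially the $\Glimsup$ ingredient needed in the proof of Theorem \ref{t:gconv}, from which the technical verification can be borrowed.
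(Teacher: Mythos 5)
Your proposal is correct and takes essentially the same route as the paper: the paper simply cites the fundamental theorem of $\Gamma$-convergence (\cite[Thm.~1.21]{Braides}) applied to Theorem~\ref{t:gcg} (equivalently, Corollary~\ref{s18}), and your liminf/limsup chain together with the recovery-sequence construction is precisely that abstract theorem unfolded. One small technical caveat in your $\Glimsup$ step: the piecewise-affine interpolant must be set to $\psi_n(0)=\varphi_0$ rather than $\psi(0)$, since $d\psi$ may carry an atom at the origin, in which case the naive interpolant would leave $\psi_n\notin\mc F$ with finite energy; this is a trivial fix that changes nothing else in the estimate.
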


\medskip
\noindent\emph{Notation warning.} 
Apart when we discuss, in Section~\ref{sec6}, the inviscid limit, the
parameter $\epsilon>0$ is kept fixed. We therefore drop from most of
the notation the explicit dependence of the functionals on $\epsilon$.

\section{Hamiltonian picture}
\label{s:hp}

As it will be clear in the following discussion, the variational
problem \eqref{qp} is naturally described within the Hamiltonian
formalism.  Accordingly, Theorem \ref{t:S=V} reflects a peculiar
geometric structure of the underlying phase space.  In this section we
present heuristically this picture, but we emphasize that the actual
proofs are logically independent from it.

The functional $I_{[-T,0]}$ in \eqref{e:2.6} 
is the action functional corresponding to the Lagrangian 
\begin{equation*}
\bb L (u,u_t) \;=\; \frac1{4\varepsilon} \, \big | u_t 
+ f(u)_x - \varepsilon u_{xx} \big |^2_{-1, \sigma (u)}\;.
\end{equation*}
The associated Hamiltonian, obtained by a Legendre transform, is given
by
\begin{equation}
\label{f15}
\bb H (\rho,h)  \; =\; 
\varepsilon \, \langle h_x, \sigma (\rho) h_x \rangle 
\; + \; 
\langle \varepsilon \rho_{xx} - f(\rho)_x , h \rangle 
\;,
\end{equation}
where $\rho:[0,1]\to [0,1]$ satisfies $\rho(0)=\rho_0$,
$\rho(1)=\rho_1$, and $h:[0,1]\to \bb R$ is the momentum, satisfying
the boundary conditions $h(0)=h(1)=0$.  The canonical equations
associated to the Hamiltonian $\bb H$ are
\begin{equation}
\label{f12}
\left\{
\begin{aligned}
u_t & \;=\; \varepsilon \, u_{xx} \;-\;  f(u)_x \;-\; 
2 \varepsilon \, (\sigma(u) H_x)_x  \\
H_t & \;=\; -  \varepsilon \, H_{xx}  \;-\; f'(u)  H_x  
\;-\;  \varepsilon \, \sigma'(u) \, H_x^2
\end{aligned}
\right.
\end{equation}
with the boundary conditions $u(t,0) =\rho_0$, $u(t,1) =\rho_1$ and
$H(t,0) = H(t,1) = 0$.  It is not clear whether the above equations do
define, even locally, a flow and the discussion will be here kept at
the informal level.

As follows from the exponential attractiveness of
$\upbar{\rho}_\varepsilon$ for the flow defined by \eqref{eq:1},
$(\upbar{\rho}_\varepsilon, 0)$ is a hyperbolic fixed point of the
Hamilton flow \eqref{f12}. Denote by $\ms M_\mathrm{s}$, $\ms
M_\mathrm{u}$ the associated stable and unstable manifolds.  Of
course, $\ms M_\mathrm{s},\,\ms M_u\subset\{(\rho,h):\,\bb H(\rho,h)
=\bb H(\upbar{\rho}_\varepsilon,0)=0\}$.  Each point $(\rho, 0)$ is
driven by the flow to $(\upbar{\rho} _\varepsilon, 0)$ as
$t\to+\infty$, and therefore $\ms M_\mathrm{s}\supset \{(\rho,h)\,:\,h=0\}$.

Recall the Poincar\'e-Cartan theorem, see e.g.\ \cite[\S~44]{arn},
which states that the integral of the symplectic one-form $\langle
h,d\rho\rangle$ along any closed path in the phase space is preserved
by the Hamiltonian flow.  This implies that the stable and unstable
manifolds are \emph{Lagrangian}, namely for any closed path 
which lies either in $\ms M_{s}$ or in $\ms M_\mathrm{u}$,
\begin{equation*}
\oint \< H,du\> \;=\; 0\;.
\end{equation*}
We can therefore define the \emph{pre-potential} $W_\varepsilon:\,\ms
M_\mathrm{u}\to\bb R$ by
\begin{equation}
\label{f23}
W_\varepsilon(\rho, h) \;=\; \int_\Gamma \< H,du\> \;,
\end{equation}
where the integral is carried over a path $\Gamma$ in $\ms
M_\mathrm{u}$ which connects $(\upbar{\rho}_\varepsilon, 0)$ to
$(\rho, h)$.  Recalling \eqref{qp}, the connection between the
quasi-potential and the pre-potential is given by
\begin{equation}
\label{f13}
\hat{V}_\varepsilon(\rho) \;=\; \inf \, 
\big\{ W_\varepsilon(\rho, h)\,,\: h\,: \: (\rho,h) \in \ms M_\mathrm{u} \big\}
\end{equation}
In a finite dimensional framework, this result is proven in
\cite{D,DD}.  For the reader's convenience, we sketch the basic
argument.

Denote by $U_\varepsilon(\rho)$ the right hand side of \eqref{f13}.
By means of compactness arguments, one shows the existence of a path
$u \in \mc U (\upbar{\rho}_\epsilon)$ satisfying $u(0) =\rho$ and such
that
\begin{equation*}
\hat{V}_\varepsilon(\rho)= I (u) 
\;=\; \int_{-\infty}^0 \bb L (u \,,\, u_t ) \,dt \;.
\end{equation*}
Since $u$ is an extremal path, it satisfies the Euler-Lagrange
equation, or, equivalently, the pair $(u,H)$, where $H = \delta \bb
L /\delta u_t$ stands for the conjugate momentum, solves
the canonical equations \eqref{f12}.  One then shows that the
trajectory $(u,H)$ lies in the unstable manifold $\ms M_\mathrm{u}$;
while $u(t)\to\upbar{\rho}_\varepsilon$ as $t\to -\infty$ follows from
the definition, some efforts are required to show that also $H(t)\to
0$ as $t\to -\infty$.  By Legendre duality, the inclusion $\ms
M_\mathrm{u}\subset\{\bb H =0\}$, and \eqref{f23}
\begin{equation*}
\begin{split}
& \hat V_\varepsilon(\rho) \;=\;
I (u) \; =\; \int_{-\infty}^0 
\bb L (u \,,\, u_t ) \,dt 
\; =\; 
\int_{-\infty}^0 \big[ \< H,u_t\>  
\;-\; \bb H (u \,,\, H ) \big] \,dt \\
& \phantom{ \hat V_\varepsilon(\rho)}
\; =\;  \int_{-\infty}^0 \< H,u_t\>  \,dt \; 
=\; W_\varepsilon(\rho,H(0)) 
\;\ge\; U_\varepsilon (\rho)\;.
\end{split}
\end{equation*}
The proof of the reverse inequality is simple. Fix $\rho$ and choose
$h$ which minimizes the right hand side of \eqref{f13}. Since
$(\rho,h)$ belongs to $\ms M_\mathrm{u}$, we need only to follow the
Hamiltonian flow \eqref{f12} to obtain a path $(u,H)$ such that $u(0)
= \rho$, $H(0)=h$ and $(u(t), H(t))\to (\upbar{\rho}_\varepsilon,0)$
as $t\to-\infty$.  The previous computations now give
\begin{equation*}
\hat V_\varepsilon(\rho)\leq I (u) 
\;=\; W_\varepsilon(\rho,h) \;=\; U_\varepsilon(\rho)\,. 
\end{equation*}
The above argument actually shows that any minimizer $u$ for the
variational problem \eqref{qp} is a solution to the canonical
equations \eqref{f12} with $u(0)=\rho$ and $H(0)=h$ where $h$ is a
minimizer for the right hand side of \eqref{f13}.

In a neighborhood of the fixed point $(\upbar{\rho}_\varepsilon,0)$,
the unstable manifold $\ms M_\mathrm{u}$ can be written as a graph,
namely it has the form $\ms M_\mathrm{u} = \{(\rho,h)\, :
\,h=m_\mathrm{u}(\rho)\}$ for some map $m_\mathrm{u}$.  In this
case, the infimum on the right hand side of \eqref{f13} is trivial and
$\hat V_\varepsilon(\rho)= W_\varepsilon(\rho,m_\mathrm{u}(\rho))$.
In general, though, this is not true globally. Given $\rho\in M$, to
each $h$ such that $(\rho,h)\in\ms M_\mathrm{u}$, there corresponds a
critical point for the variational problem \eqref{qp}.  It may happen,
for special $\rho$, that the variational problem on the right hand
side of \eqref{f13} admits more than a single minimizer.  In this case
there is also more than one minimizer for the variational problem
\eqref{qp}.  In particular, as will be clearer in the following,
Theorem~\ref{t:unmine} implies that, for either $\varepsilon$ close to
$\varepsilon_0$ or $\varphi_1-\varphi_0$ small, the manifold $\ms
M_\mathrm{u}$ is globally a graph.  On the other hand, when 
$\ms P (\rho)$ is not a singleton for some $\rho\in M$ the manifold
$\ms M_\mathrm{u}$ is not globally a graph. Finally,
Theorem~\ref{t:degmine} implies that for $\varepsilon$ small enough
there exist functions $\rho$ such that the minimizer for the right hand side
of \eqref{f13} is not unique.

In view of \eqref{f13}, to prove heuristically Theorem \ref{t:S=V} we
need to replace $W_\epsilon$ by $\mc G_\epsilon$ on the right hand
side of \eqref{f13}. It is convenient to perform the symplectic change
of variables $(\rho,h) \mapsto (\varphi,\pi)$ given by
\begin{equation}
\label{scv}
\begin{cases}
\varphi = s'(\rho) -h\;, \\
\pi = \rho\;.
\end{cases}
\end{equation}
In the new variables $(\varphi,\pi)$ the Hamiltonian reads 
\begin{equation}
\label{tHam}
{\widetilde{\bb H}} (\varphi,\pi)
\;=\;  \bb H (\pi, s'(\pi) -\varphi) 
\;=\; \epsilon \langle \varphi_x, \sigma(\pi)\varphi_x\rangle 
\;-\; \langle \epsilon \pi_{x} + \sigma(\pi) ,\varphi_x  \rangle
\;+\; \rho_1 \,-\, \rho_0\;,
\end{equation}
where we used that $s'(\pi) - \varphi$ vanishes at the boundary,
$\sigma(\pi) s''(\pi) =1$, $f=\sigma$ and $\rho(0)=\rho_0$,
$\rho(1)=\rho_1$.  The corresponding canonical equations are
\begin{equation}
\label{nhf}
\left\{
\begin{aligned}
\Phi_t & \;=\; \frac {\delta {\widetilde{\bb H}}}{\delta\Pi}
\;=\;
\varepsilon \, \Phi_{xx}  \;-\; \sigma'(\Pi)  \,
\Phi_x  \, (1-\varepsilon\Phi_x)\;,
\\
\Pi_t & \;=\; - \frac {\delta {\widetilde{\bb H}}}{\delta\Phi}
\;=\;
- \varepsilon \, \Pi_{xx} \;-\;  \sigma (\Pi)_x \;+\; 
2 \varepsilon \, \big( \sigma(\Pi) \Phi_x \big)_x \;.
\end{aligned}
\right.
\end{equation}

In the new variables the fixed point $(\upbar{\rho}_\epsilon,0)$ reads
$(s'(\upbar{\rho}_\epsilon),\upbar{\rho}_\epsilon)$. The associated
stable manifold is $\{ (\varphi,\pi):\, \varphi=s'(\pi)\}$.  Let
\begin{equation}  
\label{Sig}
\Sigma = \Big\{ (\varphi,\pi) \, :\: 
\pi = \frac{1}{1+e^\varphi} - \frac{\epsilon \varphi_{xx}}
{\varphi_x(1-\epsilon\varphi_x)} \Big\}\;.
\end{equation}
By using that $\sigma(\pi)=\pi(1-\pi)$, 
a long and tedious computation that we omit shows that the set $\Sigma$
is invariant under the flow \eqref{nhf}. 
More precisely, pick a point $(\varphi,\pi)\in \Sigma$ and let $\Phi$ be
the solution to  
\begin{equation*}
  \Phi_t = -\epsilon \Phi_{xx} + \frac{1-e^\Phi}{1+e^\Phi}
  \Phi_x(1-\epsilon\Phi_x) 
\end{equation*}
with initial condition $\Phi(0)=\varphi$. Set now 
\begin{equation*}
  \Pi = \frac{1}{1+e^\Phi} 
  - \frac{\epsilon \Phi_{xx}}{\Phi_x(1-\epsilon\Phi_x)}
\end{equation*}
and observe that $\Pi(0)=\pi$ since $(\varphi,\pi)\in \Sigma$.  Then
$(\Phi,\Pi)$ is a solution to the canonical equations \eqref{nhf}.
Moreover, as we show in Lemmata \ref{s10} and \ref{s11}, under the
Hamiltonian flow any point in $\Sigma$ converges to the fixed point
$(s'(\upbar{\rho}_\epsilon),\upbar{\rho}_\epsilon))$ as $t\to
-\infty$.  This implies $\Sigma$ is the unstable manifold.  The
previous arguments really only show that $\{(\varphi,\pi):\,
\varphi=s'(\pi)\} \subset \ms M_\mathrm{s}$ and $\Sigma\subset\ms
M_\mathrm{u}$. On the other hand, as the tangent spaces to
$\{(\varphi,\pi):\,\varphi=s'(\pi)\}$ and $\Sigma$ at
$(s'(\upbar{\rho}_\epsilon),\upbar{\rho}_\epsilon)$ span the whole
space, we informally claim that the previous inclusions are
equalities.

At this point, the informal deduction of Theorem~\ref{t:S=V} will be
completed by the computation of the pre-potential. This is easily achieved
in the new variables $(\varphi,\pi)$. 
We start by the generating function of the symplectic transformation
\eqref{scv}. Let 
\begin{equation*}
  F(\rho,\varphi) = \int_0^1 \big[ s(\rho) - \rho\, \varphi \big] \, dx
\end{equation*}
be the so-called \emph{free generating function} of \eqref{scv} (see
e.g.\ \cite[\S~48]{arn}), so that
\begin{equation*}
h = \frac {\delta F}{\delta\rho} 
\,,\qquad \pi = - \frac {\delta F}{\delta\varphi} \;\cdot
\end{equation*}
Equivalently,
\begin{equation*}
\langle h , d\rho\rangle - \langle\pi, d\varphi \rangle= d F\;.
\end{equation*}
Hence, for any path $\Gamma=\{\gamma(t), \, t\in[0,1]\}$ in the phase
space 
\begin{equation*}
\int_\Gamma \langle H, d\Pi \rangle 
=  \int_\Gamma  \langle\Pi, d\Phi\rangle 
+ F(\gamma(1)) - F(\gamma(0)) \;.
\end{equation*}
Assume now that $\Gamma\subset \Sigma$. By \eqref{Sig} and since
\begin{equation*}
\frac{\epsilon \varphi_{xx}}{\varphi_x(1-\epsilon\varphi_x) }  
= \epsilon\, \big[s'(\epsilon\varphi_x)\big]_x\; ,
\end{equation*}
we have that
\begin{equation*}
\int_\Gamma  \langle\Pi, d\Phi\rangle = 
\int_0^1\big[ \Phi(t) - \log\big(1+e^{\Phi(t)}\big) 
+ s\big(\epsilon\Phi_x(t)\big)\big]\,dx
\Big|_{t=0}^{t=1}\;.
\end{equation*}
Therefore, in view of \eqref{Grf}, the previous identities  imply that 
\begin{equation*}
\int_\Gamma \langle H, du \rangle 
\;=\; \int_\Gamma \langle H, d\Pi \rangle 
\;=\; \mc G_\epsilon \big(\Pi (1),\Phi(1)\big) 
\;-\; \mc G_\epsilon \big(\Pi (0),\Phi(0)\big)\;.
\end{equation*}
Hence, by \eqref{f23}, $W_\epsilon(\rho,h) = \mc G_\epsilon (\rho,
\varphi) - \mc G_\epsilon (\upbar{\rho}_\epsilon,
s'(\upbar{\rho}_\epsilon))$, where $h$ and $\varphi$ are
related by \eqref{scv}. As stated above, 
$S^o_\varepsilon(\upbar{\rho}_\epsilon) =  \mc G_\epsilon (\upbar{\rho}_\epsilon,
s'(\upbar{\rho}_\epsilon))$ and therefore, in view of \eqref{f13},
\begin{equation*}
\hat{V}_\varepsilon (\rho) \;=\; \inf \, 
\big\{ \mc G_\epsilon (\rho, \varphi) \,,\: \varphi \,: \:
(\varphi, \rho) \in \Sigma  \big\} \; -\; S^o_\varepsilon
(\upbar{\rho}_\epsilon)\; . 
\end{equation*}
Since, by Theorem~\ref{t:1-1}, $\Sigma$ is the set of critical points
of the functional $\mc G_\epsilon(\pi, \cdot)$, we can drop in the
previous formula the condition that $(\varphi, \rho)$ belongs to the
unstable manifold $\Sigma$ as this condition will be automatically
satisfied by any minimizer. This concludes the heuristic proof of
Theorem~\ref{t:S=V}.

\section{The static variational problem}
\label{sec3}

In this section we analyze the variational problem \eqref{ssg}. In
particular, we prove here Theorems~\ref{t:1-1} and  \ref{t:unmine}.

\subsection*{Critical points of $\mc G _\epsilon(\rho,\cdot)$}

Fix $\rho\in M$. Recall the definition \eqref{g03} of the operator
$K_{\rho}:\mc F\to\mc F$ and that we denote by $\ms P(\rho)\subset \mc
F$ the set of fixed points of $K_\rho$. We claim that there exists a
unique $A=A(\varphi)\in(0,\infty)$, depending also on
$\varphi_0,\varphi_1,\epsilon,\rho$, such that $(K_{\rho}\,
\varphi)(1) = \varphi_1$.  Note indeed that the integral on the right
hand side of \eqref{g03} evaluated for $x=1$ is strictly increasing in
$A$, equals $0$ when $A$ vanishes, and increases to $1$ as
$A\uparrow\infty$.  Since $\varepsilon (\varphi_1-\varphi_0) =
\epsilon/\epsilon_0 \in (0,1)$ by assumption, the claim follows.  By
using that $0\le \rho\le 1 $ and $\varphi_0\le \varphi \le \varphi_1$,
it is also straightforward to check that there exist reals $0<b_0 <
b_1<\infty$, $b_i = b_i(\varphi_0, \varphi_1, \varepsilon)$, such that
for any $\rho\in M$ and $\varphi\in \mc F$ we have $b_0 \le A \le
b_1$.

Let $M^o$ be the subset of $M$ given by the smooth functions bounded
away from zero and one and satisfying the boundary conditions
$\rho(0)=\rho_0$, $\rho(1)=\rho_1$:
\begin{equation}
\label{Mo}
M^o := \big\{ \rho\in C^2([0,1])\,:\: 0<\rho<1\,,\: 
\rho(0)=\rho_0\,,\:\rho(1)=\rho_1\big\}\;.
\end{equation}
Recall the notation $\mc H_0^1(\gamma)$ introduced before
Theorem~\ref{s01} and note that for $\rho\in M^o$ the norm in $\mc
H_0^1(\sigma(\rho))$ is in fact equivalent to the norm in the standard
Sobolev space $\mc H_0^1$.  Let the Hamiltonian $\bb H : M^o
\times \mc H_0^1\to\bb R$ be the functional defined in \eqref{f15}:
\begin{equation}
\label{He}
\bb H (\rho,h) := 
\varepsilon \langle h_x, \sigma (\rho) h_x \rangle 
\; - \; \langle \varepsilon \rho_{x} - f(\rho), h_x \rangle \;.
\end{equation}

\begin{proposition}
\label{t:fp}
Fix $\varphi_0<\varphi_1$, $\epsilon\in (0,\epsilon_0)$, and
$\rho\in M$.  
\begin{itemize}
\item[(i)]The set $\ms P(\rho)$ is not empty.

\item[(ii)]If $\varphi\in \ms P(\rho)$ then $\varphi\in C^1([0,1])$.
  Moreover, there exist $\delta\in (0,1/2)$ and $C \in (0,\infty)$
  independent of $\rho$ such that any $\varphi\in\ms P(\rho)$
  satisfies $\delta \le \epsilon \varphi_x \le 1-\delta$, and
\begin{equation*}
\big| \varphi_x(x) -\varphi_x (y)\big| \le C |x-y| 
\qquad \;\forall \: x,y\in [0,1]
\end{equation*}
Finally, if $\varphi\in \ms P(\rho)$ then it solves \eqref{Deq} a.e.
  
\item[(iii)]If $\rho\in M^o$ then any $\varphi\in \ms P(\rho)$ belongs
  to $C^2([0,1])$ and satisfies $\bb H \big(\rho, s'(\rho) -
  \varphi\big) =0$.
\end{itemize}
\end{proposition}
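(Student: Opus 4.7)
My plan addresses the three parts in order.

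For (i), I would apply the Schauder--Tychonoff fixed point theorem to $K_{\rho,\epsilon}:\mc F\to\mc F$, viewing $\mc F$ as a convex compact subset of the space of Radon measures with the weak topology. Before this, I must check that $K_{\rho,\epsilon}$ is well defined: the map $A\mapsto(K_{\rho,\epsilon}\varphi)(1)$ is strictly increasing from $\varphi_0$ to $\varphi_0+1/\epsilon$, and the hypothesis $\epsilon<\epsilon_0=1/(\varphi_1-\varphi_0)$ gives $\varphi_0+1/\epsilon>\varphi_1$, so there is a unique admissible $A$. Using the elementary bound $|(1+e^\varphi)^{-1}-\rho|\le 1$ yields $A\in[b_0,b_1]$ with $b_0,b_1$ depending only on $\epsilon$ and $\varphi_0,\varphi_1$, as already asserted before the proposition. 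For continuity of $K_{\rho,\epsilon}$ on $\mc F$, I would use that weak convergence in $\mc F$ entails a.e.\ convergence of the associated distribution functions; dominated convergence then gives continuity of $\varphi\mapsto\int_0^y[(1+e^\varphi)^{-1}-\rho]\,dz$ and of $A$ as a function of $\varphi$, hence continuity of $K_{\rho,\epsilon}\varphi$.

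For (ii), if $\varphi=K_{\rho,\epsilon}\varphi$ then $\varphi$ equals an explicit integral whose integrand is continuous, so $\varphi\in C^1([0,1])$ and
\begin{equation*}
\epsilon\varphi_x \;=\; \frac{\Psi}{1+\Psi}\,,\qquad \Psi(x)=A\,\exp\bigl(G(x)/\epsilon\bigr)\,,\qquad G(x)=\int_0^x\bigl[(1+e^\varphi)^{-1}-\rho\bigr]\,dz.
\end{equation*}
Since $|G|\le 1$ and $A\in[b_0,b_1]$, the quantity $\Psi$ is bounded above and away from zero by constants independent of $\rho$, proving the uniform bound $\delta\le\epsilon\varphi_x\le 1-\delta$. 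A direct differentiation gives $\epsilon^2\varphi_{xx}=(r-\rho)\,\Psi/(1+\Psi)^2$ with $r=(1+e^\varphi)^{-1}$; since $\Psi/(1+\Psi)^2\le 1/4$ and $|r-\rho|\le 1$, this yields the uniform Lipschitz estimate on $\varphi_x$. Setting $\omega=\epsilon\varphi_x$, the identity $\omega_x/[\omega(1-\omega)]=(r-\rho)/\epsilon$ is a logarithmic derivative of the explicit formula and coincides with \eqref{Deq}.

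For (iii), if $\rho\in M^o$ then $\rho\in C^2$, and a bootstrap via the integral formula gives $\varphi\in C^2([0,1])$ with \eqref{Deq} holding classically. To verify $\bb H(\rho,s'(\rho)-\varphi)=0$, set $h=s'(\rho)-\varphi$; the matching of boundary values $s'(\rho_i)=\varphi_i$ gives $h(0)=h(1)=0$. Using $s''(\rho)=1/\sigma(\rho)$ and $f=\sigma$, a direct expansion of \eqref{He} reduces the vanishing of $\bb H$ to the identity
\begin{equation*}
\int_0^1\varphi_x\bigl[\epsilon\rho_x+\sigma(\rho)(1-\epsilon\varphi_x)\bigr]\,dx\;=\;\rho_1-\rho_0.
\end{equation*}
The main obstacle is proving this identity. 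The key step is the decomposition $\sigma(\rho)=\sigma(r)+(\rho-r)(1-\rho-r)$ with $r=(1+e^\varphi)^{-1}$, combined with \eqref{Deq} rewritten as $\varphi_x(1-\epsilon\varphi_x)(\rho-r)=-\epsilon\varphi_{xx}$, which turns the $(1-\rho-r)$ piece into $-\epsilon\varphi_{xx}(1-\rho-r)$. An integration by parts of this term, together with $r_x=-\sigma(r)\varphi_x$, yields a boundary contribution proportional to $1-\rho-r$ which vanishes at $x=0,1$ because $\rho_i+(1-\rho_i)=1$. The remaining volume contributions cancel pairwise, reducing the left-hand side to $\int_0^1\sigma(r)\varphi_x\,dx=-[r]_0^1=\rho_1-\rho_0$, which completes the verification.
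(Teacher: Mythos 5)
Your proposal is correct and follows essentially the same approach as the paper: Schauder's fixed point theorem for (i), the explicit structure of $K_{\rho,\epsilon}$ for (ii), and for (iii) the reduction of $\bb H(\rho,s'(\rho)-\varphi)=0$ to the integral identity $\langle \rho_x,1-\epsilon\varphi_x\rangle = \langle\sigma(\rho),\varphi_x(1-\epsilon\varphi_x)\rangle$ followed by the key algebraic identity $\sigma(\rho)-\sigma(r)=(\rho-r)(1-\rho-r)$ and an integration by parts whose boundary contribution vanishes because $\rho_i=e^{\varphi_i}/(1+e^{\varphi_i})$. Your presentation of (ii) is more explicit than the paper's (you write out $\epsilon\varphi_x=\Psi/(1+\Psi)$ and $\epsilon^2\varphi_{xx}=(r-\rho)\Psi/(1+\Psi)^2$, where the paper only asserts the bounds), and in (iii) you integrate the $\epsilon\varphi_{xx}$ term by parts whereas the paper integrates the $\rho_x$ term by parts after adding and subtracting $e^\varphi/(1+e^\varphi)$; these are two organizations of the same computation, and both rely on the same cancellations.
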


\begin{proof}\hfill\break
\noindent
\emph{(i)} It is simple to check that $K_\rho:\mc F\to\mc F$ is
    continuous. By the convexity and compactness of $\mc F$, 
    the statement follows from Schauder's fixed point theorem. 

\noindent
\emph{(ii)} Let $\varphi\in \ms P(\rho)$.  The statement $\varphi\in
C^1([0,1])$ follows immediately from the definition of $K_\rho$.  By
using that $0<b_0 \le A \le b_1<\infty$, it is simple to check there exists
$\delta>0$ such that $\delta \le \epsilon\varphi_x\le 1-\delta$ as
well as the Lipschitz bound on $\varphi_x$.  In view of these bounds,
we can rewrite the equation $K_\rho\, \varphi = \varphi$ as
\begin{equation*}
\log \frac{\varepsilon \varphi_x(x)}{1- \varepsilon \varphi_x(x)} 
\;=\; \log A \;+\; \frac 1\varepsilon \int_0^x 
\Big[  \big( 1 + e^{\varphi (y)} \big)^{-1} - \rho(y) \Big] \, dy\;. 
\end{equation*}
Since $\varphi_x$ is a.e.\ differentiable, the above identity implies
that $\varphi$ satisfies the differential equation in \eqref{Deq} a.e.
    
\noindent
\emph{(iii)} The first statement is trivial. To prove the second,
observe that if $\rho\in M^o$ then $s'(\rho) - \varphi$ vanishes at
the boundary.  Recalling that $f = \sigma$, an integration by parts
shows that $\bb H \big( \rho, s'(\rho) - \varphi \big) = 0 $ is
equivalent to
\begin{equation*}
\big\langle \rho_x ,  (1-\varepsilon \varphi_x) \big\rangle 
\;-\; \big\langle \sigma(\rho) ,  \varphi_x  
(1 - \varepsilon \varphi_x) \big \rangle \;=\; 0\; .
\end{equation*}
To eliminate from this equation any derivative of $\rho$, we need to
integrate by parts the first term. To avoid boundary terms, we add and
subtract $e^\varphi /[1+e^{\varphi}]$ from $\rho$ and then integrate
by parts. After these steps the previous equation becomes
\begin{equation*}
\Big\langle \rho - \frac{e^\varphi}{1 + e^\varphi} \,,\,  
\varepsilon \varphi_{xx} \;+\;
\varphi_x \,(1-\varepsilon\varphi_x) \, 
\Big( \rho - \frac{1}{1 + e^\varphi} \Big) \Big\rangle 
\;=\; 0\;,
\end{equation*}
where we used that $\sigma(a)=a(1-a)$ which implies that $\sigma(b) -
\sigma(a) = (b-a)(1-a-b)$.  To conclude the proof it is now enough to
recall that, in view of item (ii), $\varphi$ solves \eqref{Deq}.
\end{proof}

\begin{proof}[Proof of Theorem~\ref{t:unmine}]
    \hfill\break 
\noindent
\emph{(i)} Recall that $\epsilon_0(\varphi_1-\varphi_0)=1$. Let
$\tilde{\mc F} :=\{\varphi\in C^1([0,1]):\, \varphi(0)=\varphi_0,\,
\varphi(1)=\varphi_1,\, 0\le \epsilon \varphi_x\le 1 \}$, and observe
that $\tilde{\mc F}\subset \mc F$.  Fix $\rho\in M$ and consider the
integro-differential operator $\mc K_\rho^{(1)}$ on $\tilde{\mc F}$
defined by
\begin{equation*}
\big(\mc K_\rho^{(1)} \varphi\big)\, (x) 
:= \varphi_0 +\frac x\epsilon 
- \Big( \frac 1\epsilon -\frac 1{\epsilon_0} \Big)
\frac{ 
\displaystyle
\int_{0}^x 
\exp\Big\{ \int_{0}^y \mc R^{(1)} (\rho,\varphi;z)\,dz \Big\} \,dy }
{\displaystyle
\int_{0}^1 
\exp\Big\{ \int_0^y \mc R^{(1)} (\rho,\varphi;z)\,dz
\Big\}\,dy } \;,
\end{equation*}
where
\begin{equation*}
\mc R^{(1)} (\rho,\varphi;x) :=   \epsilon^{-1}
\Big[ \rho(x) - \frac{1}{1+e^{\varphi(x)}}\Big] \varphi_x(x) \;,
\end{equation*}
which is informally obtained multiplying \eqref{Deq} by $\varphi_x$
and integrating the resulting equation.  It is simple to check that if
$\epsilon_0-\epsilon$ is small enough, then $\mc K_\rho^{(1)} :
\tilde{\mc F}\to \tilde{\mc F}$.

Let $\varphi\in \ms P(\rho)$. By Proposition~\ref{t:fp}, $\varphi$
solves \eqref{Deq} a.e.\ and therefore it is also a fixed point of
$\mc K_\rho^{(1)}$.  Consider now the set $\tilde{\mc F}$ endowed with
the distance $\mathrm{d} (\varphi,\psi):= \sup_x \, | \varphi_x(x)
-\psi_x(x)|$.  It is simple to show that, provided
$\epsilon_0-\epsilon$ is small enough, the operator $\mc K_\rho^{(1)}$
is a contraction with respect to this distance.  Namely, there exists
$\alpha\in (0,1)$ such that $\mathrm{d}\big(\mc K_\rho^{(1)} \varphi,
\mc K_\rho^{(1)} \psi) \le \alpha \, \mathrm{d}(\varphi,\psi)$.  This
yields uniqueness of the fixed point of $\mc K_\rho^{(1)}$ and
concludes the proof.

\noindent
\emph{(ii)} The proof is achieved by the same argument of the previous
item by considering the integro-differential operator ${\mc
  K}_\rho^{(2)}$ on $\tilde{\mc F}$ defined by
\begin{equation*}
\big( {\mc K}_\rho^{(2)} \varphi\big)\, (x) 
:=   \varphi_0 + \big(\varphi_1-\varphi_0\big)\:
\frac{ 
\displaystyle
\int_{0}^x \exp\Big\{ 
\int_{0}^y {\mc R}^{(2)} (\rho,\varphi;z) \,dz \Big\}\,dy }
{\displaystyle \int_{0}^1 
\exp\Big\{ \int_{0}^y {\mc R}^{(2)} (\rho,\varphi;z)\,dz
\Big\} \,dy }\;,
\end{equation*}
where
\begin{equation*}
{\mc R}^{(2)}(\rho,\varphi;x) :=  \epsilon^{-1}
\Big[ \frac{1}{1+e^{\varphi(x)}} -\rho(x) \Big] 
\big[ 1-\epsilon\varphi_x(x) \big]\;,
\end{equation*}
which is informally obtained multiplying \eqref{Deq} by
$1-\epsilon\varphi_x$ and integrating the resulting equation.

\noindent
\emph{(iii)} Fix a strictly increasing function $\rho\in M\cap
C^1([0,1])$ and let $\varphi\in \ms P(\rho)$.  We shall show that the
quadratic form given by the second variation of the functional $\mc
G_\varepsilon (\rho, \cdot)$ evaluated at $\varphi$ is uniformly
elliptic. This implies uniqueness of the critical point.

The second variation of $\mc G_\varepsilon (\rho , \cdot)$ evaluated
at $\varphi$ is the quadratic form
\begin{equation}
\label{f08}
\Big\langle h \,,\, \frac {\delta^2}{\delta \varphi^2} 
\mc G_\varepsilon(\rho, \varphi) \, h \Big\rangle 
\;=\; \int_{0}^{1}  \Big[ \frac {\varepsilon \, h_x^2} {\varphi_x 
(1-\varepsilon \varphi_x)} 
\;-\; \frac {e^{\varphi} \, h^2 }{\big( 1+ e^{\varphi}\big)^2}
\Big]\, dx
\end{equation}
defined on functions $h$ in $\mc H^1_0$.  Let $\psi :=\varphi_x$.
Since $\rho$ and $\varphi$ are smooth, by differentiating \eqref{Deq}
we deduce that
\begin{equation*}
\Big( \frac{\varepsilon \psi_x}{\varphi_x (1- \varepsilon \varphi_x)} 
\Big)_{\! x} \;+\; \frac {e^{\varphi}\, \psi }{\big(1+ e^{\varphi}\big)^2}  
\;=\; - \; \rho_x\; .
\end{equation*}
Performing the change of variables $h = \psi \, g$ in \eqref{f08},
which is legal because $\psi$ is smooth and strictly positive, a two
lines computation based on the previous identity shows that
\begin{equation*}
\Big\langle (\psi \,g)  , \frac {\delta^2}{\delta \varphi^2} 
\mc G_\varepsilon(\rho, \varphi) \, (\psi\, g) \Big\rangle
= \int_{0}^1\Big[ 
\frac {\varepsilon \psi^2}{\varphi_x [1- \varepsilon \varphi_x]} 
\, g_x^2    \;+\; \psi  \, \rho_x \, g^2 \Big] \, dx \; .
\end{equation*}
By item (ii) in Proposition~\ref{t:fp} and the hypothesis $\rho_x >
0$, we deduce that there exists a constant $c>0$ depending on $\rho$,
but independent of $\varphi\in \ms P(\rho)$, such that
\begin{equation*}
\Big\langle h , \frac {\delta^2}{\delta \varphi^2} 
\mc G_\varepsilon(\rho, \varphi) \, h \Big\rangle
\;\ge \; c \, \langle h,h\rangle
\end{equation*}
which concludes the proof.
\end{proof}

\subsection*{Minimizers of $\mc G _\epsilon(\rho,\cdot)$} 

We here analyze the minimizers of the functional $\mc G_\epsilon(\rho,
\cdot)$.

\begin{lemma}
\label{s02}
Let $\rho\in M$ be smooth. Then any minimizer of $\mc G_\epsilon
(\rho,\cdot)$ is a fixed point of $K_\rho$, i.e.\   
\begin{equation*}
{\ms F} (\rho) \;\subset\; \ms P (\rho)
\end{equation*}
\end{lemma}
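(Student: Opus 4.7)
The plan is to derive the Euler--Lagrange equation \eqref{Deq} for a minimizer $\varphi$ of $\mc G_\epsilon(\rho,\cdot)$ and to observe that its integrated form, with constants fixed by $\varphi(0)=\varphi_0$ and $\varphi(1)=\varphi_1$, coincides with the fixed point equation $K_\rho \varphi = \varphi$. Since $\mc G_\epsilon(\rho,\varphi)<+\infty$, the measure $d\varphi$ is absolutely continuous with density $\varphi_x \in [0,\epsilon^{-1}]$ a.e., and the boundary conditions $\varphi(0)=\varphi_0$, $\varphi(1)=\varphi_1$ hold automatically.

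The first substantive step is to show that at a minimizer the constraint $\varphi_x \in [0,\epsilon^{-1}]$ is never saturated: $0 < \epsilon\varphi_x < 1$ a.e. Assume by contradiction that $E := \{\epsilon\varphi_x = 0\}$ has positive measure. Since $\varphi_0 < \varphi_1$, there exists $\kappa > 0$ and a set $F$ of positive measure on which $\epsilon\varphi_x \ge \kappa$. Pick small subintervals $I \subset E$ and $J \subset F$ and construct a competitor $\tilde\varphi \in \mc F$ by increasing $\varphi_x$ by $\delta$ on $I$ and decreasing it by $\delta|I|/|J|$ on $J$; total mass is preserved and $\tilde\varphi \in \mc F$ provided $\delta$ is small. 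Using the asymptotic $s(\epsilon\delta) \sim \epsilon\delta\log(\epsilon\delta)$ and the Lipschitz character of $s$ on $[\kappa/2,1-\kappa/2]$, the change in $\int s(\epsilon\varphi_x)\,dx$ is $|I|\,\epsilon\delta\,[\log(\epsilon\delta) + O(1)]$, negative and of order $|\delta\log\delta|$; meanwhile $\|\tilde\varphi - \varphi\|_\infty = O(\delta|I|)$, so the change in the integrand $(1-\rho)\varphi - \log(1+e^\varphi)$ is only $O(\delta)$. For $\delta$ small the gain from $s$ dominates, contradicting minimality. A symmetric argument based on $s(1-\delta)\sim\delta\log\delta$ rules out the case $\{\epsilon\varphi_x = 1\}$ of positive measure.

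Once non-degeneracy is established, the sets $A_\delta := \{\delta < \epsilon\varphi_x < 1-\delta\}$ have measure $1-o(1)$ as $\delta\downarrow 0$. For a test function $h \in C^\infty_0([0,1])$ approximated by functions supported in (a mollification of) $A_\delta$, the variation $\varphi_t := \varphi + th$ lies in $\mc F$ for $|t|$ small, and the vanishing of the first variation of $\mc G_\epsilon(\rho,\cdot)$ gives
\begin{equation*}
\int_0^1 \Big[ \epsilon\, s'(\epsilon\varphi_x)\, h_x + \Big(1-\rho - \frac{e^{\varphi}}{1+e^{\varphi}}\Big) h \Big]\, dx \;=\; 0 \, .
\end{equation*}
A density argument extends the identity to all $h \in C^\infty_0([0,1])$, and integrating by parts together with $s''(a)\sigma(a) = 1$ yields the distributional form of \eqref{Deq}. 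Since $\rho$ is smooth, the right-hand side of the resulting ODE is continuous, so $\varphi_x$ is Lipschitz and stays strictly between $0$ and $\epsilon^{-1}$, and \eqref{Deq} holds classically. Two explicit integrations, with the constant $A$ chosen so that $\varphi(1)=\varphi_1$, reconstruct exactly the formula \eqref{g03}, proving $\varphi = K_\rho\varphi$ and hence $\varphi \in \ms P(\rho)$.

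The main obstacle is the non-degeneracy step, because a priori the minimizer could saturate $\epsilon\varphi_x \in \{0,1\}$ on a positive measure set: note that $s(0) = s(1) = 0$, so saturation does not by itself cost infinite energy. The competitor argument succeeds only because $s'$ blows up at the endpoints, producing the $|\delta\log\delta|$ gain that beats the $O(\delta)$ fluctuation of the non-convex term $(1-\rho)\varphi - \log(1+e^\varphi)$; carrying this out with a measure-theoretic construction of $\tilde\varphi \in \mc F$ is the one genuinely delicate technical point, after which the derivation of \eqref{Deq} and the identification with $K_\rho$ are routine.
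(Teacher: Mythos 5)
Your approach is genuinely different from the paper's. The paper avoids the competitor construction entirely: it runs the descending gradient flow $v_t = \varepsilon v_{xx} + v_x(1-\varepsilon v_x)[\rho - (1+e^v)^{-1}]$ started at a minimizer $\varphi$, uses classical parabolic regularity (and the strong maximum principle, Theorem~3.7 of \cite{pw}) to conclude that $0 < \varepsilon v_x(t) < 1$ instantaneously for $t>0$, so $v(t)$ is smooth, in the open constraint set, still a minimizer (since $\mc G_\varepsilon(\rho,\cdot)$ is a Lyapunov functional), and hence a classical solution of \eqref{Deq}; finally it lets $t\downarrow 0$ in $K_\rho v(t)=v(t)$. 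This shifts all the delicate boundary-of-constraint issues onto the maximum principle, where they are standard. Your route is the direct calculus-of-variations argument and needs none of the parabolic machinery, which is a virtue, but this is exactly why the constraint-saturation step is harder than you make it look.

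There are two genuine gaps as you have written it. First, your competitor argument only establishes that $|\{\varepsilon\varphi_x\in\{0,1\}\}|=0$. This is strictly weaker than the essential bounds $\delta\le\varepsilon\varphi_x\le 1-\delta$ which your subsequent Euler--Lagrange derivation needs: with only measure-zero saturation, $s'(\varepsilon\varphi_x)$ is finite a.e.\ but not a priori in $L^\infty$ (nor even $L^1$), and the first variation with general test functions may not make sense. In fact your stated hypothesis, ``$E:=\{\varepsilon\varphi_x=0\}$ has positive measure'', could simply be false for a bad competitor (e.g.\ $\varepsilon\varphi_x$ approaching $0$ without attaining it), and then your contradiction gives nothing. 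The competitor argument \emph{can} be pushed to give a quantitative bound --- the gain $|I|\varepsilon\delta\,s'(\eta+\varepsilon\delta)$ on a set $I\subset\{\varepsilon\varphi_x<\eta\}$ beats the $O(\delta|I|)$ loss once $\eta+\varepsilon\delta$ is below an explicit threshold, yielding $\operatorname{ess\,inf}\varepsilon\varphi_x>0$ --- but that is not the argument you wrote. You also need to handle the degenerate case where $\{0<\varepsilon\varphi_x<1\}$ itself has measure zero (then there is no admissible set $F$ with $\kappa\le\varepsilon\varphi_x\le 1-\kappa$; one must instead take $J\subset\{\varepsilon\varphi_x=1\}$, where decreasing also gives a $\delta\log\delta$ gain). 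And $E$ is merely measurable, so ``subintervals $I\subset E$'' should be ``measurable subsets''.

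Second, your ``density argument extends the identity to all $h\in C_0^\infty$'' is stated but not given, and it is not a routine density pass. What actually closes the loop is: the first variation with $h_x$ supported in $A_\delta$ and of zero mean gives $\varepsilon s'(\varepsilon\varphi_x) = G + c_\delta$ a.e.\ on $A_\delta$ with $G$ continuous; since $A_{\delta'}\subset A_\delta$ for $\delta'>\delta$ and all $A_\delta$ have positive measure, the constant $c_\delta$ is independent of $\delta$; hence $s'(\varepsilon\varphi_x)\in L^\infty$ on $\bigcup_\delta A_\delta$, which has full measure, and \emph{this} is what finally yields the essential bounds and the Euler--Lagrange equation on all of $[0,1]$. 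This is a perfectly good argument, but it is precisely the substance you labelled ``routine''; without it the proof is incomplete. Once the essential bounds are in hand, the integration back to $K_\rho\varphi=\varphi$ is indeed exactly as you say.
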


\begin{proof}
To show that any minimizer of the variational problem \eqref{ssg} is a
fixed point of $K_{\rho}$ we use a dynamical argument.  Let
$\varphi\in \mc F$ be such that $0\le \epsilon\varphi_x\le 1$ and
consider the evolution equation
\begin{equation}
\label{gfG}
\begin{cases}
v_t = \varepsilon v_{xx} + v_x \, (1- \varepsilon v_x) \,
\big[ \rho - (1+e^v)^{-1} \big] \;, \\ 
v(t,0) =\varphi_0\;, \quad v(t,1) =\varphi_1\;, \\
v(0,\cdot )=\varphi (\cdot)\;.
\end{cases}
\end{equation}
Since $\rho$ is smooth, by classical results on uniformly parabolic
equations, the solution $v$ is smooth in $(0,\infty) \times [0,1]$.

The functional $\mc G_\varepsilon(\rho, \cdot)$ is a Lyapunov
functional for the evolution \eqref{gfG}. Indeed, for $t>0$ we have
\begin{equation*}
\frac {d}{dt} \mc G_\varepsilon(\rho, v(t)) \;=\; - \int_{0}^1 \,  
v_x \, (1- \varepsilon v_x)\, \Big[ 
\frac{\varepsilon v_{xx}}{v_x \, (1- \varepsilon v_x)} \;+\;
\rho \;-\; \frac 1{1+e^v} \Big]^2\, dx\;,
\end{equation*}
which shows that $\mc G_\varepsilon(\rho, v(t)) \le \mc
G_\varepsilon(\rho, \varphi) $ provided $\varphi$ is smooth.  By a
standard approximation argument, we then extend this inequality to any
$\varphi\in \mc F$.

Let $F=\rho - (1+e^v)^{-1}$ and $w=v_x$. Since $v_t(t,0) = v_t(t,1)
=0$, $t>0$, a simple computation shows that $w$ solves
\begin{equation*}
\left\{
\begin{aligned}
& w_t = \varepsilon w_{xx} + \big[w \, (1-\varepsilon w) \, F \big]_x  \\
& \varepsilon w_x(t,i) + w(t,i) \, 
[1-\varepsilon w(t,i)] \, F(t,i) = 0\;, \; i=0,1\;, \\
& w(0, \cdot) = \varphi_x (\cdot)\;.
\end{aligned}
\right.
\end{equation*}
Since $F_x$ is bounded on compact subsets of $(0,\infty)\times[0,1]$
and $\varphi_x$ is neither identically equal to
$0$ nor to $\varepsilon^{-1}$, Theorem 3.7 in
\cite{pw} and the remark (ii) following it, imply that $0< \varepsilon
w(t,x) <1$ for any $(t,x) \in (0,\infty)\times [0,1]$.  This proves
that $0< \varepsilon v_x <1$ in $(0,\infty)\times [0,1]$.

Let now $\varphi \in {\ms F}(\rho)$, i.e.\ $\varphi$ is a minimizer for
$\mc G_\varepsilon(\rho, \cdot)$. We deduce $\mc G_\varepsilon(\rho,
v(t))=\mc G_\varepsilon(\rho, \varphi)$, namely that for each $t\ge 0$
the function $v(t)$ is a minimizer for $\mc G_\epsilon(\rho,\cdot)$.
Since, for $t>0$, the function $v(t)$ is smooth and $0< \epsilon v_x
(t) < 1$, it satisfies the Euler-Lagrange equation \eqref{Deq}.  In
particular, $v(t)$ is a fixed point of $K_\rho$, that is $v(t) \in \ms
P (\rho)$.  Since $v(t)$ converges to $\varphi$ strongly in
$L^1([0,1])$ as $t\downarrow 0$, by taking the limit $t\downarrow 0$
in the equation $K_\rho v(t) = v(t)$, we conclude that $\varphi\in \ms
P(\rho)$.
\end{proof}

Since $\upbar\rho_\epsilon$ is smooth and strictly increasing, item
(iii) in Theorem \ref{t:unmine} implies that $\ms
P(\upbar\rho_\epsilon)$ is a singleton.  A simple computation shows
that $s'(\upbar{\rho}_\varepsilon)$ solves the differential equation
\eqref{Deq} for $\rho = \upbar{\rho}_\varepsilon$. By the previous
lemma, $s'(\upbar{\rho}_\varepsilon)$ is the unique minimizer of
$\mc G_\epsilon (\upbar{\rho}_\varepsilon ,\cdot)$.  Hence,
\begin{equation}
\label{f09}
S^o_\varepsilon(\upbar{\rho}_\varepsilon) 
\;=\; \inf_{\varphi \in \mc F}
\mc G_\varepsilon(\upbar{\rho}_\varepsilon,\varphi) \;=\; 
\mc G_\varepsilon(\upbar{\rho}_\varepsilon, s'(\upbar{\rho}_\varepsilon)) \;.  
\end{equation}

The previous lemma proves the first statement in Theorem~\ref{t:1-1}
for smooth functions $\rho$. The proof of the general result is based
in the following estimate.

\begin{lemma}
\label{t:int}  
Fix $\varphi_0<\varphi_1$ and $\epsilon\in (0,\epsilon_0)$.  There
exists $\delta\in (0,1/2)$ such that for any $\rho\in M$ and any $\varphi \in
{\ms F}(\rho)$ we have $ \delta\le \epsilon \varphi_x \le 1-\delta$
a.e.
\end{lemma}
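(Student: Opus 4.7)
The plan is to identify any minimizer $\varphi \in \ms F(\rho)$ as a solution of the Euler--Lagrange equation for $\mc G_\epsilon(\rho,\cdot)$ in integrated form, and then use the mass constraint built into $\mc F$ to bound the associated Lagrange multiplier uniformly in $\rho\in M$. It is convenient to view the minimization in terms of the density $\psi := \epsilon\varphi_x \in L^\infty([0,1])$, subject to $0\le\psi\le 1$ a.e.\ and $\int_0^1 \psi \, dx = \epsilon(\varphi_1-\varphi_0) = \epsilon/\epsilon_0$.

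First I would rule out $|\{\psi=0\}|>0$ and $|\{\psi=1\}|>0$ via a perturbation toward the linear function $\bar\varphi(x) := \varphi_0 + (\varphi_1-\varphi_0)x$: since $\epsilon\bar\varphi_x = \epsilon/\epsilon_0\in(0,1)$, the convex combination $\varphi_t := (1-t)\varphi + t\bar\varphi$ lies in $\mc F$ with finite $\mc G_\epsilon(\rho,\varphi_t)$ for every $t\in[0,1]$. Writing $E := \{\psi=0\}$ and supposing $|E|>0$, on $E$ one has $s(\epsilon(\varphi_t)_x) - s(\psi) = s(t\epsilon/\epsilon_0) \sim (t\epsilon/\epsilon_0)\log(t\epsilon/\epsilon_0)$ as $t\downarrow 0$, while convexity and boundedness of $s$ on $[0,1]$ control the off-$E$ contribution by $O(t)$; the remaining terms of $\mc G_\epsilon$ add $O(t)$ uniformly in $\rho\in M$. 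Hence $\mc G_\epsilon(\rho,\varphi_t) - \mc G_\epsilon(\rho,\varphi) \le |E|\, s(t\epsilon/\epsilon_0) + Ct$, which is negative for small $t$, contradicting minimality. The case $\{\psi=1\}$ is symmetric.

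With $0<\psi<1$ a.e., define $B_n := \{1/n<\psi<1-1/n\}$, so $|B_n|\to 1$. For $\zeta \in L^\infty$ supported in $B_n$ with $\int\zeta = 0$, the perturbation $\psi + t\zeta$ is admissible for $t < 1/(n\Vert\zeta\Vert_\infty)$. A direct computation identifies the functional derivative of $\mc G_\epsilon(\rho,\cdot)-\int s(\rho)\,dx$ with respect to $\psi$ as $s'(\psi) + \Psi$, where $\Psi(y) := \epsilon^{-1}\int_y^1 [(1+e^{\varphi(z)})^{-1} - \rho(z)] \, dz$, so the first-order condition yields $s'(\psi(y)) + \Psi(y) = \lambda_n$ a.e.\ on $B_n$ for a constant $\lambda_n$; the nested structure $B_n \subset B_{n'}$ forces $\lambda_n \equiv \lambda$, and hence $s'(\psi) = \lambda - \Psi$ a.e. The estimate $|(1+e^\varphi)^{-1}-\rho|\le 1$ yields $\Vert\Psi\Vert_\infty \le \epsilon^{-1}$ and $\Vert\Psi'\Vert_\infty \le \epsilon^{-1}$; inverting, $\psi(y) = e^{\lambda-\Psi(y)}/(1+e^{\lambda-\Psi(y)})$ is continuous, and the intermediate value theorem together with the mass constraint produces a point $y^\star$ with $\psi(y^\star) = \epsilon/\epsilon_0$. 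This gives $|\lambda|\le |s'(\epsilon/\epsilon_0)|+\epsilon^{-1}$, and therefore $|s'(\psi)| \le |s'(\epsilon/\epsilon_0)| + 2\epsilon^{-1} =: C$ a.e., which inverts to $\delta \le \psi \le 1-\delta$ with $\delta := (1+e^C)^{-1}$ depending only on $\varphi_0$, $\varphi_1$, and $\epsilon$.

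The hard step is the first one: the Euler--Lagrange analysis cannot be invoked as long as $\psi$ may take the boundary values $\{0,1\}$ on positive-measure sets, so one must argue by a direct perturbation exploiting the singular behavior $s(a)\sim a\log a$ near $a=0$ (and the symmetric behavior near $a=1$). The choice of the linear interpolation $\bar\varphi$ is critical, since $\epsilon\bar\varphi_x$ lies strictly inside $(0,1)$, which simultaneously ensures admissibility of $\varphi_t$ for every $t\in[0,1]$ and guarantees that the $t\log t$ contribution from the bad set overwhelms all the $O(t)$ corrections coming from the lower-order terms of $\mc G_\epsilon$.
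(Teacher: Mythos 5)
Your argument is correct, and it takes a genuinely different route from the paper's. The paper explicitly remarks that a direct competitor-construction argument was attempted and abandoned; it instead proves Lemma~\ref{t:int} indirectly, by first showing (Lemma~\ref{s02}, via a parabolic flow) that minimizers for \emph{smooth} $\rho$ are fixed points of $K_\rho$ and hence enjoy the uniform bounds of Proposition~\ref{t:fp}, then invoking Proposition~\ref{t:DL*} to identify $\ms F(\rho)$ with the sub-differential $\partial\hat\Lambda(\rho)$, and finally applying the Lanford--Robinson theorem (Theorem~\ref{t:lr}) to write a general tangent functional as a weak* limit of convex combinations of approximable ones, transporting the uniform bound by lower semicontinuity of $\psi\mapsto|\psi_x|_{L^\infty}$. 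Your proof bypasses all of that convex-analysis machinery with a two-step variational argument: ruling out $|\{\epsilon\varphi_x\in\{0,1\}\}|>0$ by exploiting the $a\log a$ singularity of $s$ against an $O(t)$ linear term under a convex perturbation toward $\bar\varphi$; and then localizing the first-order condition on exhausting sets $B_n$ where $s'(\psi)$ is bounded, fixing the Lagrange multiplier via the mass constraint $\int_0^1\psi\,dx=\epsilon/\epsilon_0$ and the continuity of $\psi=s'^{-1}(\lambda-\Psi)$, and inverting. This is simpler and more self-contained, and as a byproduct it directly yields the inclusion $\ms F(\rho)\subset\ms P(\rho)$ for all $\rho\in M$ without a smoothing argument. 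The trade-off is that the paper's longer route establishes the sub-differential identification $\partial\hat\Lambda(\rho)=\mathrm{co\:}\ms F(\rho)$ and the approximability structure as standalone facts, which are reused later (e.g.\ in Theorem~\ref{t:fHJ}), so they would still need separate proofs. One place to tighten your write-up: in the passage from $\int(s'(\psi)+\Psi)\zeta\,dx=0$ for all mean-zero $\zeta\in L^\infty(B_n)$ to constancy of $s'(\psi)+\Psi$ on $B_n$, say explicitly that $s'(\psi)+\Psi$ is bounded on $B_n$ so the test function $\zeta=(s'(\psi)+\Psi-\textrm{mean})\mathbf 1_{B_n}$ is admissible, and that two-sided admissibility of $\psi\pm t\zeta$ is what upgrades the variational inequality to an equality.
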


The most direct approach to prove this lemma would be by
contradiction.  Assuming the existence of a minimizer $\varphi$ with
derivative not bounded away from $0$ and $\epsilon^{-1}$, we would
need to construct a suitable function $\psi$ with derivative bounded
away from $0$ and $\epsilon^{-1}$ such that $\mc G_\epsilon(\rho,\psi)
< \mc G_\epsilon(\rho,\varphi)$.  Our attempts in this direction have
not however been successful and we shall prove Lemma~\ref{t:int} by an
indirect route based on a geometric characterization of the
minimizers, results from convex analysis, and Lemma~\ref{s02} which
implies the statement for smooth $\rho$.  Postponing this proof, we
show Theorem~\ref{t:1-1}.

\begin{proof}[Proof of Theorem~\ref{t:1-1}: the inclusion $\ms
  F(\rho)\subset \ms P(\rho)$]
Let $\rho\in M$ and $\varphi \in {\ms F} (\rho)$.  In view of
Lemma~\ref{t:int}, we easily deduce that $\varphi$ satisfies the
Euler-Lagrange equation \eqref{Deq} weakly, namely that for any
$\lambda\in C^\infty_0([0,1])$
\begin{equation*}
\big\langle \epsilon s'(\epsilon \varphi_x), \lambda_x \big\rangle  
+\Big\langle -\rho + \frac 1{1+e^\varphi} , \lambda\Big\rangle = 0\; .
\end{equation*}
We deduce there exists a constant $C$ such that for a.e.\ $x\in [0,1]$ 
\begin{equation*}
\epsilon s'\big(\epsilon\varphi_x(x) \big)
\;=\; C\; +\; \int_0^x \Big[ \frac 1{1+e^\varphi} -\rho\Big] \, dy\;.  
\end{equation*}
Recalling \eqref{g03} and that $A=A(\varphi)$ is chosen so that
$(K_{\rho}\varphi )(1)=\varphi_1$, it is straightforward to check that
$K_{\rho}\varphi =\varphi$.
\end{proof}

To conclude the proof of Theorem~\ref{t:1-1} it remains to prove the
continuity properties of the functional $S_\epsilon$. 
To this end, we consider the space $L^1([0,1])$ endowed with the weak
topology and, recalling \eqref{s=}, we let 
$\Lambda : L^1([0,1]) \to (-\infty,+\infty]$ be the lower semicontinuous
functional defined by
\begin{equation}
\label{dLe}
\Lambda (\varphi)  := 
\begin{cases}
\displaystyle 
\int_0^1 \big[ s(\epsilon\varphi_x) + \varphi 
- \log( 1+ e^\varphi) \big]\, dx 
& \textrm{if $\varphi\in\mc F$} \;, \\
+\infty
& \textrm{otherwise}\;,
\end{cases}
\end{equation}
where we understand that $\Lambda (\varphi)=+\infty$ unless
$\varphi$ is absolutely continuous, and the density of the measure
$d\phi$, denoted by $\varphi_x$, satisfies $0\le \epsilon \varphi_x
\le 1$ a.e.\ Note that the set $\{\varphi :\,\Lambda
(\varphi)<+\infty\}$ is compact in $L^1([0,1])$.  Recalling that we
consider $L^\infty([0,1])$ endowed with the weak* topology, the
Legendre transform of $\Lambda $ is defined as the function
$\Lambda^* : L^\infty([0,1])\to \bb R$ given by
\begin{equation}
\label{dL*e}
\Lambda^* (\rho)
:= \sup_{\varphi}\big\{\langle \rho,\varphi\rangle
-\Lambda (\varphi)\big\}\,,
\end{equation}
where the supremum is carried over all functions $\varphi$ in
$L^1([0,1])$. Recalling \eqref{s=}, we also let $\ms S :
L^\infty([0,1])\to (-\infty,+\infty]$ be the functional defined by
\begin{equation}
\label{dSs}
\ms S (\rho) := \int_0^1 s(\rho) \, dx\;.
\end{equation}
In view of \eqref{ssg}, the previous definitions imply
\begin{equation}
\label{S=S-L}
S^o_\epsilon  = \ms S  -  \Lambda^*\;,
\end{equation}
where we understand that $S^o_\epsilon$, defined in \eqref{ssg}, 
has been extended to a functional on $L^\infty([0,1])$ by setting 
$S^o_\epsilon(\rho) =+\infty$ for $\rho\not\in M$.

\begin{lemma}
\label{t:cont}
Fix $\varphi_0<\varphi_1$, $\epsilon\in (0,\epsilon_0)$ and a not
empty closed subset $K$ of $\mc F$. Let $\Lambda^*_K: M \to \bb R$ be
the functional defined by
\begin{equation*}
\Lambda^*_K (\rho) \;:=\:
\sup\: \big\{ \langle \rho, \varphi\rangle - \Lambda (\varphi)
\,,\: \varphi\in K  \big\} \;.
\end{equation*}
Then $\Lambda^*_K$ is continuous.
\end{lemma}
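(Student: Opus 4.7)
\emph{Plan for the proof of Lemma~\ref{t:cont}.} The plan is to establish lower and upper semicontinuity of $\Lambda^*_K$ separately. First, two structural facts will be used throughout. Since $\mc F$ is compact in the topology of weak convergence of measures and $K$ is a closed subset, $K$ is itself compact. Moreover, by the remark after the definition of $\mc F$, convergence $\varphi^n\to\varphi$ in $K$ implies a.e.\ convergence, and since every element of $\mc F$ is pointwise bounded in $[\varphi_0,\varphi_1]$, dominated convergence upgrades this to strong convergence in $L^1([0,1])$. In particular, for each fixed $\rho\in M\subset L^\infty$, the map $\varphi\mapsto\langle\rho,\varphi\rangle$ is continuous on $K$, so by the lower semicontinuity of $\Lambda$ the functional $\varphi\mapsto\langle\rho,\varphi\rangle-\Lambda(\varphi)$ is upper semicontinuous on the compact set $K$ and attains its supremum there.

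For lower semicontinuity of $\Lambda^*_K$, fix any $\varphi\in K$ with $\Lambda(\varphi)<+\infty$; since $\varphi\in L^1([0,1])$, the map $\rho\mapsto\langle\rho,\varphi\rangle-\Lambda(\varphi)$ is continuous in the weak* topology on $M$. Hence $\Lambda^*_K$, being a supremum over such continuous functions, is lower semicontinuous.

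For upper semicontinuity, let $\rho_n\to\rho$ in $M$ and, by the first paragraph, pick $\varphi_n\in K$ achieving $\Lambda^*_K(\rho_n)=\langle\rho_n,\varphi_n\rangle-\Lambda(\varphi_n)$. By compactness of $K$, along a subsequence (still indexed by $n$) one has $\varphi_n\to\varphi^*$ in $K$. The delicate step is the joint convergence $\langle\rho_n,\varphi_n\rangle\to\langle\rho,\varphi^*\rangle$, which cannot be read off directly from the weak* definition because the test function also varies. It is handled by the splitting
\begin{equation*}
\langle\rho_n,\varphi_n\rangle-\langle\rho,\varphi^*\rangle
=\langle\rho_n-\rho,\varphi^*\rangle+\langle\rho_n,\varphi_n-\varphi^*\rangle,
\end{equation*}
where the first term vanishes by the weak* convergence of $\rho_n$ tested against the fixed function $\varphi^*\in L^1([0,1])$, and the second is bounded by $\|\rho_n\|_\infty\|\varphi_n-\varphi^*\|_1\le\|\varphi_n-\varphi^*\|_1\to 0$ thanks to the $L^1$-convergence established in the first paragraph. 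Combining this with $\liminf_n\Lambda(\varphi_n)\ge\Lambda(\varphi^*)$ gives
\begin{equation*}
\limsup_n\Lambda^*_K(\rho_n)\le\langle\rho,\varphi^*\rangle-\Lambda(\varphi^*)\le\Lambda^*_K(\rho).
\end{equation*}
Applying this to every subsequence of an arbitrary sequence $\rho_n\to\rho$ yields $\limsup_n\Lambda^*_K(\rho_n)\le\Lambda^*_K(\rho)$, which together with the lower semicontinuity already shown completes the proof.

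The main (and really only) obstacle is the joint-convergence step, and its resolution rests entirely on the two structural facts singled out at the start: uniform pointwise boundedness of $\mc F$ (so that weak convergence of measures plus a.e.\ convergence improve to $L^1$-convergence) and compactness of $K$ (so that a limit point $\varphi^*\in K$ is available).
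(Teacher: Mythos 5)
Your proof is correct and follows essentially the same strategy as the paper's: lower semicontinuity via the supremum over continuous maps $\rho\mapsto\langle\rho,\varphi\rangle-\Lambda(\varphi)$, and upper semicontinuity by choosing near-optimal (in your case, optimal) $\varphi_n\in K$, extracting a convergent subsequence by compactness, and passing to the limit using the lower semicontinuity of $\Lambda$. The only difference is that you spell out the joint-convergence step $\langle\rho_n,\varphi_n\rangle\to\langle\rho,\varphi^*\rangle$ (via the two-term splitting and the upgrade of $\mc F$-convergence to $L^1$-convergence by dominated convergence), which the paper simply asserts; this is a welcome clarification but not a different route.
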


\begin{proof}
The continuity of the map $M\ni \rho \mapsto \langle \rho,
\varphi\rangle\in \bb R$ for a given $\varphi \in \mc F$ implies
immediately the lower semicontinuity of $ \Lambda^*_K$.  To prove the
upper semicontinuity, fix a sequence $\{\rho^n\} \subset M$ converging
to $\rho$.  Since $\Lambda^*_K (\rho^n) < \infty$, $n\ge 1$, there
exists a sequence $\{\varphi^n\}\subset K$ such that
\begin{equation*}
\lim_{n\to\infty} \Lambda^*_K (\rho^n) \;=\; 
\lim_{n\to\infty} \big\{ \langle \rho^n, \varphi^n\rangle - 
\Lambda (\varphi^n) \big\}\;.
\end{equation*}
Since $\mc F$ is compact, by taking a subsequence, if necessary, we
may assume that $\{\varphi^n\}$ converges in $\mc F$ to some
$\varphi\in K$.  As $\rho^n\to \rho $ in $M$ and $\varphi^n\to
\varphi$ in $\mc F$ imply $\langle \rho^n,\varphi^n\rangle \to \langle
\rho,\varphi \rangle$ and $\liminf_n \Lambda (\varphi^n) \ge \Lambda
(\varphi)$, we deduce
\begin{equation*}
\limsup_n  \, \Lambda^*_K  (\rho^n) \; \le \;  
\langle \rho, \varphi\rangle - \Lambda (\varphi)
\; \le \; \Lambda^*_K  (\rho)\;,
\end{equation*}
which is the desired upper semicontinuity.
\end{proof}

\begin{proof}[Proof of Theorem~\ref{t:1-1}: conclusion]
  Recall \eqref{dSs} and observe that the convexity of $s$ immediately
  implies the lower semicontinuity of $\ms S$ on $M$.  On the other
  hand, $\ms S : M\to \bb R$ is clearly continuous with respect to the
  strong topology of $L^1([0,1])$.  In view of the decomposition
  \eqref{S=S-L}, the proof of Theorem~\ref{t:1-1} is then concluded by
  applying Lemma~\ref{t:cont} with $K=\mc F$.
\end{proof}

In the sequel we shall need the following density result.  Recall the
definition of the set $M^o$ introduced in \eqref{Mo}.

\begin{lemma}
\label{s16} 
Fix $\rho$ in $M$. There exist a function $\varphi$ in $\ms F (\rho)$,
a sequence $\{\rho^n\}\subset M^o$, and a sequence
$\{\varphi^n\}\subset \mc F$, $\varphi^n \in \ms F (\rho^n)$, such
that $\rho^n\to \rho$ strongly in $L^1([0,1])$, $\varphi^n\to \varphi$
in the $C^1$ topology, and $\mc G_\varepsilon (\rho^n,\varphi^n) \to
\mc G_\epsilon (\rho,\varphi)$.
\end{lemma}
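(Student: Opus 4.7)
The plan is to approximate $\rho$ by smooth functions $\rho^n\in M^o$, pick minimizers $\varphi^n \in \ms F(\rho^n)$, use the uniform estimates of Proposition~\ref{t:fp}(ii) to extract a $C^1$-convergent subsequence, and finally identify the limit as a minimizer of $\mc G_\epsilon(\rho,\cdot)$ by combining the $L^1$-continuity of $S_\epsilon$ established in Theorem~\ref{t:1-1} with a direct passage to the limit inside $\mc G_\epsilon$.

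First I would construct $\rho^n \in M^o$ with $\rho^n\to \rho$ strongly in $L^1([0,1])$. A concrete recipe is to extend $\rho$ continuously past $[0,1]$ by the boundary values $\rho_0, \rho_1$, mollify, and then form a convex combination with the stationary solution $\upbar{\rho}_\epsilon$ with a weight $1/n$ to restore the exact boundary data and the strict inequalities $0<\rho^n<1$. For each $n$ the lower semicontinuity of $\mc G_\epsilon(\rho^n,\cdot)$ on the compact set $\mc F$ guarantees a minimizer $\varphi^n\in \ms F(\rho^n)$, and since $\rho^n$ is smooth, Lemma~\ref{s02} gives $\varphi^n\in \ms P(\rho^n)$.

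At this point the uniform estimates of Proposition~\ref{t:fp}(ii) become the workhorse: there exist $\delta\in(0,1/2)$ and $C<\infty$ independent of $n$ such that $\delta\le \epsilon \varphi^n_x\le 1-\delta$ and $\varphi^n_x$ is $C$-Lipschitz on $[0,1]$. Hence $\{\varphi^n\}$ and $\{\varphi^n_x\}$ are equibounded and equicontinuous, and Arzel\`a--Ascoli yields a subsequence such that $\varphi^n\to \varphi$ in $C^1$ with $\varphi\in\mc F$ and $\delta\le \epsilon \varphi_x\le 1-\delta$. It remains to show $\varphi\in \ms F(\rho)$ together with the convergence $\mc G_\epsilon(\rho^n,\varphi^n)\to \mc G_\epsilon(\rho,\varphi)$. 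By construction $\mc G_\epsilon(\rho^n,\varphi^n)=S^o_\epsilon(\rho^n)$, and the last statement of Theorem~\ref{t:1-1} yields $S^o_\epsilon(\rho^n)\to S^o_\epsilon(\rho)$. On the other hand, one can pass to the limit in each of the four integrals in the definition \eqref{Grf} of $\mc G_\epsilon$: the terms $\int s(\epsilon \varphi^n_x)\,dx$ and $\int \log(1+e^{\varphi^n})\,dx$ converge because $\varphi^n\to \varphi$ uniformly with $\epsilon \varphi^n_x$ trapped in $[\delta,1-\delta]$, the term $\int(1-\rho^n)\varphi^n\,dx$ converges because $\varphi^n\to \varphi$ uniformly and $\rho^n\to \rho$ in $L^1$, and, extracting a further subsequence with $\rho^n\to \rho$ a.e., dominated convergence together with the boundedness of $s$ on $[0,1]$ gives $\int s(\rho^n)\,dx\to \int s(\rho)\,dx$. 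Therefore $\mc G_\epsilon(\rho,\varphi)=S^o_\epsilon(\rho)$, so $\varphi\in \ms F(\rho)$, and the three convergences claimed in the lemma are established.

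The main technical point is not any single step but rather the uniform nature of the bounds in Proposition~\ref{t:fp}(ii): the fact that $\delta$ and $C$ are independent of $\rho$ is exactly what allows one to extract a $C^1$-convergent subsequence from $\{\varphi^n\}$, after which the passage to the limit in $\mc G_\epsilon$ becomes a routine application of uniform and dominated convergence. Without this $\rho$-independent control on $\varphi^n_x$ one would only obtain weak compactness, which would not be enough to pass to the limit in the nonlinear term $\int s(\epsilon\varphi^n_x)\,dx$.
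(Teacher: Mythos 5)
Your proof is correct and follows essentially the same route as the paper: approximate $\rho$ by smooth $\rho^n\in M^o$, pick $\varphi^n\in\ms F(\rho^n)\subset\ms P(\rho^n)$ via Lemma~\ref{s02}, use the $\rho$-independent bounds of Proposition~\ref{t:fp}(ii) with Ascoli--Arzel\`a to extract a $C^1$-convergent subsequence, then sandwich $\mc G_\epsilon(\rho,\varphi)$ between the direct passage to the limit in the four integrals and the $L^1$-continuity of $S^o_\epsilon$ to conclude $\varphi\in\ms F(\rho)$. The only inessential difference is that the paper also notes in passing that $\varphi\in\ms P(\rho)$ by taking the limit in $K_{\rho^n}\varphi^n=\varphi^n$, and it cites Lemma~\ref{t:cont} together with the strong-$L^1$ continuity of $\ms S$ rather than the packaged last statement of Theorem~\ref{t:1-1}, but these amount to the same thing.
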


\begin{proof}
Fix $\rho$ in $M$ and pick a sequence $\{\rho^n\}\subset M^o$
converging to $\rho$ strongly in $L^1([0,1])$. Since $\mc F$ is
compact and $\mc G_\varepsilon$ is lower semicontinuous, there exists
a sequence $\{\varphi^n\}\subset \mc F$ such that $\varphi^n\in {\ms
  F}(\rho^n)$. By Lemma~\ref{s02} we also have $\varphi^n\in \ms
P(\rho^n)$. Hence, by item (ii) in Proposition~\ref{t:fp} and
Ascoli-Arzel\`a theorem, $\{\varphi^n\}$ is a precompact sequence in
$C^1([0,1])$. In particular, by taking if necessary a subsequence,
there exists a function $\varphi$ in $\mc F\cap C^1([0,1])$ such that
$\varphi^n\to\varphi$ in $C^1([0,1])$.  The last statement and the
choice of $\{\rho^n\}$ imply $\mc G_\varepsilon (\rho^n,\varphi^n) \to
\mc G_\epsilon (\rho,\varphi)$.

It remains to show that $\varphi\in {\ms F}(\rho)$.  By taking the
limit $n\to\infty$ in the equation $K_{\rho^n}\varphi^n =\varphi^n$,
we readily deduce that $\varphi\in\ms P(\rho)$.  To show that
$\varphi$ belongs to ${\ms F}(\rho)$ we proceed as follows.  Since
$\varphi^n\in{\ms F}(\rho^n)$, Lemma~\ref{t:cont} and the continuity
of $\ms S$ with respect to the strong $L^1([0,1])$ topology imply $\mc
G_\varepsilon (\rho^n,\varphi^n) =S^o_\epsilon(\rho^n) \to
S^o_\epsilon(\rho)$.  Therefore, as $\mc G_\varepsilon
(\rho^n,\varphi^n) \to \mc G_\epsilon (\rho,\varphi)$, we deduce that
$S^o_\epsilon(\rho)= \mc G_\epsilon (\rho,\varphi)$, i.e.\ $\varphi\in
{\ms F}(\rho)$.
\end{proof}

\subsection*{Convexity considerations} 

Let $X$ and $X^*$ be vector spaces in duality with respect to
$\langle\cdot,\cdot\rangle$. We consider $X$ and $X^*$ respectively
endowed with the weak and weak* topology.  Recall that $f: X\to
(-\infty,+\infty]$ is \emph{G\^ateaux differentiable} at $x$ if there
exists $\ell\in X^*$ such that
\begin{equation*}
\lim_{\lambda \downarrow 0}\frac{1}{\lambda}
\big[f(x+\lambda v)-f(x)-\lambda \langle\ell,v\rangle\big]=0
\qquad \text{ for any } v\in X\;.
\end{equation*}
In such a case we denote $\ell$ by $D_\mathrm{G} f(x)$.  In general,
we define the \emph{G\^ateaux sub-differential} $D^-_\mathrm{G}f(x)$
and \emph{G\^ateaux super-differential} $D^+_\mathrm{G}f(x)$ of $f$ at
the point $x$ as the following, possibly empty, convex subsets of
$X^*$
\begin{equation}
\label{gsd}
\begin{aligned}
& D^+_\mathrm{G}f(x)
:=
\Big\{\ell\in X^*\,:\: \limsup_{\lambda\downarrow0}\frac{1}{\lambda}
\big[f(x+\lambda v)-f(x)-\lambda \langle\ell,v\rangle\big]\leq 0
\;\text{ for any } v\in X\Big\} \\
& D^-_\mathrm{G}f(x)
:=
\Big\{\ell\in X^*\,:\: \liminf_{\lambda\downarrow 0}\frac{1}{\lambda}
\big[f(x+\lambda v)-f(x)-\lambda \langle\ell,v\rangle\big]\geq 0
\;\text{ for any } v\in X\Big\}
\end{aligned}
\end{equation}
in which we understand $D^\pm_\mathrm{G}f (x) =\varnothing$ if
$f(x)=+\infty$. 

In the context of convex analysis, the \emph{sub-differential} of a
\emph{convex} function $f:X\to (-\infty,+\infty]$ at a point $x\in X$,
here denoted by $\partial f(x)$, is the set of linear functionals
$\ell\in X^*$ such that
\begin{equation*}
f(y) - f(x) \;\ge\; \langle \ell, y-x\rangle 
\quad \text{ for any } y\in X
\end{equation*}
in which we understand that $\partial f(x) =\varnothing$ if
$f(x)=+\infty$.  It is well-know, see e.g.\ \cite[Prop.~1.5.3]{ET},
that if $f$ is convex and G\^ateaux differentiable at $x$ then
$\partial f(x) =\{ D_\mathrm{G} f(x) \}$.  It is also simple to check
that for any convex function $f:X\to (-\infty,+\infty]$ the equality
$D^-_\mathrm{G} f (x) = \partial f(x)$ holds for all $x\in X$.

Recall from \eqref{Fer} that the set ${\ms F} (\rho)\subset \mc F$,
$\rho\in M$, represents the minimizers of $\mc G_\epsilon(\rho,
\cdot)$.  In view of definition \eqref{dLe}, the set ${\ms F} (\rho)$
coincides with the maximizers for the variational problem on the right
hand side of \eqref{dL*e}.  We here consider such variational problem
for $\rho\in L^\infty([0,1])$ and still denote by ${\ms F} (\rho)$ the
set of maximizers, i.e.\ ${\ms F} (\rho)= \mathrm{arg \,sup}\:\big\{
\langle\rho,\varphi\rangle - \Lambda(\varphi) \:,\, \varphi \in
L^1([0,1])\big\}$.  For each $\rho\in L^\infty([0,1])$, the set ${\ms
  F}(\rho)$ is a not empty compact subset of $\mc F$.  Given a set $A$
we denote by $\mathrm{co}\: A$ its convex hull.

\begin{proposition}
\label{t:DL*}
The functional $\Lambda^*: L^\infty([0,1]) \to \bb R$ defined in
\eqref{dL*e} is lower semicontinuous and convex. Moreover, for each
$\rho\in L^\infty([0,1])$ we have
\begin{equation*}
\begin{split}
& D^-_\mathrm{G} \Lambda^* (\rho) 
\; =\;  \partial \Lambda^* (\rho)  
\; =\;  \mathrm{co\:} {\ms F} (\rho)\;,
\\
&
D^+_\mathrm{G}\Lambda^* (\rho) \; =
\begin{cases}
\{\varphi\} 
& \textrm{if ${\ms F}(\rho)=\{\varphi\}$ for some $\varphi\in\mc
  F$}\;,   
\\
\varnothing 
& \textrm{otherwise}\;.
\end{cases}
\end{split}
\end{equation*}
In particular, $\Lambda^*$ is G\^ateaux differentiable at $\rho$ if
and only if ${\ms F}(\rho)$ is a singleton.
\end{proposition}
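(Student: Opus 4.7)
The plan is to leverage the general convex-analytic structure of the Legendre transform. Convexity and weak* lower semicontinuity of $\Lambda^*$ come for free: for each $\varphi \in L^1([0,1])$ the map $\rho \mapsto \langle \rho, \varphi \rangle - \Lambda(\varphi)$ is weak* continuous and affine, and $\Lambda^*$ is their pointwise supremum. The identity $D^-_\mathrm{G} \Lambda^*(\rho) = \partial \Lambda^*(\rho)$ is a standard property of convex functions: convexity implies that $\lambda \mapsto \lambda^{-1}[\Lambda^*(\rho + \lambda v) - \Lambda^*(\rho)]$ is non-decreasing on $(0,\infty)$, so the $\liminf$ in the definition \eqref{gsd} of $D^-_\mathrm{G}$ actually equals the infimum over $\lambda > 0$; specialising to $\lambda = 1$ recovers the convex sub-gradient inequality.

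To prove $\partial \Lambda^*(\rho) = \mathrm{co}\,{\ms F}(\rho)$, the inclusion $\supseteq$ is obtained directly: for $\varphi \in {\ms F}(\rho)$ and any $\rho' \in L^\infty([0,1])$,
\[
\Lambda^*(\rho') \;\ge\; \langle \rho', \varphi \rangle - \Lambda(\varphi) \;=\; \Lambda^*(\rho) + \langle \rho' - \rho, \varphi \rangle,
\]
so $\varphi \in \partial \Lambda^*(\rho)$; convexity of the sub-differential then yields $\mathrm{co}\,{\ms F}(\rho) \subseteq \partial \Lambda^*(\rho)$. For the reverse inclusion I would establish a Danskin-type formula for the one-sided directional derivative,
\[
(\Lambda^*)'(\rho; \eta) \;=\; \sup_{\varphi \in {\ms F}(\rho)} \langle \eta, \varphi \rangle,
\]
valid for every $\eta \in L^\infty([0,1])$. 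The $\ge$ bound follows at once from $\Lambda^*(\rho + \lambda \eta) \ge \langle \rho + \lambda \eta, \varphi \rangle - \Lambda(\varphi)$ applied with $\varphi \in {\ms F}(\rho)$. For the converse, picking $\varphi_\lambda \in {\ms F}(\rho + \lambda \eta)$, the compactness of $\mc F$ allows extraction of a subsequential limit $\varphi^* = \lim_k \varphi_{\lambda_k}$ in $\mc F$; the lower semicontinuity of $\Lambda$ (so that $\varphi \mapsto \langle \rho, \varphi \rangle - \Lambda(\varphi)$ is upper semicontinuous) combined with continuity of $\langle \eta, \cdot \rangle$ forces $\varphi^* \in {\ms F}(\rho)$ and delivers the $\le$ bound. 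Once this formula is in hand, the characterization $\partial \Lambda^*(\rho) = \{\varphi : \langle \eta, \varphi \rangle \le (\Lambda^*)'(\rho; \eta)\;\forall\, \eta\}$ available for any convex function, together with the Hahn--Banach separation theorem, identifies $\partial \Lambda^*(\rho)$ with the (closed) convex hull of ${\ms F}(\rho)$.

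For the super-differential I would invoke the general principle that for any convex function $f$, the set $D^+_\mathrm{G} f(\rho)$ is either empty or reduces to a single element equal to the G\^ateaux derivative: if $\varphi \in D^+_\mathrm{G} f(\rho)$, applying the defining inequality with $v$ and with $-v$ forces $(\Lambda^*)'(\rho; v) = \langle \varphi, v \rangle$ for every $v$, so $\Lambda^*$ is G\^ateaux differentiable at $\rho$. Thus $D^+_\mathrm{G}\Lambda^*(\rho)\neq\varnothing$ iff $\Lambda^*$ is G\^ateaux differentiable at $\rho$, iff $\partial \Lambda^*(\rho) = \mathrm{co}\,{\ms F}(\rho)$ is a singleton, iff (since ${\ms F}(\rho)\subseteq\mathrm{co}\,{\ms F}(\rho)$ is non-empty) ${\ms F}(\rho)$ itself is a singleton. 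I expect the main technical difficulty to lie in the $\le$ half of the Danskin formula --- essentially an upper hemicontinuity statement for the multifunction $\rho \mapsto {\ms F}(\rho)$ --- which however should follow routinely from the compactness of $\mc F$ and the lower semicontinuity of $\Lambda$.
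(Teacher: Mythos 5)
Your argument is correct, and it takes a genuinely different route from the paper. The paper proves $\partial\Lambda^*(\rho)=\mathrm{co}\,\ms F(\rho)$ by passing to the convex envelope $\Lambda^{**}$: it proves a separate elementary lemma (Lemma~\ref{t:ca}) stating that $\mathrm{co}\,\mathrm{arg\,inf}\,f=\mathrm{arg\,inf}\,f^{**}$ for coercive lower semicontinuous $f$, then invokes $\Lambda^*=\Lambda^{***}$ and the Fenchel--Moreau correspondence $\varphi\in\partial\Lambda^*(\rho)\Leftrightarrow\rho\in\partial\Lambda^{**}(\varphi)\Leftrightarrow\varphi\in\mathrm{arg\,sup}\{\langle\rho,\cdot\rangle-\Lambda^{**}\}$. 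You instead prove a Danskin-type formula for the one-sided directional derivative, $(\Lambda^*)'(\rho;\eta)=\max_{\varphi\in\ms F(\rho)}\langle\eta,\varphi\rangle$, and then read off the subdifferential from the standard characterization $\partial\Lambda^*(\rho)=\{\ell:\langle\ell,\eta\rangle\le(\Lambda^*)'(\rho;\eta)\ \forall\eta\}$ via the support-function/Hahn--Banach description of the closed convex hull. The two routes encode the same duality, but yours is more self-contained (no auxiliary convex-envelope lemma) and has the pleasant side effect that the final $D^+_\mathrm{G}$ claim drops out of the same directional-derivative formula: if $\ms F(\rho)$ is a singleton the formula is linear in $\eta$, giving differentiability directly, and if not, two distinct elements of $\ms F(\rho)$ produce $(\Lambda^*)'(\rho;\eta)+(\Lambda^*)'(\rho;-\eta)>0$ for some $\eta$, ruling out $D^+_\mathrm{G}\ne\varnothing$. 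The paper instead establishes $D^+_\mathrm{G}$ directly via an explicit estimate with $\varphi_\lambda\in\ms F(\rho+\lambda v)$ converging weakly to $\varphi$.

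Two small points to tighten. First, the compactness/upper-hemicontinuity step yielding $\varphi^*\in\ms F(\rho)$ in the $\le$ half of Danskin needs the pairing $\langle\rho+\lambda_k\eta,\varphi_{\lambda_k}\rangle\to\langle\rho,\varphi^*\rangle$; this uses that $\varphi_{\lambda_k}\to\varphi^*$ a.e.\ with a uniform bound (the weak convergence of $d\varphi$ on the compact $\mc F$ implies this), combined with strong $L^\infty$ convergence of $\rho+\lambda_k\eta$. Second, your support-function argument naturally identifies $\partial\Lambda^*(\rho)$ with the \emph{closed} convex hull $\upbar{\mathrm{co}}\,\ms F(\rho)$, whereas the statement reads $\mathrm{co}\,\ms F(\rho)$; one should note that $\mathrm{co}\,\ms F(\rho)$ is already closed here (the paper's Lemma~\ref{t:ca} has the same implicit reliance on this). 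Neither is a defect in the approach; both are routine to fill in.
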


To prove this statement we need the following elementary result from
convex analysis.  We say that $f: X \to (-\infty,+\infty]$ has
\emph{super-linear growth} iff for each affine function on $X$, i.e.\
a map $X\ni x \mapsto \langle\ell, x\rangle +\alpha \in\bb R$ for some
$\ell\in X^*$ and $\alpha\in \bb R$, there exists a compact
$K=K_{\ell,\alpha} \subset X$ such that $f(x)\ge\langle\ell, x\rangle
+\alpha$ for any $x\not\in K$.  Observe that if $f$ has super-linear
growth, then $f$ is coercive.

\begin{lemma}
\label{t:ca} 
Let $f: X \to (-\infty,+\infty]$ be lower semicontinuous with
super-linear growth.  Denote by $f^{**}$ the convex envelope of $f$,
i.e.\ the largest \emph{convex} and lower semicontinuous function
below $f$.  Then
\begin{equation*}
\mathrm{co\:} \, \mathrm{arg \,inf} \{ f(x)\,,\: x\in X \}
\;=\; \mathrm{arg \,inf} \{ f^{**}(x)\,,\: x\in X \}\;.
\end{equation*}
\end{lemma}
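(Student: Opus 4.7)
The plan is to prove both inclusions between $C:=\mathrm{co}\,A$ and $B:=\mathrm{arg\,inf}\,f^{**}$, where $A:=\mathrm{arg\,inf}\,f$; here $\mathrm{co}$ must mean the closed convex hull since the right-hand side $B$ is automatically closed. The easy direction starts from $\inf_X f=\inf_X f^{**}$: super-linear growth applied to the zero affine function forces $c:=\inf_X f$ to be finite, and the constant function $c$ is a convex lsc minorant of $f$, hence is also a minorant of $f^{**}$. Therefore $A\subset B$, and since $B=\{f^{**}\le c\}$ is a sublevel set of a convex lsc function it is closed and convex, giving $C\subset B$.

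For the reverse inclusion $B\subset C$, I would argue by contradiction. Suppose $x\in B\setminus C$. Since $A$ is nonempty and compact (as a sublevel set of the coercive lsc function $f$), $C$ is closed, convex, and does not contain $x$, so Hahn-Banach produces $\ell\in X^*$ with $\langle\ell,x\rangle>M:=\sup_{y\in A}\langle\ell,y\rangle$. I would then construct an affine continuous function $\phi\le f$ with $\phi(x)>c$, which contradicts the Fenchel-Moreau characterization of $f^{**}$ (available because $f$ is lsc and bounded below) as the pointwise supremum of its affine continuous minorants, combined with $f^{**}(x)=c$. The natural candidates are as follows: for $\alpha\ge 0$, super-linear growth guarantees that $f^*(\alpha\ell):=\sup_y\{\alpha\langle\ell,y\rangle-f(y)\}$ is finite and attained at some $y_\alpha$, so $\phi_\alpha(y):=\alpha\langle\ell,y\rangle-f^*(\alpha\ell)$ is affine continuous with $\phi_\alpha\le f$ and $\phi_0(x)=-f^*(0)=c$. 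It therefore suffices to show that the right derivative at $\alpha=0$ of the convex function $\alpha\mapsto f^*(\alpha\ell)$ equals $M$: this yields $\phi_\alpha(x)-c=\alpha(\langle\ell,x\rangle-M)+o(\alpha)>0$ for all sufficiently small $\alpha>0$, as desired.

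The main technical obstacle will be this derivative computation. The bound $\ge M$ is immediate from the lower bound $f^*(\alpha\ell)\ge\alpha M-c$ obtained by testing against points in $A$. For the matching upper bound, comparing $y_\alpha$ with a point of $A$ achieving the supremum $M$ yields $f(y_\alpha)-c\le\alpha[\langle\ell,y_\alpha\rangle-M]$ (so in particular $\langle\ell,y_\alpha\rangle\ge M$ whenever $y_\alpha\notin A$), while super-linear growth applied to $f$ itself, together with the uniform-in-$\alpha\in[0,1]$ bound $\alpha\langle\ell,y_\alpha\rangle-f(y_\alpha)\ge-c$, confines the family $\{y_\alpha\}_{\alpha\in[0,1]}$ to a precompact set. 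Lower semicontinuity of $f$ then forces any cluster point as $\alpha\downarrow 0$ to lie in $A$, and continuity of $\ell\in X^*$ gives $\lim_{\alpha\downarrow 0}\langle\ell,y_\alpha\rangle=M$. Rewriting $(f^*(\alpha\ell)-f^*(0))/\alpha=\langle\ell,y_\alpha\rangle-(f(y_\alpha)-c)/\alpha$ and using $0\le(f(y_\alpha)-c)/\alpha\le\langle\ell,y_\alpha\rangle-M\to 0$, the right derivative is seen to equal $M$, which closes the contradiction and establishes $B\subset C$.
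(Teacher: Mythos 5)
Your proof is correct, and it takes a genuinely different route from the paper's. The paper works in the epigraph space $X\times\bb R$: after assuming $\inf f=0$ it uses the characterization $\mathrm{epi}\, f^{**}=\mathrm{co\:}\mathrm{epi}\, f$ and separates $(\upbar x,0)$ from $\mathrm{epi}\, f$ by an explicit closed half-space $\{(x,t):\langle\ell,x\rangle+\lambda t\ge\alpha\}$, where the slope $\lambda$ is tuned using the compact $K$ provided by super-linear growth together with the positive quantity $m=\inf\{f(x):x\notin A\}$. Your argument instead stays in $X$: you invoke Fenchel--Moreau to characterize $f^{**}$ as the supremum of affine continuous minorants, and the crux becomes the fact that the convex function $g(\alpha):=f^*(\alpha\ell)$ has right derivative $g'_+(0)=M=\sup_{A}\langle\ell,\cdot\rangle$, whence the affine minorant $\phi_\alpha(y)=\alpha\langle\ell,y\rangle-g(\alpha)$ eventually exceeds $c=\inf f$ at $\upbar x$. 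What the student's approach buys is a cleaner conceptual statement (the one-sided derivative of the restricted conjugate equals the support function of $\mathrm{arg\,inf}\,f$ in the chosen direction) and it avoids the slope calibration in $X\times\bb R$; what the paper's approach buys is a self-contained geometric argument that does not appeal to Fenchel--Moreau or to any argmax convergence.

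Two small points worth noting. First, you are right that $\mathrm{co}$ should be read as the closed convex hull; the paper's proof silently assumes $\mathrm{co}\,A$ is compact (hence closed), which is not automatic in the generality of the lemma's statement, though it holds in the paper's application where everything lives inside the compact convex set $\mc F$. Second, your displayed uniform lower bound $\alpha\langle\ell,y_\alpha\rangle-f(y_\alpha)\ge -c$ is only correct when $M\ge 0$; in general one gets $\ge\alpha M - c\ge -c-|M|$ on $\alpha\in[0,1]$, and one should then apply super-linear growth to the two affine functions $\pm\ell$ plus the constant $c+|M|+1$ to trap $\{y_\alpha\}_{\alpha\in[0,1]}$ in a fixed compact. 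This is exactly the spirit of what you wrote and does not affect the structure of the argument.
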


\begin{proof}
The inclusion $ \mathrm{co\:} \mathrm{arg \,inf} \{ f(x),\,x\in X \}
\subset \mathrm{arg \,inf} \{ f^{**}(x),\, x\in X\}$ is trivial; to
prove the converse we shall argue by contradiction and assume, with no
loss of generality, that $\inf f = 0$.  Recall that the
\emph{epigraph} of $f$ is the subset of $X\times \bb R$ given by
$\mathrm{epi\:}f := \{ (x,t) :\, f(x) \le t\}$.  The convex envelope
$f^{**}$ is then characterized by the identity $\mathrm{co\:}
\mathrm{epi\:} f = \mathrm{epi\:}f^{**}$, see e.g.\
\cite[Proposition~1.3.2]{ET}.

Assume, by contradiction, that there exists $\upbar{x}\in \mathrm{arg
  \,inf} \{ f^{**}(x),\,x\in X \}$ such that $\upbar{x}\not\in
\mathrm{co\:} \: \mathrm{arg \,inf} \{ f(x),\,x\in X \}=: C$.  Since
$f$ is coercive and lower semicontinuous, its sub-level sets, and
therefore $C$, are compact. By the Hahn-Banach theorem there exist
$\alpha\in \bb R$ and $\ell \in X^*$ such that $C \subset \{ x:\,
\langle \ell, x\rangle > \alpha\}$ and $\upbar{x} \in \{ x:\, \langle
\ell, x\rangle < \alpha\}$.  Since $C$ is compact, we can find an open
neighborhood $A$ of $C$ such that $A \subset \{ x:\, \langle \ell,
x\rangle > \alpha\}$. Since $f$ has super-linear growth, we can also
find a compact $K$ such that for any $x\not\in K$ we have $f(x)\ge -
\langle\ell,x\rangle +\alpha$. Let now
\begin{equation*}
m:= \inf \big\{ f(x)\,,\: x\not\in A\big\} >0 \,,
\quad\qquad 
M := \sup\big\{ |\langle\ell,x\rangle|\,,\; x \in K \big\} <
\infty\,,
\end{equation*}
and set $\lambda := \max\big\{1\,,\, (M+\alpha)/m\big\} \in
[1,\infty)$. Consider the following half-spaces in $X\times \bb R$
\begin{equation*}
    \Pi_+ := 
    \big\{ (x,t)\,:\: 
    \langle\ell,x\rangle + \lambda\, t \ge \alpha 
    \big\} 
    \quad\;
    \Pi_- := 
    \big\{ (x,t)\,:\: 
    \langle\ell,x\rangle + \lambda\, t \le \alpha 
    \big\}    
\end{equation*}
and observe that $(\upbar{x},0)$ belongs to the interior of $\Pi_-$. 
It is also easy to check that $\mathrm{epi\:} f \subset \Pi_+$.
Therefore, as $\mathrm{co\:} \mathrm{epi\:} f $ is the intersection of
all the half-spaces containing $\mathrm{epi\:} f $, we deduce that 
$(\upbar{x},0)\not\in \mathrm{co\:} \mathrm{epi\:} f $ which yields 
the desired contradiction.
\end{proof}

\begin{proof}[Proof of Proposition~\ref{t:DL*}]
The lower semicontinuity and convexity of $\Lambda^*$ follows
trivially from its definition.  
Given $\rho\in L^\infty([0,1]$, set
\begin{equation*}
\widetilde {\ms F} (\rho) := \mathrm{arg \, sup} \;
\big\{ \langle\rho,\varphi \rangle - \Lambda^{**} (\varphi) 
\:,\, \varphi \in L^1([0,1]) \big\}
\end{equation*}
where, we recall, $\Lambda^{**}$ denotes the convex envelope of
$\Lambda$.  As $\{\varphi\in L^1([0,1])\,:\: \Lambda
(\varphi)<\infty\}$ is compact, $\Lambda$ has super-linear growth and
Lemma~\ref{t:ca} yields $\widetilde {\ms F}(\rho)=\mathrm{co\:} {\ms
  F}(\rho)$.  On the other hand, by e.g.\ \cite[Prop.~1.4.1 and
Cor.~1.4.1,]{ET},
\begin{equation*}
\Lambda^*(\rho)\;=\;\Lambda^{***}(\rho)
\;:=\; \sup_{\varphi} \big\{ \langle\rho,\varphi \rangle -
\Lambda^{**}(\varphi) \big\} \;.
\end{equation*}
Since we are now dealing with Legendre duality between convex
functions, by e.g.\ \cite[Prop.~1.5.1 and Cor.~1.5.2]{ET}, $\varphi\in
\partial \Lambda^*(\rho)$ iff $\rho\in \partial \Lambda^{**}
(\varphi)$ iff $\varphi\in\widetilde{\ms F} (\rho)$.  This concludes
the proof of the equality $\partial \Lambda^* (\rho) = \mathrm{co\:}
{\ms F} (\rho)$.
  
We claim that if ${\ms F}(\rho)=\{\varphi\}$ for some $\varphi\in\mc
F$ then $\varphi\in D^+_\mathrm{G} \Lambda^* (\rho)$.  This statement
completes the proof of the proposition.  Indeed, if $D^+_\mathrm{G}
\Lambda^* (\rho) \neq \varnothing $ and $D^-_\mathrm{G} \Lambda^*
(\rho) \neq \varnothing$ then $\Lambda^*(\rho) $ is necessary
G\^ateaux differentiable at $\rho$.

To prove the claim, given $v\in L^\infty([0,1])$ and $\lambda>0$, pick
$\phi_\lambda \in {\ms F}(\rho+\lambda v)$. By the very definition of
$\Lambda^*(\rho)$,
\begin{equation*}
\Lambda^*(\rho + \lambda v) - \Lambda^*(\rho) 
-\lambda \langle \varphi,v\rangle 
\;\le\; \lambda \langle \varphi_\lambda -\varphi, v\rangle\;.
\end{equation*}
The proof will therefore be completed once we show that any element in
${\ms F}(\rho+\lambda v)$ converges, as $\lambda\downarrow 0$, weakly
in $L^1([0,1])$ to $\varphi$. Since we assumed ${\ms F}(\rho)$ to be a
singleton, this is a straightforward consequence of the lower
semicontinuity of $\Lambda$ and the compactness of $\mc F$.
\end{proof}

The proof of Lemma~\ref{t:int} is basically achieved by
Lemma~\ref{s02}, Proposition~\ref{t:DL*}, and the following general
result in convex analysis, which is proven in \cite{LR}.  Let $B$ be a
\emph{separable} Banach space and $f: B \to \bb R$ a continuous convex
function. Given $x\in B$ and $\ell\in \partial f(x)$ we say that
$\ell$ is \emph{approximable by unique tangent functionals} iff there
exists a sequence $\{x_n\}\subset B$ converging (in norm) to $x$ such
that $\partial f(x_n) =\{\ell_n\}$ for some $\ell_n\in B^*$ and
$\ell_n\to \ell$ in the weak* topology of $B^*$. The collection of
elements in $\partial f(x)$ approximable by unique tangent functionals
is denoted by $\partial_{\mathrm{app}} f (x)$.

\begin{theorem}
\label{t:lr}
Let $B$ be a separable Banach space and $f:B \to \bb R$ be a
continuous convex function. Then for each $x\in B$
\begin{equation*}
\partial f(x) = \upbar{\mathrm{co}} \; \partial_{\mathrm{app}} f
(x)\;, 
\end{equation*}
where $\upbar{\mathrm{co}}$ denotes the weak* closure of the convex
hull.
\end{theorem}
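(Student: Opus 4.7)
My plan is to split the equality into two inclusions and handle them in very different ways. The easy inclusion $\upbar{\mathrm{co}}\,\partial_{\mathrm{app}} f(x) \subseteq \partial f(x)$ is a soft argument, while the nontrivial one $\partial f(x) \subseteq \upbar{\mathrm{co}}\,\partial_{\mathrm{app}} f(x)$ requires combining a Hahn--Banach separation in the weak* topology with the density of Gâteaux differentiability points, which is where separability of $B$ enters crucially.

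For the easy inclusion, note that $\partial f(x)$ is convex and weak* closed, being an intersection of weak* closed half-spaces. So it suffices to show $\partial_{\mathrm{app}} f(x) \subseteq \partial f(x)$. Given $\ell = \textrm{w*-}\lim_n \ell_n$ with $\ell_n = D_{\mathrm{G}} f(x_n)$ and $x_n \to x$ in norm, we have $\ell_n \in \partial f(x_n)$, hence $f(y) - f(x_n) \ge \langle \ell_n, y - x_n \rangle$ for every $y \in B$. Letting $n\to\infty$, using the continuity of $f$, the norm convergence $x_n \to x$, and the weak* convergence $\ell_n \to \ell$ (which handles $\langle \ell_n, y\rangle \to \langle \ell, y\rangle$; the term $\langle \ell_n, x_n\rangle$ is controlled via the local boundedness of $\{\ell_n\}$ that follows from continuity of $f$), we conclude $\ell \in \partial f(x)$.

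For the hard inclusion, I would argue by contradiction. Suppose $\ell_0 \in \partial f(x)$ but $\ell_0 \notin C := \upbar{\mathrm{co}}\,\partial_{\mathrm{app}} f(x)$. Since $C$ is weak* closed and convex, and the topological dual of $(B^*, \text{w*})$ is $B$ itself, Hahn--Banach yields $v \in B$ and $\alpha \in \bb R$ such that $\langle \ell_0, v\rangle > \alpha$ and $\langle \ell, v \rangle < \alpha$ for all $\ell \in C$. In particular, the one-sided derivative satisfies
\begin{equation*}
g'_+(0) := \lim_{t\downarrow 0} \frac{f(x+tv) - f(x)}{t}
\;\ge\; \langle \ell_0, v\rangle \;>\; \alpha.
\end{equation*}
The plan is now to produce a point $\ell \in \partial_{\mathrm{app}} f(x)$ with $\langle \ell, v \rangle \ge g'_+(0) > \alpha$, contradicting $\langle \ell, v\rangle < \alpha$ for $\ell \in C$. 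To this end, invoke Mazur's theorem: on a separable Banach space, any continuous convex function is Gâteaux differentiable on a dense $G_\delta$ set. Using this, construct a sequence $x_n \to x$ of Gâteaux differentiability points for $f$ with unique tangent functional $\ell_n$, selected so that $\langle \ell_n, v \rangle$ approaches the one-sided directional derivative $g'_+(0)$. Concretely, one can approximate $x$ along the direction $v$ by differentiability points $x_n = x + t_n v + r_n$ with $t_n \downarrow 0$ and $\|r_n\|/t_n \to 0$, and use the monotonicity of difference quotients of $f$ to extract $\liminf_n \langle \ell_n, v \rangle \ge g'_+(0)$. By the local boundedness of $\partial f$ near $x$, the sequence $\{\ell_n\}$ lies in a weak*-compact ball, so (using separability of $B$ to guarantee metrizability of this ball in the weak* topology) it admits a weak* convergent subsequence; its limit $\ell$ belongs to $\partial_{\mathrm{app}} f(x)$ and satisfies $\langle \ell, v\rangle \ge g'_+(0) > \alpha$, producing the contradiction.

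The main obstacle is the delicate construction of the differentiability points $x_n$ in the second step: one must ensure simultaneously that $x_n \to x$ in norm, that $f$ is Gâteaux differentiable at each $x_n$ (a condition holding only on a dense set, not everywhere), and that the unique tangent functional $\ell_n = D_{\mathrm{G}} f(x_n)$ captures enough of the directional behavior of $f$ at $x$ in the distinguished direction $v$ to force $\liminf_n \langle \ell_n, v \rangle \ge g'_+(0)$. This is precisely the content of the Lindenstrauss--Preiss/Mazur circle of ideas on differentiability of convex functions on separable spaces, and it is why the separability hypothesis cannot be relaxed.
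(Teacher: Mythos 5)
The paper does not prove this theorem; it states it and cites Lanford--Robinson \cite{LR}, so there is no in-paper argument to compare against. Your proof is, however, correct and is essentially the standard route (and, as far as I can tell, also the one taken in \cite{LR}): the easy inclusion follows from the fact that $\partial f(x)$ is a weak*-closed convex set together with the local Lipschitz bound that lets you pass to the limit in $f(y)-f(x_n)\ge\langle\ell_n,y-x_n\rangle$; the hard inclusion combines Hahn--Banach separation in $(B^*,\mathrm{w}^*)$, Mazur's density of G\^ateaux differentiability points (this is where separability of $B$ enters), the monotonicity of convex difference quotients in the distinguished direction $v$ to force $\liminf_n\langle\ell_n,v\rangle\ge g'_+(0)$, and Banach--Alaoglu plus weak*-metrizability of bounded sets (separability again) to extract a weak*-convergent subsequence landing in $\partial_{\mathrm{app}}f(x)$. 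The one place you leave slightly implicit is the quantitative step that, writing $x_n=x+t_nv+r_n$ with $\|r_n\|/t_n\to 0$ and using that $\|\ell_n\|\le L$, one gets $\langle\ell_n,v\rangle\ge\dfrac{f(x+t_nv)-f(x)}{t_n}-\dfrac{L\|r_n\|}{t_n}-\dfrac{|\langle\ell_n,r_n\rangle|}{t_n}\to g'_+(0)$; spelling this out closes the gap and the rest of the argument is sound. In short, a correct self-contained proof of a result the paper merely cites.
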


As the previous theorem requires to work in a separable Banach space,
we extend the functional $\Lambda^*: L^\infty([0,1])\to \bb R$, as
defined in \eqref{dL*e}, to a functional on the space $L^1([0,1])$
endowed with the strong topology.  To avoid ambiguities we shall
denote the extended functional by $\hat \Lambda$.  In other words, we
let $\hat\Lambda : \big(L^1([0,1]),\textrm{strong}\big) \to \bb R$ be
the functional defined by
\begin{equation}
\label{dhLe}
\hat\Lambda (\rho) := 
\sup_\varphi \big\{ \langle \rho, \varphi\rangle - 
\Lambda (\varphi) \big\}
\end{equation}
where the supremum is carried over all $\varphi$ in $L^1([0,1])$ such
that $\Lambda (\varphi)< \infty$.

Clearly, $\hat\Lambda$ is convex and, by the argument used in
Lemma~\ref{t:cont}, continuous with respect to the strong topology of
$L^1([0,1]$.  We claim that for each $\rho\in L^\infty([0,1])$ we have
\begin{equation}
\label{f18}
\partial \hat\Lambda (\rho) \;=\;
\partial \Lambda ^* (\rho)\; . 
\end{equation}
By the previous identity we mean that any element in $\partial
\Lambda^*(\rho)$, which a priori belongs only to $L^1([0,1])$, belongs
also to $L^\infty([0,1])$ and it is an element in $\partial
\hat\Lambda (\rho)$. Indeed, on the one hand, it follows from the
definition of sub-differentials that $\partial \hat\Lambda (\rho)
\subset \partial \Lambda^* (\rho)$ for each $\rho\in L^\infty([0,1])$.
The reverse inclusion follows from the definition of
sub-differentials, the fact, proven in Proposition~\ref{t:DL*}, that
$\partial \Lambda^* (\rho) = \mathrm{co \:} {\ms F} (\rho)$ and the
continuity of $\hat\Lambda$ with respect to the strong topology of
$L^1([0,1])$.

\begin{proof}[Proof of Lemma~\ref{t:int}]
We shall consider the integral operator $K_\rho:\mc F\to \mc F$, as
defined in \eqref{g03}, for $\rho\in L^1([0,1])$ instead of
$L^\infty([0,1])$. To avoid ambiguities, denote by $\hat{\ms P}(\rho)$
the set of $\varphi\in \mc F$ which are fixed points of $K_{\rho}$,
$\rho\in L^1([0,1])$. By the proof of Proposition~\ref{t:fp},
$\hat{\ms P}(\rho)$ is not empty and any $\varphi \in \hat{\ms
  P}(\rho)$ belongs to $C^1([0,1])$.  Furthermore, there exists
$\delta >0$ depending on $\varphi_0<\varphi_1$, $\epsilon\in
(0,\epsilon_0)$, and $|\rho|_{L^1}$ such that any $\varphi \in
\hat{\ms P}(\rho)$ satisfies the bound $\delta \le \epsilon \varphi_x
\le 1-\delta$.  We stress that $\delta$ depends on $\rho$ only via
$|\rho|_{L^1}$.

We claim that if $\rho\in L^1([0,1])$ is such that $\partial \hat
\Lambda (\rho)= \{\varphi\}$ for some $\varphi \in L^\infty([0,1])$,
then $\varphi \in \hat{\ms P} (\rho)$.  Postponing the proof of this
claim, we first conclude the proof of the lemma.  Fix $\rho\in
L^\infty([0,1])$ and $\varphi\in \partial_{\mathrm{app}} \hat
\Lambda(\rho)$. By definition, there exists a sequence
$\{\rho^n\}\subset L^1([0,1])$ converging to $\rho$ strongly in
$L^1([0,1])$ such that $\partial \hat \Lambda(\rho^n)=\{\varphi^n\}$
and $\varphi^n\to\varphi$ weak* in $L^\infty([0,1])$.  In view of the
previous claim $\varphi^n\in C^1([0,1])$ and there exists
$\delta_n>0$, depending only on $|\rho^n|_{L^1}$, such that $\delta_n
\le \epsilon \varphi^n_x \le 1 -\delta_n$. Since $\rho^n$ converges to
$\rho$ in $L^1([0,1])$, $\delta = \min \{\delta_n : n\ge 1\} >0$.  By
duality, it is readily shown that the map $\psi\mapsto
|\psi_x|_{L^\infty}$ is lower semicontinuous with respect to the weak*
convergence in $L^\infty([0,1])$.  Therefore $|\epsilon
\varphi_x|_{L^\infty} \le 1-\delta$ and, by the same argument,
$|1-\epsilon\varphi_x|_{L^\infty} \le 1-\delta$, that is, $\delta\le
\epsilon\varphi_x\le 1-\delta$ a.e.

Lemma~\ref{t:int} thus holds for $\varphi$ in $\partial_{\mathrm{app}}
\hat \Lambda(\rho)$. Fix now $\rho \in L^\infty([0,1])$,
$|\rho|_{L^\infty} \le 1$, and $\varphi\in \ms F(\rho)$. By
Proposition \ref{t:DL*} and by \eqref{f18}, $\varphi$ belongs to
$\partial\Lambda^*(\rho) = \partial\hat\Lambda(\rho)$.  Hence, by
Theorem~\ref{t:lr} and by the first part of the proof, there exists a
sequence $\{\psi^n\}$, $\psi^n$ convex combinations of elements in
$\partial_{\mathrm{app}} \hat \Lambda(\rho)$, such that $\delta\le
\epsilon\psi^n_x\le 1-\delta$ a.e. for some $\delta>0$ and $\psi^n$
converges in the weak* topology of $L^\infty([0,1])$ to $\varphi$.  To
conclude, it remains to recall the lower semicontinuity of
$\psi\mapsto |\psi_x|_{L^\infty}$ with respect to the weak*
convergence in $L^\infty([0,1])$.

We turn now to the proof of the claim.  Pick a sequence of smooth
functions $\{\rho^n\}$ converging to $\rho$ strongly in $L^1([0,1])$
and choose $\varphi^n\in{\ms F}(\rho^n)$.  By Proposition~\ref{t:DL*}
and \eqref{f18}, $\varphi^n \in \partial \Lambda^* (\rho^n)= \partial
\hat \Lambda (\rho^n)$.  On the other hand, by Lemma~\ref{s02}, as
$\rho^n$ is smooth, $\varphi^n\in \hat{\ms P}(\rho^n)= \ms P(\rho^n)$.
In particular $\{\varphi^n\}\subset \mc F$. By taking, if necessary, a
subsequence, the compactness of $\mc F$ now yields the existence of
$\psi\in \mc F$ such that $\varphi^{n}\to \psi$ in $\mc F$.  We can
thus take the limit $n\to\infty$ in the equation $\varphi^n =
K_{\rho^n} \varphi^n$ and conclude that $\psi\in \hat{\ms P} (\rho)$.
Finally, by using the definition of sub-differentials, we easily
deduce that $\psi \in
\partial \hat \Lambda (\rho)$.  Since we assumed $\partial \hat
\Lambda (\rho)= \{\varphi\}$, we conclude $\psi=\varphi$.
\end{proof}

\section{The quasi-potential}
\label{sec4}

Relying on the properties of the static variational problem presented
in the previous section, we prove here the main result namely, the
identity between the quasi potential $V_\epsilon$ and the functional
$S_\epsilon$. The proof is organized as follows.  We first prove the
equality $\hat V_\epsilon = V_\epsilon$.  We then show that the
algorithm presented below the statement of Theorem~\ref{t:S=V}
provides a legal test path for the variational problem \eqref{qp}. By
elementary computations, basically the ones described in the context
of Hamiltonian formalism, we deduce the inequality $\hat V_\epsilon
\le S_\epsilon$.  Finally, by using the variational definition
\eqref{e:2.6} of the action functional, we prove the inequality
$V_\epsilon\ge S_\epsilon$ by exhibiting a suitable test function $H$.
In these arguments, all the computations will be performed for smooth
paths and we will use density results to extend the bounds to
arbitrary paths.

\subsection*{The identity $\hat V_\epsilon = V_\epsilon $.}

We prove in this subsection that the variational problems \eqref{qp-1}
and \eqref{qp} are equivalent.  We emphasize that this statement does
not depend on the specific form of the flux $f$ and the mobility
$\sigma$.  The argument relies on two results.  The first one asserts
that if the action of a path $u$ in $\mc U(\upbar{\rho}_\varepsilon)$
is finite, then there exists a sequence $t_n\to -\infty$ such that
$u(t_n)-\upbar{\rho}_\varepsilon$ converges to $0$ in $\mc H^1_0$,
recall the notation for the Sobolev spaces introduced before the
statement of Theorem \ref{s01}.  The second one asserts that if a
function $\rho\in M$ is such that $\rho-\upbar{\rho}_\varepsilon$ is
small in $\mc H^1_0$, then there exists a path in a time interval of
length one which connects $\upbar{\rho}_\varepsilon$ to $\rho$ and has
small action.  In this subsection we drop the subscript $\varepsilon$
also in the stationary solution $\upbar{\rho}_\varepsilon$ and denote
its derivative by $\upbar{\rho}_x$.

\begin{lemma}
\label{s04}
Fix $T>0$ and a path $u$ in $C([-T,0], M)$ such that
$I_{[-T,0]}(u) < \infty$. Then,
\begin{equation*}
\begin{split}
& \frac \varepsilon 8 \, \int_{-T}^0 \big| u(t)- \upbar{\rho}
\big|_{\mc H_0^1}^2 \,dt \;+\; \frac 14 \, 
   \big| u(0) - \upbar{\rho}\big|_{L^2}^2  \\
&\quad\le\;  \frac 14 \, \big| u(-T) - \upbar{\rho} \big|_{L^2}^2
\;+\; I_{[-T,0]}(u) \;+\; 
\frac 1{2\varepsilon}\, \int_{-T}^0 
\big| u(t) - \upbar{\rho}|_{L^2}^2  \,dt \;.
\end{split}
\end{equation*}
\end{lemma}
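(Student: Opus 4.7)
\medskip

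The plan is to carry out a standard energy estimate: take the $L^2$ inner product of the perturbed equation satisfied by $u$ with the test function $w:=u-\upbar{\rho}$, integrate in time, and close the estimate by Young's inequality using that $|f'|\le 1$ and $\sigma\le 1/4$ on $[0,1]$.

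First, since $I_{[-T,0]}(u)<\infty$, Theorem~\ref{s01} provides $H\in\mf H^1_0(\sigma(u))$ such that $u$ is a weak solution to
\begin{equation*}
u_t+f(u)_x=\varepsilon u_{xx}-2\varepsilon(\sigma(u)H_x)_x,
\qquad u(t,0)=\rho_0,\; u(t,1)=\rho_1,
\end{equation*}
and $I_{[-T,0]}(u)=\varepsilon\|H\|_{1,\sigma(u)}^2$. Since $\upbar{\rho}$ is the stationary solution of \eqref{eq:1}, it satisfies $f(\upbar{\rho})_x=\varepsilon\upbar{\rho}_{xx}$, hence $w=u-\upbar{\rho}$ satisfies, in a weak sense and with vanishing spatial boundary values,
\begin{equation*}
w_t+\bigl(f(u)-f(\upbar{\rho})\bigr)_x=\varepsilon w_{xx}-2\varepsilon(\sigma(u)H_x)_x.
\end{equation*}

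Second, I would test this equation against $w$ itself. Since $w(-T,\cdot)$ and $w(0,\cdot)$ need not vanish, the honest way to do this is to multiply by a smooth temporal cutoff $\chi_\delta:[-T,0]\to[0,1]$ equal to $1$ on $[-T+\delta,-\delta]$ and vanishing at $\pm T,0$, test with $\chi_\delta w$ (which is a legitimate element of $\mf H^1_0(\sigma(u))$ by the bound $\sigma\le 1/4$ and the finiteness of $\<\!\<u_x,u_x\>\!\>$), and let $\delta\downarrow 0$ using the $L^2$-continuity in time of $w$ (which follows from $u\in C([-T,0];M)$). This yields the identity
\begin{equation*}
\tfrac{1}{2}\|w(0)\|_{L^2}^2-\tfrac{1}{2}\|w(-T)\|_{L^2}^2
+\varepsilon\!\int_{-T}^{0}\!\|w_x\|_{L^2}^2\,dt
=\int_{-T}^{0}\!\<f(u)-f(\upbar{\rho}),w_x\>\,dt+2\varepsilon\!\int_{-T}^{0}\!\<\sigma(u)H_x,w_x\>\,dt,
\end{equation*}
where the integrations by parts use that $w$ vanishes at $x=0,1$.

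Third, I would estimate the two right-hand terms. Since $f(r)=r(1-r)$ has $|f'|\le 1$ on $[0,1]$, Cauchy--Schwarz and Young's inequality $ab\le\tfrac{1}{\varepsilon}a^2+\tfrac{\varepsilon}{4}b^2$ give
\begin{equation*}
\bigl|\<f(u)-f(\upbar{\rho}),w_x\>\bigr|\le\|w\|_{L^2}\|w_x\|_{L^2}\le\tfrac{1}{\varepsilon}\|w\|_{L^2}^2+\tfrac{\varepsilon}{4}\|w_x\|_{L^2}^2.
\end{equation*}
For the mobility term, Cauchy--Schwarz with weight $\sigma(u)$ combined with $\sigma\le 1/4$ and Young give
\begin{equation*}
2\varepsilon\,\<\sigma(u)H_x,w_x\>\le\varepsilon\<\sigma(u)H_x,H_x\>+\varepsilon\<\sigma(u)w_x,w_x\>\le\varepsilon\<\sigma(u)H_x,H_x\>+\tfrac{\varepsilon}{4}\|w_x\|_{L^2}^2.
\end{equation*}
Summing and using $\varepsilon\int_{-T}^{0}\<\sigma(u)H_x,H_x\>\,dt=I_{[-T,0]}(u)$, the combined coefficient of $\int\|w_x\|_{L^2}^2$ on the right is $\varepsilon/2$, which absorbs into the left-hand $\varepsilon\int\|w_x\|_{L^2}^2$ leaving $\varepsilon/2$ on the left; dividing by $2$ (and noting $\varepsilon/4\ge\varepsilon/8$) yields the claimed inequality.

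The only genuine technical point is the temporal cutoff/approximation needed to justify testing with $w$ in spite of its non-vanishing time-boundary values; once this is in place, the rest is bookkeeping with Young's inequality, and the constants in the statement are designed precisely to accommodate the Lipschitz constant of $f$ and the uniform bound on $\sigma$.
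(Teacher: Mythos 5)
Your proof is correct and takes essentially the same approach as the paper: an energy estimate against the test function $w=u-\upbar{\rho}$, using the stationary equation, the Lipschitz bound $|f'|\le 1$, the mobility bound $\sigma\le 1/4$, and a temporal cutoff to handle the non-vanishing time-boundary values. The only cosmetic difference is that the paper plugs $H=\tfrac12(u-\upbar{\rho})$ directly into the variational definition of $I_{[-T,0]}$, whereas you invoke Theorem~\ref{s01} to express $u$ as a solution of the controlled PDE and then bound the cross term $2\varepsilon\<\!\<\sigma(u)H_x,w_x\>\!\>$ by Young's inequality; these two routes are equivalent by completion of the square.
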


\begin{proof}
The proof of this lemma is similar to the one of Lemma 4.9 in
\cite{blm1} or the one of Lemma 4.2 in \cite{flm1}. We thus just
sketch the argument.

Fix $T>0$ and recall the definition of the linear functional $L_u$
introduced in \eqref{e:2.5}. By definition of the action functional
$I_{[-T,0]}$, for any function $H$ in
$C^\infty_{0}([-T,0]\times[0,1])$,  
\begin{equation*}
L_u (H) \;-\; 
\varepsilon\, \big\langle\!\big\< H_x , \sigma(u) H_{x}\big
\>\!\big\rangle \;\le\; I_{[-T,0]}(u)\;.
\end{equation*}
Take $H=(1/2)(u-\upbar{\rho})$. This function is not smooth and does
not satisfy the boundary conditions at $-T$ and $0$, but can be
approximated by such smooth functions in the norms needed for our
purposes. This is presented with all details in the proofs of Lemma
4.9 in \cite{blm1} and Lemma 4.2 in \cite{flm1}. For instance, to
match the boundary conditions at $-T$ and $0$, we multiply $H$ by a
time dependent function which vanishes at $-T$ and $0$ and which
is close to the indicator of the interval $[0,T]$.

Integrating by parts and adding to the previous expression 
$\<\!\< f(\upbar{\rho})_x -\varepsilon \upbar{\rho}_{xx},
H\>\!\>$, which vanishes, the previous
equation implies
\begin{equation*}
\begin{split}
& \frac 14 \, \big| u(0) - \upbar{\rho} \big|_{L^2}^2 \;+\;
\frac{\varepsilon}2 
\<\!\<  u_x - \upbar{\rho}_x \,,\,  u_x - \upbar{\rho}_x \>\!\>
\;-\;  \frac{\varepsilon}4 
\big\langle\!\big\langle  u_x - \upbar{\rho}_x \, ,\, 
\sigma(u) \, [u_x - \upbar{\rho}_x] \big\rangle\!\big\rangle
\\
&\qquad
\le\; I_{[-T,0]}(u) \;+\; \frac 14 \, 
\big|u(-T) - \upbar{\rho} \big|_{L^2}^2 
\;+\; \frac{1}2 
\<\!\< f(u) - f(\upbar{\rho})\,,\, u_x - \upbar{\rho}_x \>\!\>
\end{split}
\end{equation*}
Since $\sigma(u)\le 1/4$, 
it remains to apply Schwarz inequality to bound the last term and to
recall that $f$ is Lipschitz with Lipschitz constant one.
\end{proof}

\begin{lemma}
\label{s06}
Fix a path $u$ in $\mc U(\upbar{\rho})$ such that
$I(u) < \infty$. Then
\begin{equation*}
\lim_{t\to -\infty}  \big| u(t) - \upbar{\rho} \big|_{L^2} 
\;=\; 0 \quad\text{and}\quad
\lim_{n\to\infty} \: \int_{-(n+1)}^{-n} 
\big| u(t) - \upbar{\rho} \big|_{\mc H_0^1}^2 \,dt \;=\; 0\;.
\end{equation*}
\end{lemma}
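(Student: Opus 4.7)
The plan is to prove the two limits in sequence, with the second following from the first. Once $|u(t)-\upbar{\rho}|_{L^2}\to 0$ as $t\to-\infty$ is established, Lemma~\ref{s04} applied on $[-(n+1),-n]$ bounds $\frac{\varepsilon}{8}\int_{-(n+1)}^{-n}|u(t)-\upbar{\rho}|_{\mc H_0^1}^2\,dt$ by the sum of $\frac14|u(-(n+1))-\upbar{\rho}|_{L^2}^2$, $I_{[-(n+1),-n]}(u)$, and $\frac{1}{2\varepsilon}\int_{-(n+1)}^{-n}|u(t)-\upbar{\rho}|_{L^2}^2\,dt$. The first and third terms vanish as $n\to\infty$ by the first limit (the third by dominated convergence, using the uniform bound $|u(t)-\upbar{\rho}|_{L^2}^2\le 1$), while the second vanishes by the additivity relation $I_{[-(n+1),-n]}(u)=I_{[-(n+1),0]}(u)-I_{[-n,0]}(u)$ and the finiteness of $I(u)$.

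For the first limit, the obstacle is that convergence in $M$ is weak* convergence in $L^\infty$, which does not by itself yield strong $L^2$ convergence. My plan is a compactness plus Gronwall argument. Using the crude bound $|u(t)-\upbar{\rho}|_{L^2}^2\le 1$ on the right-hand side of Lemma~\ref{s04} applied on $[-(n+1),-n]$, one obtains $\int_{-(n+1)}^{-n}|u(t)-\upbar{\rho}|_{\mc H_0^1}^2\,dt\le C$ uniformly in $n$ (here I use also that $I_{[-(n+1),-n]}(u)\le I(u)<\infty$). By the mean value theorem, for each $n$ there exists $t_n\in[-(n+1),-n]$ with $|u(t_n)-\upbar{\rho}|_{\mc H_0^1}\le C^{1/2}$. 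The Rellich embedding $\mc H^1_0\hookrightarrow L^2$ being compact, the sequence $\{u(t_n)-\upbar{\rho}\}$ is precompact in $L^2$. On the other hand, $u(t_n)\to\upbar{\rho}$ in $M$ implies $u(t_n)-\upbar{\rho}\to 0$ weakly in $L^2$, so the only possible $L^2$ cluster point is $0$; hence $|u(t_n)-\upbar{\rho}|_{L^2}\to 0$ along the entire sequence $\{t_n\}$, not merely a subsequence.

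Finally, to propagate this smallness from the discrete times $\{t_n\}$ to a continuous limit, I would invoke Gronwall. Setting $\phi(t):=\frac14|u(t)-\upbar{\rho}|_{L^2}^2$, Lemma~\ref{s04} applied on $[t_{n+1},t]$ for $t\in[t_{n+1},t_n]$ gives $\phi(t)\le\phi(t_{n+1})+I_{[t_{n+1},t_n]}(u)+\frac{2}{\varepsilon}\int_{t_{n+1}}^t\phi(r)\,dr$. Since $t_n-t_{n+1}\le 2$, Gronwall yields $\sup_{t\in[t_{n+1},t_n]}\phi(t)\le [\phi(t_{n+1})+I_{[t_{n+1},t_n]}(u)]\,e^{4/\varepsilon}$; both factors on the right tend to zero by step two and by additivity of the action, and the intervals $[t_{n+1},t_n]$ exhaust $(-\infty,t_1]$, so $\phi(t)\to 0$ as $t\to-\infty$. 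The principal difficulty is step two, where the passage from weak* convergence in $L^\infty$ to strong $L^2$ convergence along $\{t_n\}$ hinges on combining the Rellich compactness coming from the action bound with the uniqueness of the weak* limit in $M$.
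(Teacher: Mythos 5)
Your proof is correct and takes a genuinely different route from the paper's, though both rest entirely on the energy estimate of Lemma~\ref{s04}. To upgrade the weak* convergence $u(t)\to\upbar\rho$ in $M$ to strong $L^2$ convergence, the paper expands $u(t)-\upbar\rho$ in the Dirichlet eigenbasis $e_k=\sqrt2\sin(k\pi x)$, controls the modes $k>k_0$ by the $\mathcal H^1_0$ integral coming from Lemma~\ref{s04}, and lets the finitely many low modes go to zero by the weak* limit; this first yields the integral statement $\int_{-(n+1)}^{-n}|u-\upbar\rho|^2_{L^2}\,dt\to0$, which is then promoted to a pointwise limit by choosing the time $s_n$ where $|u(\cdot)-\upbar\rho|_{L^2}$ is minimal on $[-n,-(n-1)]$ and reapplying Lemma~\ref{s04} from $s_{n+1}$. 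You instead pick a time $t_n$ where the $\mathcal H^1_0$ norm is bounded (via Chebyshev from the same Lemma~\ref{s04} bound), invoke the compact embedding $\mathcal H^1_0\hookrightarrow L^2$ together with uniqueness of the weak limit to get $|u(t_n)-\upbar\rho|_{L^2}\to0$, and then propagate via Gronwall. The two devices are functionally equivalent: the eigenfunction truncation and the Rellich embedding each bridge the weak*--to--strong gap, while the minimizing-time trick and Gronwall each convert control at a discrete sequence of times into control at all times. Your route avoids the explicit spectral decomposition and is somewhat more generic; the paper's minimizing-time argument avoids the exponential factor $e^{4/\varepsilon}$ that Gronwall produces, though since only the qualitative limit is needed this is immaterial. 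One small point worth noting: the additivity $I_{[-(n+1),-n]}(u)=I_{[-(n+1),0]}(u)-I_{[-n,0]}(u)$ you invoke is not immediate from the variational definition \eqref{e:2.6}, but follows from the integral representation in Theorem~\ref{s01}; the paper implicitly relies on the same fact when it sends $I_{[-(n+1),-(n-1)]}(u)\to0$.
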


\begin{proof}
We first show that 
\begin{equation}
\label{f01}
\lim_{n\to\infty} \int_{-(n+1)}^{-n} 
\big\< u(t) - \upbar{\rho}\,,\,  u(t) - \upbar{\rho} \big\> \,dt \;=\; 0\;.
\end{equation}

Let $\{e_k , \,k\ge 1\}$ be the complete orthonormal system of
$L_2([0,1])$ given by $e_k(x) = \sqrt{2} \sin (k\pi x)$  and set 
$\theta_k(t) = \< u(t) - \upbar{\rho} , e_k\>$, $k\ge 1$. 
The integral in the previous formula can be written as
\begin{equation*}
\sum_{k\ge 1} \int_{-(n+1)}^{-n}  \theta_k(t)^2 \,dt
\;\le\; \sum_{k= 1}^{k_0} \int_{-(n+1)}^{-n}  \theta_k(t)^2 \,dt
\;+\; \frac 1{k_0^2} \sum_{k\ge  1} \int_{-(n+1)}^{-n}  
k^2 \theta_k(t)^2 \,dt 
\end{equation*}
for any $k_0\ge 1$.  Since $e_k$ is an eigenfunction of the Laplacian
with Dirichlet boundary conditions, the second term is equal to the
time integral of $(\pi k_0) ^{-2} \big| u(t) - \upbar{\rho}\big|_{\mc H_0^1}^2$.
Therefore, by Lemma \ref{s04}, the last expression is less than or
equal to
\begin{equation*}
\begin{split}
& \sum_{k= 1}^{k_0} \int_{-(n+1)}^{-n} 
\big\< u(t) - \upbar{\rho} , e_k\big\>^2  \,dt
\;+\; \frac 2 {\pi^2 k_0^2 \, \varepsilon} 
\, \big| u(-(n+1)) - \upbar{\rho} \big|_{L^2}^2\\
& \qquad +\; \frac 1{\pi^2 k_0^2} \Big\{
\frac 8 \varepsilon \, I_{[-(n+1),-n]} (u) \;+\; 
\frac 4{\varepsilon^2}\, \int_{-(n+1)}^{-n} 
\big| u(t) - \upbar{\rho} \big|_{L^2}^2  \,dt \Big\} \;.
\end{split}
\end{equation*}
Since $u(t) - \upbar{\rho}$ is absolutely bounded by $1$ and since 
$I_{[-(n+1),-n]} (u) \le I (u)$, the previous
expression is bounded by
\begin{equation*}
\sum_{k= 1}^{k_0} \int_{-(n+1)}^{-n} \big\< u(t) - 
\upbar{\rho} , e_k\big\>^2  \,dt
\;+\; \frac 1 {\pi^2 k_0^2} 
\, \Big\{ \frac 2 \varepsilon \;+\; \frac 4 {\varepsilon^2}
\;+\; \frac 8 \varepsilon \, I (u) \Big\}\;.
\end{equation*}
Since $u(t)$ converges weakly in $L^2([0,1])$ to $\upbar{\rho}$, as
$t\to -\infty$, to conclude the proof of the claim \eqref{f01} it
remains to let $n\to\infty$ and then $k_0\to\infty$.

Since $u$ belongs to $C((-\infty, 0], M)$, the function $t\mapsto 
| u(t) - \upbar{\rho}|_{L^2}$ is lower semicontinuous. 
In particular, there exists
$s_n \in [-n , -(n-1)]$ such that
\begin{equation*}
\big| u(s_n) - \upbar{\rho} \big|_{L^2}^2 \;=\; 
\min_{-n \le t \le -(n-1)} \big| u(t) - \upbar{\rho}\big|_{L^2}^2 \;.
\end{equation*}
By applying Lemma \ref{s04} in the time interval $[s_{n+1},t]$ with
$t\in [-n,-(n-1)]$ we deduce  
\begin{equation*}
\sup_{-n \le t \le -(n-1)} \big| u(t) - \upbar{\rho} \big|_{L^2}^2
\;\le\; 
4 \, I_{[-(n+1),-(n-1)]} (u) \;+\; \Big( 1 + \frac
2\varepsilon \Big) \int_{-(n+1)}^{-(n-1)} 
\big| u(t) -  \upbar{\rho} \big|_{L^2}^2 \,dt \;,
\end{equation*}
where we bounded $|u(s_{n+1}) - \upbar{\rho}|_{L^2}^2$ by the time
integral of $|u(t) - \upbar{\rho}|_{L^2}^2$ over the interval 
$[-(n+1), - n]$. In view of the hypothesis $I(u)<\infty$ and
\eqref{f01}, the first statement of the lemma follows from the previous
estimate. 
The second one is a direct consequence of the first and Lemma \ref{s04}.
\end{proof}

Fix a path $u$ in $\mc U(\upbar{\rho}_\varepsilon)$ such that
$I(u) < \infty$.  It follows from the previous lemma that
there exists a sequence $t_n \to -\infty$ such that
\begin{equation}
\label{f05}
\lim_{n}  \big| u(t_n) - \upbar{\rho} \big|_{\mc H_0^1} \;=\; 0\;.
\end{equation}

\begin{lemma}
\label{s14}
Suppose that $\rho$ is a function in $M$ such that
\begin{equation*}
\big| \rho - \upbar{\rho} \big|_{\mc H_0^1}
\;\le\; \frac 12  \min \{\rho_0, 1 - \rho_1\}\;.
\end{equation*}
Then, there exists a constant $C_0$, depending only on $\rho_0$
and $\rho_1$, such that for any $T>0$
\begin{equation*}
\inf \, \big\{ I_{[-T,0]}(u) \,,\:
u(-T)=\upbar\rho,\,u(0)=\rho\big\} 
\;\le\; C_0 \Big(T + \frac 1T \Big)\, 
\Big( \varepsilon + \frac 1 \varepsilon \Big) \,
\big| \rho - \upbar{\rho} \big|_{\mc H_0^1}^2 \; .
\end{equation*}
\end{lemma}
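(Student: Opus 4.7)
The plan is to exhibit as test path the linear interpolation
\begin{equation*}
u(t,x) \;:=\; \upbar\rho(x) \;+\; \frac{t+T}{T}\bigl(\rho(x)-\upbar\rho(x)\bigr)\;,\qquad t\in[-T,0],
\end{equation*}
and to bound its action directly from the variational definition \eqref{e:2.6}. Set $v:=\rho-\upbar\rho$ and $\alpha=\alpha(t):=(t+T)/T\in[0,1]$, so that $u=\upbar\rho+\alpha v$, $u_t=v/T$, and $u_{xx}-\upbar\rho_{xx}=\alpha v_{xx}$. Since $v\in\mc H_0^1$ has vanishing trace at $0$ and $1$, the path $u$ satisfies the inhomogeneous Dirichlet boundary conditions, $u(-T)=\upbar\rho$ and $u(0)=\rho$. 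By the one-dimensional Sobolev embedding, $|v(x)|\le\sqrt{x\wedge(1-x)}\,|v|_{\mc H_0^1}\le|v|_{\mc H_0^1}\le\tfrac12\min\{\rho_0,1-\rho_1\}$, so that $u(t,x)$ takes values in a compact sub-interval of $(0,1)$ depending only on $\rho_0,\rho_1$. In particular there exists $\kappa=\kappa(\rho_0,\rho_1)>0$ such that $\sigma(u)\ge\kappa$ uniformly; consequently the norm $|\cdot|_{-1,\sigma(u)}$ is controlled from above by $\kappa^{-1/2}\,|\cdot|_{-1}$.

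\textbf{The residual.} Using the stationarity identity $f(\upbar\rho)_x=\varepsilon\,\upbar\rho_{xx}$, set
\begin{equation*}
\mc R(t,x)\;:=\;u_t + f(u)_x - \varepsilon u_{xx}
\;=\;\frac{v}{T} \;+\; \bigl[f(u)-f(\upbar\rho)\bigr]_x \;-\; \varepsilon\,\alpha\,v_{xx}\;.
\end{equation*}
Because $u$ has the correct boundary values, an integration by parts in \eqref{e:2.5} gives $L^\varepsilon_u(H)=\langle\!\langle\mc R,H\rangle\!\rangle$ for all $H\in C^\infty_0([-T,0]\times[0,1])$. Completing the square in \eqref{e:2.6} then yields
\begin{equation*}
I^\varepsilon_{[-T,0]}(u)\;\le\;\frac{1}{4\varepsilon}\int_{-T}^0|\mc R(t)|^2_{-1,\sigma(u(t))}\,dt
\;\le\;\frac{1}{4\varepsilon\,\kappa}\int_{-T}^0|\mc R(t)|^2_{-1}\,dt\;.
\end{equation*}

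\textbf{Three term bound.} Each piece of $\mc R(t)$ is estimated separately in $\mc H_0^{-1}$: by the dual Poincar\'e inequality, $|v/T|_{-1}\le(\pi T)^{-1}|v|_{L^2}\le C_1 T^{-1}|v|_{\mc H_0^1}$; writing $f(u)-f(\upbar\rho)=\alpha v(1-u-\upbar\rho)$ and using $|\cdot|_{-1}$-boundedness of the distributional derivative by the $L^2$ norm, $|[f(u)-f(\upbar\rho)]_x|_{-1}\le|f(u)-f(\upbar\rho)|_{L^2}\le\alpha|v|_{L^2}\le C_2\alpha|v|_{\mc H_0^1}$; and by the duality pairing $\langle v_{xx},h\rangle=-\langle v_x,h_x\rangle$, $|\varepsilon\alpha v_{xx}|_{-1}\le\varepsilon\alpha|v|_{\mc H_0^1}$. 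Combining these via $(a+b+c)^2\le 3(a^2+b^2+c^2)$ and using $\int_{-T}^0 dt = T$, $\int_{-T}^0\alpha^2 dt=T/3$ yields
\begin{equation*}
\int_{-T}^0|\mc R(t)|^2_{-1}\,dt\;\le\; C\,|v|^2_{\mc H_0^1}\,\Bigl(\frac{1}{T}+T+\varepsilon^2 T\Bigr)\;,
\end{equation*}
with $C$ depending only on $\rho_0,\rho_1$. Dividing by $4\varepsilon\kappa$ gives $I^\varepsilon_{[-T,0]}(u)\le C'|v|^2_{\mc H_0^1}\bigl(\varepsilon^{-1}T^{-1}+\varepsilon^{-1}T+\varepsilon T\bigr)$, which is bounded by $C_0(T+T^{-1})(\varepsilon+\varepsilon^{-1})|v|^2_{\mc H_0^1}$ since each of the three summands is absorbed into the expansion of this product. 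The only non-routine point in the whole argument is the uniform positivity of $\sigma(u)$ along the test path; it is precisely this that forces the quantitative smallness assumption $|\rho-\upbar\rho|_{\mc H_0^1}\le\tfrac12\min\{\rho_0,1-\rho_1\}$ in the statement.
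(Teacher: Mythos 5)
Your proposal is correct and follows essentially the same approach as the paper: same straight-line test path, same use of the smallness hypothesis to get $\sigma(u)\ge c_0>0$ uniformly along the path, and the same three-term estimate on the residual $u_t+f(u)_x-\varepsilon u_{xx}$ in $\mc H^{-1}_0$ via Schwarz/Poincar\'e (your factorization $f(u)-f(\upbar\rho)=\alpha v(1-u-\upbar\rho)$ is just an explicit form of the paper's appeal to the Lipschitz constant $1$ of $f$). The only cosmetic difference is that you pass through the $\mf H^{-1}_0$-norm formula for the action and divide by $c_0$ once at the outset, whereas the paper completes the square against $\tfrac{\varepsilon c_0}{2}\<\!\<H_x,H_x\>\!\>$ term by term inside the supremum; the resulting estimates are identical.
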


\begin{proof}
We have to exhibit a path whose action can be estimated by the right
hand side of the inequality appearing in the statement of the lemma.
We claim that the straight one $u(t) = [1+(t/T)] \rho - (t/T)
\upbar{\rho}$, $t\in[-T,0]$, fulfills the requirements.

Since $\rho_0\le \upbar{\rho}(x) \le \rho_1$, it follows from the
assumption of the lemma and from the elementary estimate between the
$L^\infty$ norm and the $\mc H^1_0$ norm, namely, from the estimate 
$|h|^2_{L^\infty} \le \int_0^1 h_x^2 dx$, that $\rho_0/2 \le u \le
\rho_1 + (1-\rho_1)/2$. In particular, $\sigma(u) \ge c_0>0$ for some
constant $c_0$ which depends only on $\rho_0$ and $\rho_1$.

Recall the definition \eqref{e:2.6} of the action functional
$I_{[-T,0]}$. Fix a function $H$ in $C^\infty_{0}([-T,0]\times
[0,1])$, we need to estimate three terms to get a bound on
$I_{[-T,0]}(u)$. The first one is $\<\!\< u_t, H\>\!\>$.  Since
$u_t=(\rho-\upbar\rho)/T$, by using Schwarz inequality and $\<h,h\>
\le \<h_x,h_x\>$, we deduce
\begin{equation*}
\<\!\< u_t, H \>\!\>
\;\le\; \frac {\varepsilon c_0}2 \, \langle\< H_x,H_x\>\!\> 
\;+\; \frac 1 {2\varepsilon c_0 T} \, 
\big| \rho - \upbar{\rho} \big|_{\mc H_0^1}^2 \;.
\end{equation*}

The second and third terms are estimated as follows. Since
$f(\upbar{\rho})_x + \varepsilon \upbar{\rho}_{xx} =0$, 
\begin{equation*}
\<\!\< f(u)_x - \varepsilon u_{xx} \,,\, H\>\!\>
\; =\;  - \<\!\< f(u) -f(\upbar\rho) \,,\, H_x \>\!\>
\;+\; \epsilon \<\!\< u_x -\upbar\rho_x \,,\, H_x \>\!\>
\;.
\end{equation*}
Since $u-\upbar{\rho} = [1+(t/T)] [\rho - \upbar{\rho}]$, 
again by Schwarz inequality
\begin{equation*}
\<\!\< f(u)_x - \varepsilon u_{xx} \,,\, H\>\!\>
\;\le \; 
\frac {\varepsilon c_0}2 \, \<\!\< H_x ,H_x\>\!\>
\;+\; \frac {T}{c_0} \Big( \varepsilon + \frac 1 \varepsilon \Big)  
\, \big| \rho - \upbar{\rho} \big|_{\mc H_0^1}^2
\;,
\end{equation*}
where we used the fact that $f'$ is absolutely bounded by $1$ and
again the estimate $\<h,h\> \le \<h_x,h_x\>$. Since $\sigma(u) \ge
c_0$, the lemma follows from the previous bounds and \eqref{e:2.6}.
\end{proof}

We are now ready to prove the equivalence between the variational
problems \eqref{qp-1} and \eqref{qp}.

\begin{proof}[Proof of Theorem~\ref{t:S=V}: the identity 
  $\hat V_\epsilon = V_\epsilon$.]
  Any path $u$ in $C([-T,0];M)$ such that $u(-T)=\upbar{\rho}$ may be
  extended to a path in $\mc U (\upbar\rho)$ by setting $u(t) =
  \upbar{\rho}$, for $t\in (-\infty,-T)$. The inequality $\hat
  V_\varepsilon \le V_\varepsilon $ follows trivially.

  To prove the reverse inequality, fix $\rho$ in $M$ such that $\hat
  V_\varepsilon (\rho) < \infty$. Fix $0<\delta \le (1/2) \min
  \{\rho_0, 1 - \rho_1\}$ and let $u^\delta \in \mc U(\upbar{\rho})$
  be such that $u^\delta(0) = \rho$ and $I(u^\delta) \le \hat
  V_\varepsilon (\rho) + \delta$.  
  By \eqref{f05}, there exists $T>0$ such that
  \begin{equation*}
    \big| u^\delta (-T) - \upbar{\rho} \big|_{\mc H_0^1}  
    \; \le \; \delta \;.
\end{equation*}
By Lemma \ref{s14}, there exists a path $v$ in $C([-1,0], M)$
such that $v(-1)=\upbar{\rho}$, $v(0)=u^\delta (-T)$, and 
$I_{[-1,0]} (v) \le 2 C_0 (\varepsilon + \varepsilon^{-1})
\delta^2$. Consider the path $w$ in $C([-T-1,0], M)$ defined by $w(t) =
v(t+T)$, $t\in[-(T+1),-T]$, $w(t) = u^\delta(t)$, $t\in (-T,0]$. 
Clearly, $w(-T-1) = \upbar{\rho}$, $w(0) = \rho$ and
\begin{equation*}
I_{[-(T+1),0]}(w) \;=\; I_{[-1,0]}(v) \;+\;
I_{{[-T,0]}}(u^\delta) \;\le\; \hat V_\varepsilon (\rho) 
\;+\; 
\delta 
\;+\; 
2 \, C_0 \Big(\varepsilon + \frac 1\varepsilon \Big) \, \delta^2\; ,
\end{equation*}
which, by the arbitrariness of $\delta$, yields 
$V_\varepsilon(\rho) \le \hat V_\varepsilon(\rho)$ and concludes the
proof.  
\end{proof}

\subsection*{Upper bound for quasi-potential}

Recalling \eqref{Mo} and \eqref{Sig}, the first two lemmata below state that
if $\rho$ belongs to $M^o$ and $(\varphi,\rho)\in \Sigma$ then we can
construct a solution to the canonical equations \eqref{nhf} converging
to $(s'(\upbar{\rho}_\epsilon),\upbar{\rho}_\epsilon)$ as
$t\to-\infty$.

Fix a continuous function $\varphi : [0,1]\to [\varphi_0, \varphi_1]$
satisfying the boundary conditions $\varphi(0)=\varphi_0$,
$\varphi(1)=\varphi_1$, and consider the parabolic equation
\begin{equation}
\label{f41}
\begin{cases}
{\displaystyle 
\psi_t = \varepsilon \psi_{xx} \, + \, \frac{e^{\psi}-1}{e^{\psi}+1} 
\, \psi_x \, \big( 1- \varepsilon \psi_x \big) } \;, \\
\vphantom{\Big\{} \psi(t,0) = \varphi_0\;, \;\;
\psi(t,1) = \varphi_1\;, \\
\psi(0, \cdot) = \varphi (\cdot)\;.
\end{cases}
\end{equation}
A classical solution $\psi$ of \eqref{f41} is a function 
$\psi: [0,\infty)\times [0,1]\to\bb R$ such that 
$\psi$ is continuous in $[0,\infty)\times [0,1]$, 
$\psi_t$, $\psi_x$ and $\psi_{xx}$ are continuous
in $(0,\infty)\times (0,1)$, and $\psi$ satisfies the
identities \eqref{f41}. 
Let $F: [0,\infty)\times [0,1]\to [0,1]$ be a classical solution to
the viscous Burgers equation \eqref{eq:1}. A simple computation shows
that $\psi =s'(F)= \log[F/(1-F)]$ is a classical solution to
\eqref{f41}.

\begin{lemma}
\label{s10}
Fix an absolutely continuous function $\varphi : [0,1]\to
[\varphi_0,\varphi_1]$ satisfying the boundary conditions
$\varphi(0)=\varphi_0$, $\varphi(1)=\varphi_1$ and $0\le \epsilon
\varphi_x \le 1$.  There exists a unique classical solution $\psi$ to
\eqref{f41}.  Moreover, this solution satisfies $0<\varepsilon\psi_x
<1$ for $(t,x)\in (0,\infty)\times [0,1]$.  Finally, as $t\to+\infty$
the function $\psi(t)$ converges to $s'(\upbar\rho_\varepsilon)=
\log[\upbar\rho_\varepsilon / (1-\upbar\rho_\varepsilon)]$ in the
$C^3([0,1])$ topology uniformly in $\varphi$.
\end{lemma}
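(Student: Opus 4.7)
The plan is to reduce \eqref{f41} to the viscous Burgers equation \eqref{eq:1} via the smooth bijection $\psi \leftrightarrow F = e^\psi/(1+e^\psi)$, so that $\psi = s'(F) = \log[F/(1-F)]$. A short computation using $(e^\psi-1)/(e^\psi+1) = 2F-1 = -f'(F)$ and $\psi_x = F_x/[F(1-F)]$ shows that $\psi$ is a classical solution of \eqref{f41} if and only if $F$ is a classical solution of \eqref{eq:1} with Dirichlet data $\rho_0,\rho_1$ and initial datum $F_0 := e^\varphi/(1+e^\varphi)$, which is continuous on $[0,1]$, takes values in $[\rho_0,\rho_1]\subset(0,1)$, and matches the boundary values. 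Existence, uniqueness, and $C^\infty$-smoothness of $\psi$ for $t>0$ then follow from the corresponding classical theory for the viscous Burgers equation, together with the parabolic maximum principle, which keeps $F$ in $[\rho_0,\rho_1]$ so that $\psi = s'(F)$ remains well defined.

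For the strict bound $0<\varepsilon\psi_x<1$ on $(0,\infty)\times[0,1]$, set $\zeta := \varepsilon\psi_x$; differentiating \eqref{f41} in $x$ yields a parabolic equation of the form
\begin{equation*}
\zeta_t \;=\; \varepsilon\zeta_{xx} \;+\; \tfrac{e^\psi-1}{e^\psi+1}\,(1-2\zeta)\,\zeta_x \;+\; \tfrac{2 e^\psi}{\varepsilon(e^\psi+1)^2}\,\zeta^2\,(1-\zeta)\,,
\end{equation*}
while differentiating the identity $\psi(t,i)=\varphi_i$ in $t$ and inserting \eqref{f41} at $x=i$ gives the Robin-type condition $\varepsilon\zeta_x(t,i) + (2\rho_i-1)\,\zeta(t,i)\,(1-\zeta(t,i)) = 0$ for $i=0,1$. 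The constant functions $\zeta\equiv 0$ and $\zeta\equiv 1$ are both solutions of this boundary-value problem, so the strong parabolic maximum principle combined with Hopf's lemma --- applied as in the proof of Lemma~\ref{s02}, via Theorem~3.7 of \cite{pw} and the remark following it --- preclude $\zeta$ from touching $0$ or $1$ at any $(t,x)$ with $t>0$, unless the initial datum $\zeta(0,\cdot)=\varepsilon\varphi_x$ equals $0$ or $1$ identically. The first is excluded by $\varphi_0<\varphi_1$; the second can only happen in the borderline case $\varepsilon=\varepsilon_0=1/(\varphi_1-\varphi_0)$, and is handled by an approximation argument together with the uniqueness statement already proven.

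For the long-time convergence, note first that $s'(\upbar\rho_\varepsilon)$ is the unique stationary solution of \eqref{f41}, corresponding to $\upbar\rho_\varepsilon$, and is smooth with $0<\varepsilon[s'(\upbar\rho_\varepsilon)]_x<1$. The exponential global attractiveness of $\upbar\rho_\varepsilon$ in the $C^1$-topology, uniform on bounded subsets of $C([0,1])$ as stated below \eqref{eq:1}, applies to all of our $F_0$'s simultaneously because they lie in the fixed bounded family $\{g\in C([0,1]):\rho_0\le g\le\rho_1\}$. Standard parabolic regularity for \eqref{eq:1} gives uniform $C^k$-bounds on $F(t,\cdot)$ for $t\ge 1$, and interpolation upgrades the exponential $C^1$-decay to exponential convergence in any $C^k$; transferring through the smooth diffeomorphism $s'\colon[\rho_0,\rho_1]\to[\varphi_0,\varphi_1]$ yields the claimed uniform $C^3$-convergence of $\psi(t,\cdot)$ to $s'(\upbar\rho_\varepsilon)$.

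The chief obstacle I anticipate is the strict bound $0<\varepsilon\psi_x<1$: the Robin condition for $\zeta$ has a coefficient depending nonlinearly on $\zeta$ itself, so one cannot directly invoke a textbook comparison theorem, and a careful combination of the strong maximum principle in the interior with a Hopf-type argument at the two endpoints --- exploiting that the constants $0$ and $1$ are simultaneously barrier solutions of the interior PDE and of the boundary identity --- is needed to rule out tangencies.
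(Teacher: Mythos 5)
Your proof follows essentially the same route as the paper: reduce \eqref{f41} to the viscous Burgers equation via $F=e^\psi/(1+e^\psi)$ and invoke classical existence/uniqueness/regularity for \eqref{eq:1}; derive the quasilinear parabolic problem with nonlinear boundary conditions satisfied by the space derivative (your $\zeta=\varepsilon\psi_x$ is the paper's $\gamma=\psi_x$ rescaled, and your boundary coefficient $2\rho_i-1$ equals the paper's $g(t,i)$); apply Theorem~3.7 of \cite{pw} and remark~(ii) following it for the strict interior bound; and upgrade the $C^1$-convergence of \cite[Thm.~4.9]{GK} to $C^3$.

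One small slip is worth flagging. To rule out $\varphi_x\equiv\varepsilon^{-1}$ you propose "an approximation argument together with the uniqueness statement," observing that this only occurs when $\varepsilon=\varepsilon_0$. But in that configuration the lemma's conclusion is simply \emph{false}: the affine initial datum is exactly $s'(\upbar\rho_{\varepsilon_0})$, so $\psi(t)\equiv\varphi$ for all $t$ and $\varepsilon\psi_x\equiv 1$ everywhere; no approximation can produce the strict inequality. What actually excludes this case is the standing assumption $\varepsilon\in(0,\varepsilon_0)$ (see the notation warning closing Section~\ref{sec2}), which gives $\int_0^1\varphi_x\,dx=\varphi_1-\varphi_0<\varepsilon^{-1}$ and hence rules out $\varphi_x\equiv\varepsilon^{-1}$ directly. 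The paper records precisely this hypothesis --- that $\varphi_x$ is neither identically $0$ nor identically $\varepsilon^{-1}$ --- before invoking \cite{pw}; replacing your patch by this observation closes the argument cleanly.
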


\begin{proof}
We start with existence. Set $F_0 := e^{\varphi}/ [1+e^{\varphi}]$. By
assumption, the function $F_0$ is continuous, bounded below by $\rho_0$,
bounded above by $\rho_1$ and satisfies $F_0(0)=\rho_0$, $F_0(1)=\rho_1$.
By Theorem 4.4 in Chapter 6 of \cite{lady}, there exists a unique
classical solution, denoted by $F=F(t,x)$, 
to the viscous Burgers \eqref{eq:1} with initial condition $F_0$. 
By maximum principle, $\rho_0 \le F\le \rho_1$. 
The function $\psi: [0,\infty) \times [0,1] \to \bb R$ given by $\psi
:= s'(F) = \log [ F / (1-F) ]$ is therefore well defined. An
elementary computation that we omit, relying on the special form of
the flux given by $f(u)=u(1-u)$, shows that
$\psi$ is a classical solution to \eqref{f41}.
Uniqueness follows from a similar argument. Indeed, if $\psi$ is a classical
solution to \eqref{f41}, then $F = e^{\psi}/ [1+e^{\psi}]$ is a classical
solution to \eqref{eq:1}. Hence, uniqueness of \eqref{f41} follows
from uniqueness of \eqref{eq:1}. 

A recursive argument, relying on differentiability properties of
solutions of linear parabolic equations, shows that $F(t)$, $F_t(t)$,
and therefore $\psi (t)$, $\psi_t (t)$, belong to $C^k([0,1])$ for any
$k\ge 1$. To prove that $0<\varepsilon\psi_x <1$ for $(t,x)\in
(0,\infty)\times[0,1]$, let $\gamma :=
\psi_x$ and observe that $\gamma$ solves the nonlinear equation with
mixed boundary conditions
\begin{equation*}
\begin{cases}
{\displaystyle 
\gamma_t = \varepsilon \gamma_{xx} + \, \big[ g
\, \gamma \, ( 1- \varepsilon \gamma ) \big]_x } \;, \\
\vphantom{\Big\{} \varepsilon \gamma_x (t,0) = - g (t,0)
\, \gamma (t,0) \, ( 1- \varepsilon \gamma (t,0) ) \;,  \\
\vphantom{\Big\{} \varepsilon \gamma_x (t,1) = - g (t,1)
\, \gamma (t,1) \, ( 1- \varepsilon \gamma (t,1) ) \;, \\ 
\gamma (0, \cdot) = \varphi_x (\cdot)\;,
\end{cases}
\end{equation*}
where $g = [e^{\psi}-1]/[e^{\psi}+1]$.  Since $g_x$ is bounded on
compact subsets of $(0,\infty) \times [0,1]$, and $\varphi_x$ is 
neither identically equal to $0$ nor to $\varepsilon^{-1}$, Theorem 3.7 in
\cite{pw} and the remark (ii) following it, imply the result.

Finally, by \cite[Thm.~4.9]{GK}, as $t\to\infty$, the function $F(t)$
converges to $\upbar\rho_\varepsilon$ in the $C^1([0,1])$ topology,
uniformly over $F_0:[0,1]\to [\rho_0, \rho_1]$. By the methods there
developed, it is straightforward to prove this statement in the
$C^3([0,1])$ topology.  
Since $F(t)$ converges to $\upbar\rho_\varepsilon$ in the $C^3([0,1])$
topology uniformly in $F_0$, $\psi (t)$ converges to
$\log[\upbar\rho_\varepsilon/ (1+\upbar\rho_\varepsilon)] =
\upbar\varphi_\varepsilon$ in the $C^3([0,1])$ topology, uniformly in
$\varphi$.
\end{proof}

Recalling \eqref{Mo}, fix $\rho$ in $M^o$ and $\varphi$ in $\ms P
(\rho)$.  Let $\psi$ be the solution to \eqref{f41} and define $v:
[0,\infty) \times [0,1] \to \bb R$ by
\begin{equation}
\label{r*}
v := \frac{1}{1+ e^{\psi}} -
\frac{\varepsilon \psi_{xx}}{\psi_x (1-\varepsilon \psi_x)}\;\cdot 
\end{equation}
Since $\varphi$ is smooth, $\psi(t)$ and $v(t)$ belong to $C^k([0,1])$
for all $t\ge 0$, $k\ge 1$.

\begin{lemma}
\label{s11} 
The function $v$ defined by \eqref{r*} is smooth, solves
\begin{equation}
\label{adhr*}
\begin{cases}
{\displaystyle
v_t - f(v)_x
= \varepsilon v_{xx} -  2 \varepsilon \big[ \sigma(v) \, \psi_x \big]_x } \\
{\displaystyle \vphantom{\Big\{}
v(t, 0) = \rho_0\;, \;\; v(t, 1) = \rho_1 } \\
{\displaystyle
v(0, \cdot) = \rho (\cdot)}
\end{cases}
\end{equation} 
and satisfies $0<v<1$, $(t,x)\in[0,\infty)\times[0,1]$. 
Moreover, as $t\to+\infty$ the function $v(t)$ converges to
$\upbar\rho_\varepsilon$ in the $C^1([0,1])$ topology, uniformly for
$\rho$ in $M^o$.
\end{lemma}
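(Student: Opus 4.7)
The plan is to establish the six assertions (smoothness, PDE, boundary conditions, initial condition, pointwise bounds, and long-time convergence) in turn, leveraging Lemma~\ref{s10} and Proposition~\ref{t:fp} to control $\psi$ and $\varphi$.

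First I would set up regularity. Since $\rho\in M^o$, Proposition~\ref{t:fp}(iii) gives $\varphi\in C^2([0,1])$ and Proposition~\ref{t:fp}(ii) provides $\delta>0$ with $\delta\le\varepsilon\varphi_x\le 1-\delta$. Together with Lemma~\ref{s10}, $\psi$ is smooth on $[0,\infty)\times[0,1]$ (the lemma gives smoothness for $t>0$, and at $t=0$ the smoothness of $\varphi$ plus parabolic regularity extend it), with $\psi_x$ uniformly bounded away from $0$ and $\varepsilon^{-1}$. Hence the denominator in \eqref{r*} is bounded below and $v$ is smooth on $[0,\infty)\times[0,1]$. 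For the initial condition, $\psi(0)=\varphi$ satisfies \eqref{Deq} by Proposition~\ref{t:fp}(ii), which rearranged is exactly the identity $\rho=1/(1+e^\varphi)-\varepsilon\varphi_{xx}/[\varphi_x(1-\varepsilon\varphi_x)]$; thus $v(0,\cdot)=\rho$. For the boundary conditions, $\psi(t,0)=\varphi_0$ forces $\psi_t(t,0)=0$; substituting into \eqref{f41} yields the algebraic identity $\varepsilon\psi_{xx}(t,0)/[\psi_x(t,0)(1-\varepsilon\psi_x(t,0))]=(1-e^{\varphi_0})/(1+e^{\varphi_0})$, so that $v(t,0)=e^{\varphi_0}/(1+e^{\varphi_0})=\rho_0$; the boundary $x=1$ is identical.

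To verify the PDE \eqref{adhr*} I would use the substitution $F=e^\psi/(1+e^\psi)$, so that $F$ solves the viscous Burgers equation \eqref{eq:1} by Lemma~\ref{s10}. The identities
\begin{equation*}
\psi_x=\frac{F_x}{\sigma(F)}\,,\qquad 1-\varepsilon\psi_x=\frac{\sigma(F)-\varepsilon F_x}{\sigma(F)}\,,\qquad \frac{1}{1+e^\psi}=1-F
\end{equation*}
together with the Burgers identity $\varepsilon F_{xx}=F_t+\sigma'(F)F_x$ allow us to rewrite $\varepsilon\psi_{xx}/[\psi_x(1-\varepsilon\psi_x)]$ purely in terms of $F$, $F_x$, $F_t$, and $J:=\sigma(F)-\varepsilon F_x$. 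A direct (if somewhat tedious) algebraic manipulation then expresses $v$ and its time/space derivatives in terms of $F$ and its derivatives; using once more the Burgers equation for $F$ produces \eqref{adhr*}. This computation is precisely the one encoded in the heuristic Hamiltonian discussion (reading \eqref{nhf} after the time-reversal $t\mapsto -t$ that relates \eqref{f41} to the equation for $\Phi$ in Section~\ref{s:hp}), and we use that here only to motivate the calculation, not as a proof ingredient.

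For $0<v<1$, I would invoke the maximum principle on the quasi-linear parabolic equation \eqref{adhr*}, expanded as $v_t=\varepsilon v_{xx}+f'(v)v_x-2\varepsilon\sigma'(v)v_x\psi_x-2\varepsilon\sigma(v)\psi_{xx}$, where $\psi$ is a smooth known coefficient. The crucial observation is that $\sigma(v)=v(1-v)$ vanishes at $v=0$ and $v=1$, so that both $v\equiv 0$ and $v\equiv 1$ are stationary super-/sub-solutions: at an interior maximum where $v=1$ (resp.\ minimum where $v=0$) all first-order and zero-order source terms vanish while $v_{xx}\le 0$ (resp.\ $\ge 0$), forcing $v_t\le 0$ (resp.\ $\ge 0$). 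Combined with $v(0,\cdot)=\rho\in(0,1)$ and the boundary values $\rho_0,\rho_1\in(0,1)$, the strong maximum principle gives the strict bounds. Finally, for the convergence, Lemma~\ref{s10} yields $\psi(t)\to s'(\upbar\rho_\varepsilon)$ in $C^3([0,1])$ uniformly in $\varphi$, hence uniformly in $\rho\in M^o$; since $v$ is a continuous function of $(\psi,\psi_x,\psi_{xx})$ with $\psi_x$ bounded away from $0$ and $\varepsilon^{-1}$, $v(t)$ converges in $C^1([0,1])$ to the expression \eqref{r*} evaluated at $s'(\upbar\rho_\varepsilon)$, which equals $\upbar\rho_\varepsilon$ because $s'(\upbar\rho_\varepsilon)$ satisfies \eqref{Deq} with $\rho=\upbar\rho_\varepsilon$ (as noted in \eqref{f09}).

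The main obstacle is the PDE identity \eqref{adhr*}: the direct algebraic verification is lengthy and requires careful book-keeping, and the natural conceptual route via the Hamiltonian picture is only heuristic. A secondary subtlety is ensuring that the maximum principle applies up to $t=0$ and uniformly in $\rho\in M^o$; this is handled by noting that the regularity estimates on $\psi$ are uniform in $\varphi$ (equivalently in $\rho$) thanks to the uniform bounds of Proposition~\ref{t:fp}(ii) and Lemma~\ref{s10}.
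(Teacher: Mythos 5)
Your proposal is correct and follows the same route as the paper: verify the PDE by a direct computation driven by the substitution $F=e^\psi/(1+e^\psi)$ and the Burgers equation, read off the boundary and initial conditions from \eqref{f41} and the Euler--Lagrange equation \eqref{Deq}, obtain $0<v<1$ from the parabolic maximum principle (the paper cites Protter--Weinberger, Thm.~3.7, which formalizes the sub/super-solution observation you make about $\sigma(v)$ vanishing at $0$ and $1$), and deduce the $C^1$ convergence from the $C^3$ convergence of $\psi$ in Lemma~\ref{s10} together with the uniform bounds $\delta\le\varepsilon\psi_x\le 1-\delta$. The paper is terser at the two delicate points — it dismisses the PDE identity as ``a tedious computation'' and outsources the strict bounds to a named maximum-principle theorem — but your expansion of those steps is faithful to what the paper invokes.
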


\begin{proof}
  By using the differential equation in \eqref{f41} and the identity
  $f(u)=\sigma(u)=u(1-u)$, a tedious computation shows that
  $v$ solves the differential equation in \eqref{adhr*}.  The boundary
  conditions in \eqref{adhr*} follow directly from \eqref{f41} and the
  definition of $v$.  The initial condition in \eqref{adhr*} holds, in
  view of \eqref{r*}, because $\varphi$ is a fixed point of $K_{\rho}$
  and therefore solves the Euler-Lagrange equation \eqref{Deq}.  Since
  the boundary conditions are bounded away from $0$ and $1$, and since
  $\psi_{xx}$ is bounded on compact subsets of $[0,\infty) \times
  [0,1]$, by Theorem 3.7 in \cite{pw} and the remark (ii) following
  it, $0<v<1$, $(t,x)\in[0,\infty)\times[0,1]$.

  By Lemma \ref{s10}, $\psi(t)$ converges in the $C^3([0,1])$ topology
  to $\log [\upbar\rho_\varepsilon / (1-\upbar\rho_\varepsilon)]$,
  uniformly in $\varphi$ . Therefore, by \eqref{r*}, $v(t)$ converges
  in the $C^1([0,1])$ topology to $\upbar{\rho}_\varepsilon$ uniformly
  for $\rho$ in $M^o$.
\end{proof}

\begin{lemma}
\label{t:dotG}
Fix a time interval $[T_1,T_2]$, a smooth path 
$u \in C\big([T_1,T_2]; M^o \big)$, 
and a smooth path $\varphi\in C \big([T_1,T_2];\mc F\big)$ 
with $0<\varepsilon \varphi_x < 1$.  
Then,
\begin{equation}
\label{i1f}
\begin{split}
& \mc G_\varepsilon( u(T_2),\varphi(T_2) )  -
\mc G_\varepsilon( u(T_1),\varphi(T_1) )  \\
& \qquad =\;  \int_{T_1}^{T_2}  
\Big\{ \langle s'(u) -\varphi,u_t \rangle \;-\;
\Big\langle \frac{\varepsilon \varphi_{xx}}
{\varphi_x (1-\varepsilon\varphi_x)}
- \frac 1{1+ e^{\varphi}}  + u , \varphi_t \Big\rangle\Big\}
\, dt \; .
\end{split}
\end{equation}
\end{lemma}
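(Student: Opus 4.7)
My plan is to prove the identity by differentiating $t \mapsto \mc G_\varepsilon(u(t),\varphi(t))$ and integrating the result over $[T_1,T_2]$, so that the statement reduces to the pointwise (in $t$) identity
\begin{equation*}
\frac{d}{dt}\mc G_\varepsilon(u,\varphi)
\;=\; \langle s'(u)-\varphi,u_t\rangle
\;-\; \Big\langle \tfrac{\varepsilon\varphi_{xx}}{\varphi_x(1-\varepsilon\varphi_x)}
-\tfrac{1}{1+e^{\varphi}}+u\,,\,\varphi_t\Big\rangle\;.
\end{equation*}
The smoothness hypothesis on $u$ and $\varphi$, together with the bounds $0<u<1$ (since $u\in M^o$) and $0<\varepsilon\varphi_x<1$, justifies differentiation under the integral sign in the definition \eqref{Grf} of $\mc G_\varepsilon$.

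I would first collect the contributions proportional to $u_t$. Differentiating the terms $s(u)$ and $(1-\rho)\varphi$ with respect to $t$ and using $u$ in place of $\rho$ yields $s'(u)\,u_t - u_t\,\varphi$, which produces exactly $\langle s'(u)-\varphi,u_t\rangle$. Next I collect the contributions proportional to $\varphi_t$ coming from the terms $(1-u)\varphi$ and $-\log(1+e^{\varphi})$; these give $(1-u)\varphi_t - \tfrac{e^{\varphi}}{1+e^{\varphi}}\varphi_t = \big(\tfrac{1}{1+e^{\varphi}}-u\big)\varphi_t$ after using $\tfrac{e^{\varphi}}{1+e^{\varphi}}=1-\tfrac{1}{1+e^{\varphi}}$.

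The only nontrivial step is handling the term $\int_0^1 s'(\varepsilon\varphi_x)\,\varepsilon\varphi_{xt}\,dx$ arising from $s(\varepsilon\varphi_x)$. I would integrate by parts in $x$; the boundary term $\big[s'(\varepsilon\varphi_x)\,\varepsilon\varphi_t\big]_0^1$ vanishes because $\varphi\in\mc F$ has \emph{fixed} boundary values $\varphi_0,\varphi_1$, hence $\varphi_t(t,0)=\varphi_t(t,1)=0$. Using $s''(a)\sigma(a)=1$ for $a\in(0,1)$ (recorded just before \eqref{Grf}) gives $[s'(\varepsilon\varphi_x)]_x = \tfrac{\varepsilon\varphi_{xx}}{\varepsilon\varphi_x(1-\varepsilon\varphi_x)}$, so
\begin{equation*}
\int_0^1 s'(\varepsilon\varphi_x)\,\varepsilon\varphi_{xt}\,dx
\;=\; -\int_0^1 \varphi_t\;\frac{\varepsilon\varphi_{xx}}{\varphi_x(1-\varepsilon\varphi_x)}\,dx\;.
\end{equation*}
Combining this with the other $\varphi_t$ contributions gives precisely $-\big\langle \tfrac{\varepsilon\varphi_{xx}}{\varphi_x(1-\varepsilon\varphi_x)}-\tfrac{1}{1+e^{\varphi}}+u,\varphi_t\big\rangle$.

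The main (and essentially only) thing to check is that the boundary term produced by the integration by parts vanishes; this is where the hypothesis $\varphi\in\mc F$ (forcing the boundary values of $\varphi$ to be time-independent) is used. After that, integrating the pointwise identity from $T_1$ to $T_2$ via the fundamental theorem of calculus yields \eqref{i1f}.
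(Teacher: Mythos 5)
Your proof is correct and fills in exactly the "simple computation" the paper alludes to while omitting its details: you differentiate $t\mapsto\mc G_\varepsilon(u(t),\varphi(t))$, group the $u_t$ and $\varphi_t$ contributions, integrate the $s(\varepsilon\varphi_x)$ term by parts in $x$ (using that the boundary values of $\varphi$ are fixed, hence $\varphi_t$ vanishes at the endpoints, and the identity $s''(a)\sigma(a)=1$), and then apply the fundamental theorem of calculus. This is precisely the expected route.
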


The proof relies on a simple computation and it is omitted.  We remark
that if the paths $u$ and $\varphi$ are chosen so that
$(\varphi(t),u(t))\in \Sigma$, $t\in[T_1,T_2]$, then the second term
on the right hand side of \eqref{i1f} vanishes.  Therefore, the
previous lemma provides an explicit expression for the integral of the
symplectic one-form along any path that lies in $\Sigma$, showing in
particular that the result depends only on the endpoints of the path,
namely that $\Sigma$ is Lagrangian.

\medskip
In view of Lemma \ref{s11}, the time reversal of the function $v$
defined in \eqref{r*} can be chosen as a test path for the variational
problem \eqref{qp}. In the next lemma we compute the action of such
a path.

\begin{lemma}
\label{s13}
Fix a function $\rho$ in $M^o$ and $\varphi$ in $\ms P (\rho)$. 
Let $v$ be the path \eqref{r*} and let 
$u \in \mc U(\upbar\rho_\epsilon)$ be defined by $u(t) := v(-t)$. 
Then, $I (u) = \mc G_\varepsilon (\rho, \varphi) \;-\;
S^o_\varepsilon(\upbar{\rho}_\varepsilon)$. 
\end{lemma}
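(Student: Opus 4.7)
The plan is to compute $I(u) = \lim_{T\to\infty} I_{[-T,0]}(u)$ by identifying $u$ as a solution of the controlled Burgers equation with a canonically associated ``momentum'' $H$, applying Theorem~\ref{s01}, and then converting the resulting expression into a telescoping difference of $\mc G_\epsilon$-values via Lemma~\ref{t:dotG}.

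First, I time-reverse \eqref{adhr*}: setting $\tilde\psi(t,x) := \psi(-t,x)$ and using the identity $u_{xx} = (\sigma(u) s'(u)_x)_x$, one finds that $u(t) = v(-t)$ solves
\begin{equation*}
u_t + f(u)_x - \varepsilon u_{xx} \;=\; -2\varepsilon\,[\sigma(u)\, H_x]_x,
\qquad H := s'(u) - \tilde\psi,
\end{equation*}
with $H$ vanishing at $x=0,1$ because $\tilde\psi(t,i)=\varphi_i=s'(\rho_i)$. By Lemma~\ref{s11}, $u(t)\in M^o$ for every $t\le 0$ and $u\in\mc U(\upbar\rho_\varepsilon)$, so Theorem~\ref{s01} applies and gives $I_{[-T,0]}(u) = \varepsilon\int_{-T}^0\langle H_x,\sigma(u)H_x\rangle\,dt$.

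Next, I observe that the pair $(u(t),H(t))$ lies on the zero set of the Hamiltonian. Indeed, the definition \eqref{r*} of $v$ is literally the requirement that $(\tilde\psi(t),u(t))\in\Sigma$, which, rearranged, is the Euler--Lagrange equation \eqref{Deq} with $\rho = u(t)$ and $\varphi = \tilde\psi(t)$. Thus $\tilde\psi(t)\in\ms P(u(t))$ for each $t$, and Proposition~\ref{t:fp}(iii) yields $\bb H(u(t),H(t))=0$. Integrating the controlled equation against $H$ and integrating by parts using the boundary condition on $H$ gives the algebraic identity $\langle H,u_t\rangle = \bb H(u,H) + \varepsilon\langle H_x,\sigma(u)H_x\rangle$, hence
\begin{equation*}
I_{[-T,0]}(u) \;=\; \int_{-T}^0 \langle H,u_t\rangle\,dt.
\end{equation*}

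Now I apply Lemma~\ref{t:dotG} to the smooth paths $u$ and $\tilde\psi$ on $[-T,0]$. The second bracket in \eqref{i1f} vanishes identically along the path, since its vanishing is precisely the condition $(\tilde\psi(t),u(t))\in\Sigma$ established above. Therefore
\begin{equation*}
\int_{-T}^0\langle H,u_t\rangle\,dt \;=\; \mc G_\varepsilon(\rho,\varphi) \;-\; \mc G_\varepsilon\big(v(T),\psi(T)\big).
\end{equation*}
Finally, I let $T\to\infty$: by Lemma~\ref{s10}, $\psi(T)\to s'(\upbar\rho_\varepsilon)$ in $C^3([0,1])$ with $\varepsilon\psi_x$ remaining uniformly bounded away from $0$ and $1$, and correspondingly $v(T)\to\upbar\rho_\varepsilon$ in $C^1([0,1])$. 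This regularity is more than enough to pass the limit inside the integrand defining $\mc G_\varepsilon$, yielding $\mc G_\varepsilon(v(T),\psi(T))\to \mc G_\varepsilon(\upbar\rho_\varepsilon, s'(\upbar\rho_\varepsilon)) = S^o_\varepsilon(\upbar\rho_\varepsilon)$ by \eqref{f09}. The main delicate point is checking that $\bb H$ vanishes along the path — this is what makes the naive Lagrangian integral collapse into the symplectic one-form and lets the argument mirror the Hamiltonian heuristic of Section~\ref{s:hp}; everything else is either algebraic manipulation or a standard limit.
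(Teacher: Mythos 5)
Your proof is correct and follows essentially the same route as the paper's: time-reverse to get the controlled Burgers equation with momentum $H = s'(u) - \psi^*$, invoke Theorem~\ref{s01} for the action formula, use Proposition~\ref{t:fp}(iii) to kill the Hamiltonian along the path, collapse the integral via Lemma~\ref{t:dotG} (the second bracket vanishing because $(\psi^*(t),u(t))\in\Sigma$), and pass to the limit $T\to\infty$ using Lemmata~\ref{s10}--\ref{s11} and \eqref{f09}. Your observation that the zero-Hamiltonian condition and the vanishing of the second bracket in \eqref{i1f} both encode $(\psi^*(t),u(t))\in\Sigma$ is exactly the remark the paper makes just after Lemma~\ref{t:dotG}.
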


\begin{proof}
By Lemma \ref{s11}, $u$ is a smooth path in $C((-\infty,0];M^o)$,
satisfies the final condition $u(0,\cdot) = \rho(\cdot)$,
and solves
\begin{equation*}
u_t +  f(u)_x = - \varepsilon u_{xx}
+ 2 \varepsilon \big[ \sigma(u) \psi^*_x \big]_x \;,
\end{equation*}
where $\psi^*(t) = \psi(-t)$, $t\le 0$. Equivalently, 
by setting $K = s'(u) - \psi^*$, $u$ solves
\begin{equation}
\label{f06}
u_t +  f(u)_x = \varepsilon u_{xx}
- 2 \varepsilon \big[ \sigma(u) K_x \big]_x \;.
\end{equation}
Fix $T>0$, recall that $u$ satisfies the boundary conditions
$u(t,0)=\rho_0$, $u(t,1)=\rho_1$, $t\in[-T,0]$, 
and observe that $K$ is smooth and
satisfies $K(t,0)=K(t,1)=0$, $t\in[-T,0]$.  
In particular, $K\in \mf H_0^1(\sigma(u))$ and 
therefore, by Theorem~\ref{s01},
\begin{equation*}
I_{[-T,0]}(u) \;=\; \varepsilon   \| K\|_{1,\sigma(u)}^2 
 \;=\;
\varepsilon  
\langle\!\langle K_x, \sigma(u) K_x\>\!\>
\end{equation*}
Recall the definition \eqref{He} of the Hamiltonian $\bb H$.
Multiplying both sides of \eqref{f06} by $K$ and integrating, we get
that
\begin{equation*}
\<K, u_t\> \;-\; \bb H (u,K) \;=\; \varepsilon  \<K_x, \sigma(u) K_x \>
\end{equation*}
so that
\begin{equation*}
I_{[-T,0]}(u) \;=\;  \int_{-T}^0 \Big\{
\< s'(u) - \psi^*, u_t \> \;-\; \bb H \big(u ,s'(u) - \psi^*\big) 
\Big\} \, dt\;.
\end{equation*}
Since $v$ is defined by \eqref{r*}, $u$ is given by the same equation
with $\psi^*$ replacing $\psi$. In particular, by item (iii) 
in Proposition \ref{t:fp}, 
$\bb H \big(u,s'(u) - \psi^*\big)$ vanishes.
Hence, by Lemma~\ref{t:dotG},
\begin{equation}
\label{f25}
I_{[-T,0]}(u) \;=\; \int_{-T}^0 
\<s'(u) - \psi^*, u_t \> \, dt \;=\; \mc G_\varepsilon (\rho, \varphi)
\;-\; \mc G_\varepsilon (v(T), \psi(T)) \;.
\end{equation}
By Lemma \ref{s10}, $\psi(T)$ converges to
$s'(\upbar{\rho}_\varepsilon)$ in the $C^3([0,1])$ topology and by
Lemma \ref{s11}, $v(T)$ converges to $\upbar{\rho}_\varepsilon$ in the
$C^1([0,1])$ topology. Therefore, $\mc G_\varepsilon (v(T), \psi(T))$
converges to $\mc G_\varepsilon (\upbar{\rho}_\varepsilon,
s'(\upbar{\rho}_\varepsilon))$ which is equal to
$S^o_\varepsilon(\upbar{\rho}_\varepsilon)$ by \eqref{f09}. In
conclusion,
\begin{equation*}
I (u) \;=\; \lim_{T\to\infty} I_{[-T,0]}(u) 
\;=\; \mc G_\varepsilon (\rho, \varphi) \;-\; 
S^o_\varepsilon(\upbar{\rho}_\varepsilon)\; ,
\end{equation*}
which proves the lemma.
\end{proof}

It follows from the previous lemma that if $\varphi$ is chosen in
${\ms F}(\rho)$, namely $\varphi$ is a minimizer for $\mc
G_\epsilon(\rho,\cdot)$, then $I(u) = S^o_\epsilon(\rho) -
S^o_\varepsilon(\upbar{\rho}_\varepsilon) = S_\varepsilon (\rho)$.  In
particular, this proves the inequality $\hat V_\varepsilon (\rho)\le
S_\varepsilon (\rho)$ for smooth functions $\rho$ in $M^o$,
By a density argument, we next show this bound holds for any $\rho\in M$.

\begin{proof}[Proof of Theorem~\ref{t:S=V}: the bound $\hat V_\epsilon
  \le S_\epsilon$.]
Fix $\rho\in M$ and denote by $\varphi\in {\ms F}(\rho)$,
$\{\rho^n\}\subset M^o$, and $\{\varphi^n \}\subset \mc F$ the
function and the sequences provided by Lemma \ref{s16}. Denote by
$\psi$ the classical solution to \eqref{f41}, and define $u
:(-\infty,0]\times [0,1]\to [0,1]$ by
\begin{equation*}
u(t) =
\begin{cases}
\rho & \textrm{if $t=0$}\;, \\
v(-t) & \textrm{if $t<0$}\;,
\end{cases}
\end{equation*}
where $v$ is the function defined in \eqref{r*}.  Since $\varphi \in
C^1([0,1])$, $\psi(t) \to \varphi$ in the $C^1$ topology as
$t\downarrow 0$. Since by Lemma \ref{t:int} we have $0< \epsilon \varphi_x <
1$, it is simple to check that the function $u(t)$ converges to $\rho$
in $M$ as $t\uparrow 0$.  Hence, by the convergence of $\psi(t)$ as
$t\to +\infty$ stated in Lemma \ref{s10}, the path $u$ belongs to the
set $\mc U (\upbar\rho_\epsilon)$ introduced in \eqref{Ue}.

Let $\psi^n$ be the solution to \eqref{f41} with $\varphi$ replaced by
$\varphi^n$ and $v^n$ be as defined in \eqref{r*} with $\psi$ replaced
by $\psi^n$. Let finally $u^n: (-\infty,0]\times [0,1] \to [0,1]$ be
defined by $u^n(t)=v^n(-t)$.  In view of the continuity with respect
to the initial condition of the solution to the viscous Burgers
equation \eqref{eq:1} and the uniformity of the convergence as $t\to +
\infty$ stated in Lemma \ref{s11}, the sequence $\{u^n\}$ converges to
$u$ in $\mc U(\upbar\rho_\epsilon)$.  The lower semicontinuity of the
functional $I: \mc U(\upbar\rho_\epsilon) \to [0,+\infty]$, together
with Lemmata \ref{s13} and \ref{s16}, now imply
\begin{equation*}
I(u) \le  \liminf_n I(u^n) =  
\liminf_n \big[ \mc G_\epsilon (\rho^n,\varphi^n)  
- S^o_\epsilon (\upbar\rho_\epsilon)  \big] 
=   \mc G_\epsilon (\rho,\varphi) 
- S^o_\epsilon (\upbar\rho_\epsilon)
=   S_\epsilon(\rho)\;,
\end{equation*}
whence $\hat V_\epsilon(\rho)\le S_\epsilon(\rho)$ and the proof is
concluded.
\end{proof}

\subsection*{Lower bound for the quasi-potential}

Before carrying out the details, we explain the main idea and the novel
difficulty here encountered. 
Fix $\rho\in M$, $T>0$ and a path $u\in C([-T,0];M)$ such that
$u(-T)=\upbar\rho_\epsilon$ and $u(0)=\rho$. We need to show
$I_{[-T,0]}(u)\ge S_\epsilon(\rho)$. Assume that the path $u$ is
smooth, bounded away from zero and one, and satisfies the boundary
conditions $\rho_0$, $\rho_1$ at the endpoints of $[0,1]$.  
By the variational definition \eqref{e:2.6} 
of the action functional and the definition \eqref{He} of the
Hamiltonian $\bb H$, for each smooth function 
$H : [-T,0]\times [0,1]\to \bb R$ vanishing at the boundary, we have 
\begin{equation}
\label{f04}
\begin{split}
I_{[-T,0]}(u) & \; \ge \; 
\<\!\< u_t +f(u)_x -\epsilon u_{xx}, H\>\!\> 
- \epsilon \<\!\< H_x,\sigma(u) H_x\>\!\>
\\
& \;= \; \int_{-T}^0 \big[ \langle H, u_t\rangle  - \bb H (u,H) \big]
\, dt \;. 
\end{split}
\end{equation}
Assume now that for each $t\in [-T,0]$ there exists a unique solution
$\varphi(t)$ to the Euler-Lagrange equation \eqref{Deq} with $\rho$
replaced by $u(t)$. Assume furthermore that $\varphi(t)$ is smooth; in
this case we may choose above $H =s'(u ) -\varphi $. In view of item
(iii) in Proposition \ref{t:fp} and Lemma \ref{t:dotG} we then
conclude $I_{[-T,0]}(u) \ge S_\epsilon(\rho)$.  If for each $\rho\in
M$ the functional $\mc G(\rho,\cdot)$ has a unique critical point, it
is not difficult to turn the previous argument into a proof
\cite{bdgjl3,bgl1}.  On the other hand, in the case here discussed we
have to face the lack of uniqueness of \eqref{Deq}: if we choose the
``wrong'' $\varphi$, the bound we would get by the previous argument
will not be sharp. We shall overcome this problem by discretizing the
time interval $[-T,0]$ and choosing a piecewise constant path
$\varphi(t)$.  As we show, the previous argument gives then the sharp
bound provided we choose the optimal $\varphi$ at the endpoint of each
time step.

We start by recalling the following density result, which is proven 
in Theorem 5.1 of \cite{blm1}.

\begin{lemma}
\label{dId} 
Fix $T>0$ and a path $u$ in $C([-T,0]; M)$ such that $u(-T) =
\upbar\rho_\varepsilon$ and $I_{[-T,0]}(u) < \infty$.  There exists a
sequence $\{u^n\}\subset C([-T,0];M)$ of smooth functions $u^n :[-T,0]
\times [0,1] \to (0,1)$ converging to $u$ in $C([-T,0]; M)$ such that
$u^n (-T,\cdot) =\upbar\rho_\varepsilon(\cdot)$, $u^n(t,0) = \rho_0$,
$u^n(t, 1) = \rho_1$, $t\in [-T,0]$, and $I_{[-T,0]}(u^n)$ converges
to $I_{[-T,0]}(u)$.
\end{lemma}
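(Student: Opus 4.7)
The plan is to construct the approximating sequence $\{u^n\}$ via three successive regularization steps, following the strategy of Theorem~5.1 in \cite{blm1}.

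First, I would bound $u$ uniformly away from $0$ and $1$ via the convex combination $u^{(1)}_\delta := (1-\delta)u + \delta\,\upbar{\rho}_\varepsilon$ for small $\delta>0$. Since $0\le u\le 1$ and $\rho_0\le\upbar{\rho}_\varepsilon\le\rho_1$, the path $u^{(1)}_\delta$ takes values in a compact subinterval of $(0,1)$, still satisfies $u^{(1)}_\delta(-T)=\upbar{\rho}_\varepsilon$, and retains the correct spatial boundary values. To control the action, I would use the variational definition \eqref{e:2.6}: for a near-optimal test field $H$ in the representation of $I^\varepsilon_{[-T,0]}(u)$, plug the same $H$ into $L^\varepsilon_{u^{(1)}_\delta}(H) - \varepsilon \<\!\<H_x,\sigma(u^{(1)}_\delta)H_x\>\!\>$ and use the Lipschitz character of $f$ and $\sigma$ together with the identity $\sigma(u^{(1)}_\delta) - (1-\delta)\sigma(u) = O(\delta)$ to obtain $\limsup_{\delta\downarrow 0}I^\varepsilon_{[-T,0]}(u^{(1)}_\delta)\le I^\varepsilon_{[-T,0]}(u)$. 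The matching lower bound follows from the lower semicontinuity of $I^\varepsilon_{[-T,0]}$ stated in Theorem~\ref{s01}, since $u^{(1)}_\delta\to u$ in $C([-T,0];M)$.

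Next, I would regularize in time. Extend $u^{(1)}_\delta$ to $[-T-1,-T]$ by the constant value $\upbar{\rho}_\varepsilon$; since $\upbar{\rho}_\varepsilon$ is a stationary solution, this extension contributes no additional action. Then set $u^{(2)}_{\delta,\eta}(t,x) := \int u^{(1)}_\delta(t+s,x)\chi_\eta(s)\,ds$ for an approximate identity $\chi_\eta$ supported in $[-\eta,0]$, $\eta\in(0,1)$. This path is smooth in time and preserves all required boundary data. Using the representation $I^\varepsilon_{[-T,0]}(u)=(4\varepsilon)^{-1}\|u_t+f(u)_x-\varepsilon u_{xx}\|^2_{-1,\sigma(u)}$ from Theorem~\ref{s01}, and Jensen's inequality applied to the convex quadratic $\ell\mapsto\|\ell\|^2_{-1,\sigma}$, I would deduce $\limsup_{\eta\downarrow 0}I^\varepsilon_{[-T,0]}(u^{(2)}_{\delta,\eta})\le I^\varepsilon_{[-T-1,0]}(u^{(1)}_\delta)=I^\varepsilon_{[-T,0]}(u^{(1)}_\delta)$, exploiting the uniform continuity of $\sigma$ on the compact range of $u^{(1)}_\delta$.

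Finally, I would regularize in space while preserving the inhomogeneous Dirichlet values: subtract the linear interpolation $\chi(x):=(1-x)\rho_0+x\rho_1$, convolve the resulting function (which vanishes at $x\in\{0,1\}$) with the kernel of the Dirichlet heat semigroup at small time $\zeta$, and add $\chi$ back. This produces a smooth path $u^{(3)}_{\delta,\eta,\zeta}$ with the correct boundary values everywhere and $u^{(3)}_{\delta,\eta,\zeta}(-T) = \upbar{\rho}_\varepsilon$ (noting that $\upbar{\rho}_\varepsilon-\chi$ vanishes at $x\in\{0,1\}$). A Jensen-type argument again yields $\limsup_{\zeta\downarrow 0}I^\varepsilon_{[-T,0]}(u^{(3)}_{\delta,\eta,\zeta})\le I^\varepsilon_{[-T,0]}(u^{(2)}_{\delta,\eta})$. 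Extracting a diagonal subsequence $n\mapsto(\delta_n,\eta_n,\zeta_n)\downarrow 0$ and invoking once more the lower semicontinuity of $I^\varepsilon_{[-T,0]}$ yields the desired $\{u^n\}$. The main obstacle will be to make the spatial regularization step rigorous: unlike the time convolution, the spatial mollification interacts nontrivially with both the nonlinear current $f(u)_x$ and the state-dependent mobility $\sigma(u)$, so the Jensen bound must be supplemented with error estimates controlling the commutators of the convolution with $f$ and $\sigma$. Here the lower bound $\sigma(u^{(1)}_\delta)\ge c(\delta)>0$ produced by the first step becomes essential to dominate these remainders by quantities vanishing as $\zeta\downarrow 0$, which is precisely why the three regularizations must be performed in this specific order.
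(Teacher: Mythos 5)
The paper itself does not prove this lemma; it is quoted with the citation ``which is proven in Theorem 5.1 of \cite{blm1}.'' So there is no in-paper proof to compare against, and the question is whether your proposed construction is sound.

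Your first two steps (convex interpolation toward $\upbar\rho_\varepsilon$, then time-mollification after extending by the stationary profile) are the standard devices and look viable, modulo the commutator estimates for $f$ and $\sigma$ that you correctly flag. But the third step has a genuine gap: applying the Dirichlet heat semigroup $e^{\zeta\Delta_D}$ for a \emph{fixed} $\zeta>0$ at every time $t$ destroys the required terminal condition $u^n(-T)=\upbar\rho_\varepsilon$. Your parenthetical remark ``noting that $\upbar\rho_\varepsilon-\chi$ vanishes at $x\in\{0,1\}$'' does not help: a function vanishing at the endpoints is not fixed by the Dirichlet heat flow, and since $\upbar\rho_\varepsilon-\chi$ is neither identically zero nor an eigenfunction with zero eigenvalue (none exist), we have $e^{\zeta\Delta_D}(\upbar\rho_\varepsilon-\chi)\ne\upbar\rho_\varepsilon-\chi$ for every $\zeta>0$. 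Consequently $u^{(3)}_{\delta,\eta,\zeta}(-T)\ne\upbar\rho_\varepsilon$, and this constraint is explicitly part of the statement you are trying to prove.

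To repair the step you would need either (a) a time-dependent regularization parameter $\zeta(t)$ with $\zeta(-T)=0$, increasing smoothly away from $-T$ --- this preserves $u^{(3)}(-T)=\upbar\rho_\varepsilon$ and still yields spatial smoothness for $t>-T$, while $\upbar\rho_\varepsilon$ itself is already smooth; or (b) keep $\zeta$ constant, note that $u^{(3)}(-T)\to\upbar\rho_\varepsilon$ in $\mc H^1_0$ as $\zeta\downarrow 0$, and splice a short low-action connecting segment at the left end using the same estimate as Lemma~\ref{s14}, analogously to what is done in the proof of $V_\varepsilon=\hat V_\varepsilon$. Either fix costs little, but as written your step~3 does not produce paths in the admissible class, so the diagonal extraction at the end is vacuous.
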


\begin{proof}[Proof of Theorem~\ref{t:S=V}: the bound $S_\epsilon 
  \le V_\epsilon$.]
Fix a function $\rho$ in $M$, $T>0$ and a path $u$ in $C([-T,0]; M)$
such that $u(-T) = \upbar\rho_\varepsilon$, $u(0)=\rho$. We need to
show that $I_{[-T,0]}(u)\ge S_\varepsilon(\rho)$.

Assume firstly that $\rho$ belongs to $M^o$ and that $u:[-T,0]\times
[0,1]\to [0,1]$ is a smooth path bounded away from zero and one and
satisfies $u(t,0)=\rho_0$, $u(t,1)=\rho_1$, $t\in [-T,0]$. In this
case, as we have seen above, \eqref{f04} holds for any $H$ in $\mf
H_0^1(\sigma(u))$.  Consider a partition $[-T,0) = \bigcup_{k=1}^n
[-T_{k},-T_{k-1})$ with $T_0=0$ and $T_{n}=T$.  For $k=1,\cdots,n$,
choose $\varphi^k \in \ms F(u(-T_k))$ namely, $\varphi^k$ is a
minimizer for $\mc G_\epsilon( u(-T_k) ,\cdot)$.  In view of Lemma
\ref{s02}, $\varphi^k$ belongs also to $\ms P (u(-T_k))$.  Define the
path $\varphi$ piecewise constant in $[-T,0]$ by $\varphi (t)
=\varphi^{k}$ for $t\in [-T_{k},-T_{k-1})$, $k=1,\cdots,n$ and
$\varphi(0) = \varphi^1$.

Since the path $u$ is smooth, by the definition of $\varphi$,
$\varphi(t,0)=\varphi_0$, $\varphi(t,1)=\varphi_1$, $t\in[-T,0]$, and
$s'(u)-\varphi$ is a smooth function in space, piecewise smooth in
time which vanishes at the endpoints of $[0,1]$. In particular,
$s'(u)-\varphi$ belongs to $\mf H_0^1(\sigma(u))$.  By choosing
$H=s'(u)-\varphi$ in \eqref{f04} we then get
\begin{equation*}
I_{[-T,0]}(u) \; \ge \;
\sum_{k=1}^n \int_{-T_{k}}^{-T_{k-1}}
\Big[ \langle s'( u ) - \varphi^k \,,\, u_t  \rangle  
- \bb H \big( u ,  s'(u ) - \varphi^k\big)
\Big] \, dt\; .
\end{equation*}
By the choice of $\varphi^k$ and item (iii) in Proposition \ref{t:fp},
$\bb H \big(u(t) , s'(u(t)) - \varphi^k\big)$ vanishes for $t=-T_{k}$.
A simple computation, based on the bounds stated in item (ii) of
Proposition \ref{t:fp}, shows that the map $[-T_k,-T_{k-1}) \ni t
\mapsto \bb H \big( u(t) , s'(u(t)) - \varphi^k\big)$ is Lipschitz
with a Lipschitz constant depending on $u$ but independent of
$\varphi^k$. Hence,
\begin{equation*}
\lim_{n\to\infty} \sum_{k=1}^n \int_{-T_{k}}^{-T_{k-1}}
\bb H \big( u ,  s'(u) - \varphi^k\big)\, dt\; =\; 0
\end{equation*}
provided the mesh of the partition vanishes as $n\to\infty$.

On the other hand, since $\varphi(t)$ is constant in the interval
$[-T_k, -T_{k-1})$, Lemma~\ref{t:dotG} yields
\begin{equation*}
\begin{split}
\sum_{k=1}^n \int_{-T_{k}}^{-T_{k-1}} \langle s'( u(t) ) -
\varphi^k , u_t \rangle \, dt \;=\; \sum_{k=1}^n \Big[ \mc
G_\varepsilon (u(-T_{k-1}), \varphi^k) - \mc G_\varepsilon
(u({-T_k}), \varphi^k) \Big] \;.
\end{split}
\end{equation*}
Since $\varphi^{k-1} \in \ms F(u({-T_{k-1}}))$, it follows $\mc
G_\varepsilon (u({-T_{k-1}}), \varphi^k) \ge \mc G_\varepsilon
(u(-T_{k-1}), \varphi^{k-1})$.  The previous expression is thus
bounded below by the telescopic sum
\begin{equation*}
\sum_{k=1}^n \Big[ \mc G_\varepsilon (u(-T_{k-1}), \varphi^{k-1}) 
-  \mc G_\varepsilon (u({-T_k}), \varphi^k) \Big]
\;=\;  \mc G_\varepsilon (u(0), \varphi^1) - 
\mc G_\varepsilon (u({-T}), \varphi^n) \;.
\end{equation*}
Since $u({-T}) = \bar\rho_\varepsilon$, by the choice of $\varphi^n$,
we have $\mc G_\varepsilon (\bar\rho_\varepsilon, \varphi^n) =
\inf_\psi \mc G_\varepsilon (\bar\rho_\varepsilon, \psi) =
S^o_\varepsilon(\bar\rho_\varepsilon)$.  On the other hand, since
$u(0) = \rho$, we have $\mc G_\varepsilon (u(0), \varphi^1) = \mc
G_\varepsilon (\rho, \varphi^1) \ge S^o_\epsilon(\rho)$.  By taking
the limit $n\to\infty$, the previous bounds imply
\begin{equation*}
I_{[-T,0]}(u) \;\ge\;
S^o_\varepsilon(\rho)  \;-\; S^o_\varepsilon(\upbar{\rho}_\varepsilon) 
\;=\; S_\varepsilon ( \rho )\;.
\end{equation*}

Let now $\rho\in M$ be arbitrary and consider an arbitrary path $u \in
C([-T,0];M)$ such that $u(0)=\rho$ and $u(-T) =
\upbar{\rho}_\varepsilon$.  Since we can assume $I_{[-T,0]}(u)
<\infty$, by Lemma~\ref{dId}, there exists a sequence of smooth paths
$u^n$ bounded away from zero and one which converges to $u$ in
$C([-T,0];M)$ and such that $\lim_{n} I_{[-T,0]}(u^n) =
I_{[-T,0]}(u)$. The lower semicontinuity of $S_\varepsilon$ on $M$,
see Theorem \ref{t:1-1}, and the result for smooth paths yield
\begin{equation*}
I_{[-T,0]}(u) \;=\; \lim_{n} I_{[-T,0]}(u^n)  \;\ge\;
\liminf_{n} S_\varepsilon ( u^n(0) ) 
\;\ge\; S_\varepsilon ( u(0) )  \;=\;  S_\varepsilon (\rho ) \;,
\end{equation*}
which concludes the proof.
\end{proof}

\section{The inviscid limit}
\label{sec6}

In this section we discuss the inviscid limit $\epsilon\downarrow 0$.
We first discuss the variational convergence of the functional $\mc
G_\epsilon$ to $\mc G$.  By analyzing the limiting variational problem
\eqref{s-in} and using a perturbation argument we then show, provided
$\epsilon$ is small enough, that there exists $\rho\in M$ such that $\mc
G_\epsilon(\rho,\cdot)$ admits at least two minimizers.

\subsection*{Variational convergence of $\mc G_\epsilon$}

By standard properties of $\Gamma$--convergence, Theorems \ref{t:gconv} 
and \ref{t:convmin} are corollaries of the following result. 

\begin{theorem}
\label{t:gcg}
Let $\mc G_\epsilon : M\times \mc F\to (-\infty,+\infty]$ be the
functional defined in \eqref{Grf}. As $\epsilon\downarrow 0$, the
family $\{\mc G_\epsilon\}_{\epsilon>0}$ $\Gamma$--converges to the
functional $\mc G$ defined in \eqref{Grf-in}.
\end{theorem}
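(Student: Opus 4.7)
The plan is to prove $\Gamma$-convergence by establishing the $\Gamma$-$\liminf$ and $\Gamma$-$\limsup$ inequalities separately, starting from the identity
\begin{equation*}
\mc G_\epsilon(\rho,\varphi) - \mc G(\rho,\varphi) \;=\; \int_0^1 s(\epsilon \varphi_x)\, dx,
\end{equation*}
valid when $d\varphi$ is absolutely continuous with density bounded by $\epsilon^{-1}$ (and equal to $+\infty$ otherwise), together with the fact that $s\in [-\log 2,0]$ on $[0,1]$ with $s(0)=s(1)=0$.  For the $\Gamma$-$\liminf$ inequality, I would fix $(\rho^\epsilon,\varphi^\epsilon)\to(\rho,\varphi)$ in $M\times \mc F$ and first show $\liminf_\epsilon \mc G(\rho^\epsilon,\varphi^\epsilon)\ge \mc G(\rho,\varphi)$: convexity of $s$ gives the weak* lower semicontinuity of $\rho\mapsto \int s(\rho)$, while the statement recalled after \eqref{Fa} that $\varphi^n\to \varphi$ in $\mc F$ implies $\varphi^n\to\varphi$ a.e., combined with the uniform bound $\varphi^\epsilon \in [\varphi_0,\varphi_1]$, upgrades this via dominated convergence to strong $L^1$ convergence, so that the cross term $\int (1-\rho^\epsilon)\varphi^\epsilon\,dx$ and $\int \log(1+e^{\varphi^\epsilon})\,dx$ converge.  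The extra term is then handled by Jensen's inequality applied to the convex $s$:
\begin{equation*}
\int_0^1 s(\epsilon \varphi^\epsilon_x)\,dx \;\ge\; s\!\left(\int_0^1 \epsilon \varphi^\epsilon_x\,dx\right) \;\ge\; s\big(\epsilon(\varphi_1-\varphi_0)\big) \;\longrightarrow\; 0,
\end{equation*}
using that $\int_0^1 \varphi^\epsilon_x\,dx \le \varphi_1-\varphi_0$ and that $s$ is continuous at zero with $s(0)=0$.

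For the $\Gamma$-$\limsup$ inequality, given $(\rho,\varphi)\in M\times \mc F$ I would take $\rho^\epsilon=\rho$ and construct a recovery $\varphi^\epsilon \in \mc F$ with slope a.e.\ in $\{0,\epsilon^{-1}\}$, so that $s(\epsilon \varphi^\epsilon_x) = 0$ a.e.\ and therefore $\mc G_\epsilon(\rho,\varphi^\epsilon) = \mc G(\rho,\varphi^\epsilon)$.  Since $\mc G(\rho,\cdot)$ is continuous on the compact set $\mc F$ (by boundedness of the integrand, a.e.\ convergence under the $\mc F$-topology, and dominated convergence), it suffices to arrange $\varphi^\epsilon\to\varphi$ in $\mc F$.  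By density of the step functions $\varphi(x) = \varphi_0 + \sum_{k=1}^N m_k \mathbf{1}_{\{x\ge x_k\}}$, $\sum_k m_k = \varphi_1-\varphi_0$, in $\mc F$ and a standard diagonalization, the problem reduces to such step $\varphi$.  For these, I would replace each jump at $x_k$ by a linear ramp of slope $1/\epsilon$ supported on an interval of length $\epsilon m_k$ adjacent to $x_k$; for $\epsilon$ small enough these ramps are pairwise disjoint and contained in $[0,1]$, yielding $\varphi^\epsilon \in \mc F$ with $\epsilon \varphi^\epsilon_x \in \{0,1\}$ a.e.\ and $\varphi^\epsilon\to\varphi$ at every continuity point of $\varphi$, hence in $\mc F$.

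The main obstacle is that a direct recovery sequence for a general $\varphi\in \mc F$ whose associated measure has a nontrivial absolutely continuous component necessarily involves intermediate slope values, requiring a separate asymptotic argument that $\int s(\epsilon \varphi^\epsilon_x)\,dx\to 0$.  The route outlined above sidesteps this by combining the clean atomic construction with the density of step functions and the continuity of $\mc G(\rho,\cdot)$, so that the only essential quantitative input is the Jensen estimate in the $\liminf$ step, which exploits the convexity of $s$ and its vanishing at $0$.
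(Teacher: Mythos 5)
Your proof is correct and follows essentially the same structure as the paper's: the $\Gamma$-$\liminf$ argument (convexity of $s$ for lower semicontinuity of $\int s(\rho)$, continuity of the cross terms under the joint $M\times\mc F$ convergence, and Jensen to dispose of $\int s(\varepsilon\varphi_x)$) is identical. The only real difference is in the $\Gamma$-$\limsup$ step: the paper reduces to $\varphi\in\mc F\cap C^1$ satisfying the boundary conditions, for which the \emph{constant} sequence works since $\int s(\varepsilon\varphi_x)\,dx\to 0$ by boundedness of $\varphi_x$, whereas you reduce to step functions and replace each jump of height $m_k$ by a ramp of slope $\varepsilon^{-1}$ on an interval of length $\varepsilon m_k$, so that $\varepsilon\varphi^\varepsilon_x\in\{0,1\}$ a.e.\ and $s(\varepsilon\varphi^\varepsilon_x)\equiv 0$. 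Both are legitimate: the paper's choice of dense class makes the recovery sequence trivial (constant in $\varepsilon$) but requires the small computation $\int s(\varepsilon\varphi_x)\to 0$; yours makes the excess term vanish identically but requires building the ramps, checking disjointness for small $\varepsilon$, and invoking a diagonalization. One should just note for your construction that ramps anchored at $x_k$ near the endpoints of $[0,1]$ must be placed on the appropriate side (e.g., to the left if $x_k=1$), and that $s(1)=0$ is what makes the constraint $\varepsilon\varphi^\varepsilon_x\le 1$ harmless even though the density saturates the admissible bound.
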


\begin{proof}
We start by showing the $\Gliminf$ inequality.  Fix a sequence
$\epsilon_n\downarrow 0$, $(\rho,\varphi)\in M\times \mc F$, and a
sequence $\{(\rho^n,\varphi^n)\} \subset M\times \mc F$ converging to
$(\rho,\varphi)$. We need to show $\liminf_n \mc
G_{\epsilon_n}(\rho^n, \varphi^n)\ge \mc G (\rho,\varphi)$.  The
convexity of the real function $s$ trivially implies $\liminf_{n}
\int_{0}^{1} s(\rho^n) dx \ge \int_{0}^{1} s(\rho) \, dx$.  Likewise,
by Jensen inequality, $\int_{0}^{1} s(\varepsilon_n \varphi_x^n) \,dx
\ge s( \varepsilon_n [\varphi_1-\varphi_0])$ which vanishes as
$\varepsilon_n\downarrow 0$.  To conclude the proof, it remains to
observe that the convergence of $\rho^n$ to $\rho$ in $M$ and the one
of $\varphi^n$ to $\varphi$ in $\mc F$ implies the convergence of
$\int_{0}^{1} \big[ (1-\rho^n)\, \varphi^n - \log (1+e^{\varphi^n})
\big] \, dx$.

We next show the $\Glimsup$ inequality.  Fix a sequence
$\epsilon_n\downarrow 0$ and $(\rho,\varphi)\in M\times \mc F$. We
need to exhibit a sequence $\{(\rho^n,\varphi^n)\} \subset M\times \mc
F$ converging to $(\rho,\varphi)$ such that $\limsup_n \mc
G_{\epsilon_n}(\rho^n,\varphi^n) \le \mc G (\rho,\varphi)$.  Consider
first the case in which $\varphi\in \mc F$ is smooth, say $C^1$, and
satisfies $\varphi(0)=\varphi_0$, $\varphi(1)=\varphi_1$.  We then
claim a \emph{recovering sequence} is simply given by the constant
sequence $(\rho^n,\varphi^n)=(\rho,\varphi)$. Indeed, by the
smoothness of $\varphi$, for such a sequence we have $\int_{0}^{1}
s(\varepsilon_n \varphi_x^n) \,dx \to 0$.  The proof is now completed
by a density argument.  More precisely, it is enough to observe that,
given $\varphi\in\mc F$, there exists a sequence $\{\varphi^k\}\subset
\mc F$ converging to $\varphi$ such that $\varphi^k\in C^1([0,1])$,
$\varphi^k(0)=\varphi_0$, $\varphi^k(1)=\varphi_1$, and $\lim_k \mc
G(\rho,\varphi^k)=\mc G(\rho,\varphi)$.
\end{proof}

Next result is a straightforward consequence of the the previous proof
because the sequence used in the $\Glimsup$ inequality is constant in
$\rho$.

\begin{corollary}
\label{s18}
For every $\rho$ in $M$, $\mc G_\epsilon(\rho, \cdot)$
$\Gamma$--converges to $\mc G(\rho, \cdot)$ as $\epsilon\downarrow 0$.
\end{corollary}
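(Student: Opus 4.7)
The plan is to derive both $\Gamma$-convergence inequalities for the fiber-wise functional $\mc G_\epsilon(\rho,\cdot)$ as immediate consequences of the joint $\Gamma$-convergence established in Theorem~\ref{t:gcg}. The point is that, for fixed $\rho\in M$, the constant sequence $\rho^n\equiv \rho$ in $M$ can be used throughout, so the joint statement reduces to the fiber-wise statement once one checks that the recovering sequence in the joint case does not need to vary $\rho$.

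For the $\Gliminf$ inequality, I would proceed directly. Fix $\rho\in M$ and $\varphi\in\mc F$, and let $\epsilon_n\downarrow 0$, $\{\varphi^n\}\subset \mc F$ with $\varphi^n\to\varphi$ in $\mc F$. Applying the joint $\Gliminf$ inequality of Theorem~\ref{t:gcg} to the sequence $(\rho,\varphi^n)\to (\rho,\varphi)$ in $M\times \mc F$ yields
\begin{equation*}
\liminf_n \mc G_{\epsilon_n}(\rho,\varphi^n) \;\ge\; \mc G(\rho,\varphi)\;,
\end{equation*}
which is the desired bound.

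For the $\Glimsup$ inequality, I would inspect the construction used in the proof of Theorem~\ref{t:gcg}: given $(\rho,\varphi)\in M\times \mc F$, the recovering sequence was built by first approximating $\varphi$ in $\mc F$ by a sequence $\{\varphi^k\}\subset \mc F\cap C^1([0,1])$ satisfying the boundary conditions $\varphi^k(0)=\varphi_0$, $\varphi^k(1)=\varphi_1$ and $\mc G(\rho,\varphi^k)\to \mc G(\rho,\varphi)$, and then extracting a diagonal subsequence $\varphi^{k(n)}$ so that $\mc G_{\epsilon_n}(\rho,\varphi^{k(n)}) \le \mc G(\rho,\varphi^{k(n)}) + 1/n$ (using that $\int_0^1 s(\epsilon_n \varphi^k_x)\,dx\to 0$ for each fixed smooth $\varphi^k$). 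Since this procedure keeps the first coordinate equal to the given $\rho$, the very same sequence $\varphi^n:=\varphi^{k(n)}$ is a recovering sequence for the fiber-wise $\Gamma$-limit at $\rho$, giving
\begin{equation*}
\limsup_n \mc G_{\epsilon_n}(\rho,\varphi^n) \;\le\; \mc G(\rho,\varphi)\;.
\end{equation*}

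There is no substantive obstacle here; the only point to verify is the density statement used inside the $\Glimsup$ argument, namely that any $\varphi\in \mc F$ is approximated in $\mc F$ by smooth functions $\varphi^k$ matching the boundary conditions and with $\mc G(\rho,\varphi^k)\to \mc G(\rho,\varphi)$. This is a standard mollification plus boundary-interpolation argument: weak convergence of the measures $d\varphi^k$ to $d\varphi$ gives convergence of the continuous-in-$\varphi$ terms $\int_0^1[(1-\rho)\varphi-\log(1+e^\varphi)]\,dx$ (using that $\varphi$ is bounded so uniform convergence of the integrand against $\varphi$ holds), while the term $\int_0^1 s(\rho)\,dx$ does not depend on $\varphi$. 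Thus the corollary follows from Theorem~\ref{t:gcg} with essentially no extra work.
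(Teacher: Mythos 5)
Your proposal is correct and takes essentially the same route as the paper, which also observes that the corollary follows because the recovering sequence constructed in the $\Glimsup$ part of the proof of Theorem~\ref{t:gcg} keeps the $\rho$-coordinate constant (while the $\Gliminf$ inequality for the fiber-wise statement is a trivial specialization of the joint one). You have merely spelled out in more detail what the paper states in one line.
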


\begin{proof}[Proof of Theorem~\ref{t:convmin}]
  Since $\mc F$ is compact, Theorem \ref{t:convmin} follows from
  Theorem \ref{t:gcg} and \cite[Thm.~1.21]{Braides}.
\end{proof}

To deduce Theorem~\ref{t:gconv}, we only need to ``project''
Theorem~\ref{t:gcg} to the first variable.  For completeness, we
detail below the proof.

\begin{proof}[Proof of Theorem~\ref{t:gconv}]
We first show that, as $\epsilon\downarrow 0$, the family of
functionals $\{S^o_\epsilon\}_{\epsilon>0}$ $\Gamma$--converges to
$S^o$. We start by proving the $\Gliminf$ inequality.  Fix a sequence
$\epsilon_n\downarrow 0$, $\rho\in M$ and a sequence
$\{\rho^n\}\subset M$ converging to $\rho$.  In view of the
compactness of $\mc F$ and the lower semicontinuity of $\mc
G_{\epsilon_n}(\rho^n,\,\cdot\,)$, there exists a sequence
$\{\varphi^n\}\subset \mc F$ such that $S^o_{\epsilon_n} (\rho^n) =\mc
G_{\epsilon_n}(\rho^n,\varphi^n)$.  Again by the compactness of $\mc
F$, by taking if necessary a subsequence, there exists $\varphi\in\mc
F$ such that $\varphi^n\to\varphi$. By Theorem~\ref{t:gcg} we then
deduce
\begin{equation*}
\liminf_n S^o_{\epsilon_n} (\rho^n) =
\liminf_n \mc G_{\epsilon_n}(\rho^n,\varphi^n)
\ge \mc G (\rho,\varphi) \ge S^o(\rho) \; . 
\end{equation*}
We next prove the $\Glimsup$ inequality.  Fix a sequence
$\epsilon_n\downarrow 0$, $\rho\in M$ and choose the constant sequence
$\rho^n=\rho$.  By Corollary~\ref{s18}, the compactness of $\mc F$,
and \cite[Theorem.~1.21]{Braides}
\begin{equation*}
\lim_n S^o_{\epsilon_n} (\rho) =
\lim_n \: \inf_{\varphi} \: \mc G_{\epsilon_n}(\rho,\varphi)
= \inf_{\varphi} \: \mc G (\rho,\varphi)
= S^o(\rho) \; . 
\end{equation*}

To complete the proof we need to show that
$S^o_\epsilon(\upbar{\rho}_\varepsilon) \to S^o(\upbar{\rho})$ as
$\epsilon\downarrow 0$. While this statement can be proven by using
\eqref{f09} and the explicit expression of $\mc G_\epsilon$ and $\mc
G$, we next give an argument again based on Theorem~\ref{t:gcg} and
the fact that $\upbar{\rho}_\epsilon$ converges to $\upbar{\rho}$ in
$L^1([0,1])$, pointed out just before \eqref{Grf-in}.  In the case
$\rho_0+\rho_1=1$, by the latter statement we mean that
$\upbar{\rho}_\epsilon$ converges to the stationary entropic solution
with a shock placed at $x=1/2$.  Since $\upbar{\rho}_\varepsilon\to
\upbar{\rho}$ in $M$, the $\Gliminf$ inequality proven above yields
$\liminf_\epsilon S^o_\epsilon(\upbar{\rho}_\varepsilon)\ge
S^o(\upbar{\rho})$. To prove the other inequality, fix a smooth
$\varphi\in\mc F$ so that $\int_0^1s(\epsilon\varphi_x) \, dx$
vanishes as $\epsilon\downarrow 0$.  Since $\upbar{\rho}_\epsilon$
converges to $\upbar{\rho}$ in $L^1([0,1])$, $\int_0^1
s(\upbar{\rho}_\epsilon)\, dx$ converges to $\int_0^1
s(\upbar{\rho})\, dx$. Therefore
\begin{equation*}
\limsup_\epsilon S^o_\epsilon(\upbar{\rho}_\varepsilon)
\le  \limsup_\epsilon
\mc G_\epsilon(\upbar{\rho}_\varepsilon,\varphi) 
=  \mc G(\upbar{\rho},\varphi) \;.     
\end{equation*}
By optimizing on $\varphi$ we then deduce $\limsup_\epsilon
S^o_\epsilon(\upbar{\rho}_\varepsilon)\le S^o(\upbar{\rho})$, which
concludes the proof.
\end{proof}

\medskip The variational problem appearing in \eqref{s-in} is much
simpler than the one in \eqref{ssg}. In fact, the former can be
reduced to a one-dimensional minimum problem. More precisely, we can
restrict the infimum in \eqref{s-in} to step functions $\varphi$.
Denote by $\tilde{\mc{G}} (\rho,y)$ the functional ${\mc{G}}
(\rho,\varphi)$ defined in \eqref{Grf-in} evaluated at
$\varphi=\varphi^{(y)}$, $y\in[0,1]$, where $\varphi^{(y)}(x):=
\varphi_0 \mb 1_{[0,y)}(x) + \varphi_1 \mb 1_{[y,1]}(x)$.  In other
words, let $\tilde{\mc G} : M \times [0,1]\to \bb R$ be the functional
defined by
\begin{equation*}
\begin{split}
\tilde{\mc G} (\rho,y) \; :=\; 
& \int_{0}^1 s(\rho) \, dx \; +\;
\varphi_0 \int_{0}^y  (1-\rho)\, dx
\;+\; \varphi_1 \int_{y}^1 (1-\rho)\, dx \\
& \; -\; y  \log\big(1+e^{\varphi_0}\big)
\;-\; (1-y) \log\big(1+e^{\varphi_1}\big)\;.
\end{split}
\end{equation*}

\begin{proposition}
\label{t:gin=tgin}
Fix $\rho\in M$. Then, 
\begin{equation}
\label{dav}
\inf\,  \big\{ \mc G (\rho,\varphi) \,,\: \varphi \in \mc F \big\}
\;=\;
\inf\, \big\{ \tilde{\mc G} (\rho,y)\,,\: y\in[0,1] \big\}  \;.
\end{equation}
\end{proposition}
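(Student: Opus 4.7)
My plan is to reduce the claim to Jensen's inequality applied pointwise to the convex function $\xi \mapsto \log(1+e^{\xi})$.

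The inequality $\inf_{\varphi \in \mc F} \mc G(\rho,\varphi) \le \inf_{y \in [0,1]} \tilde{\mc G}(\rho,y)$ is immediate: each step function $\varphi^{(y)}$ belongs to $\mc F$ (it corresponds to the measure $\mu = (\varphi_1-\varphi_0)\delta_y$), and a direct computation in which one splits the integrals $\int_0^1 (1-\rho)\varphi^{(y)}\, dx$ and $\int_0^1 \log(1+e^{\varphi^{(y)}})\, dx$ according to whether $x<y$ or $x\ge y$ shows $\mc G(\rho,\varphi^{(y)}) = \tilde{\mc G}(\rho,y)$.

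For the reverse inequality, the main observation is that every $\varphi \in \mc F$ admits the barycentric representation
\begin{equation*}
\varphi(x) \;=\; \int_{[0,1]} \varphi^{(y)}(x)\, d\tilde\mu(y)\;,
\end{equation*}
where $\tilde\mu := (\varphi_1-\varphi_0)^{-1}\mu$ is the probability measure on $[0,1]$ obtained by normalising the measure $\mu$ associated to $\varphi$. Indeed, the right hand side equals $\varphi_0\, \tilde\mu((x,1]) + \varphi_1\, \tilde\mu([0,x]) = \varphi_0 + (\varphi_1-\varphi_0)\tilde\mu([0,x]) = \varphi_0 + \mu([0,x])$, which is precisely $\varphi(x)$ by the definition \eqref{Fa} of $\mc F$.

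The first two terms of $\mc G(\rho,\varphi)$ in \eqref{Grf-in}, namely $\int_0^1 s(\rho)\, dx$ and $\int_0^1 (1-\rho)\varphi\, dx$, are linear (actually affine) in $\varphi$, so by Fubini they decompose as $\int_{[0,1]}$ of the corresponding quantities for $\varphi^{(y)}$ against $d\tilde\mu(y)$. The third term involves $-\log(1+e^\varphi)$, which is concave in $\varphi$ because $\xi\mapsto \log(1+e^\xi)$ has strictly positive second derivative. Applying Jensen's inequality pointwise in $x$ gives
\begin{equation*}
\log\Bigl(1+\exp\!\int \varphi^{(y)}(x)\, d\tilde\mu(y)\Bigr) \;\le\; \int \log\bigl(1+e^{\varphi^{(y)}(x)}\bigr)\, d\tilde\mu(y)\;,
\end{equation*}
and integrating in $x$ yields, after combining with the linear terms, $\mc G(\rho,\varphi) \ge \int_{[0,1]} \mc G(\rho,\varphi^{(y)})\, d\tilde\mu(y) = \int_{[0,1]} \tilde{\mc G}(\rho,y)\, d\tilde\mu(y) \ge \inf_{y\in[0,1]} \tilde{\mc G}(\rho,y)$. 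Taking the infimum over $\varphi\in\mc F$ closes the argument.

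No serious obstacle is expected: the integrands are uniformly bounded since $\varphi_0\le \varphi^{(y)}(x)\le \varphi_1$, so Fubini and the pointwise Jensen inequality apply without qualification. The only point worth stressing is the barycentric decomposition, which here has the explicit closed form above rather than requiring an appeal to Choquet's theorem.
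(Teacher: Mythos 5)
Your proof is correct, but it takes a different route from the paper's. The paper observes that $\mc G(\rho,\cdot)$ is concave and lower semicontinuous on the compact convex set $\mc F$, so by the Bauer minimum principle (Krein--Milman applied to the face of minimizers) the infimum is attained at an extreme point of $\mc F$; it then identifies these extreme points as $\{\varphi^{(y)}\}_{y\in[0,1]}$ by noting that the extreme points of $\mc P_m([0,1])$ are the Dirac masses $m\,\delta_y$. You instead make the Choquet-type decomposition entirely explicit, writing $\varphi = \int \varphi^{(y)}\,d\tilde\mu(y)$ with $\tilde\mu=(\varphi_1-\varphi_0)^{-1}\mu$, and then get the lower bound by applying Jensen pointwise in $x$ to the convex map $\xi\mapsto\log(1+e^\xi)$ and Fubini on the linear terms. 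The content is of course the same---your Jensen step is exactly what underlies the abstract statement that a concave functional is minimized at extreme points---but your version is more elementary and self-contained: it avoids any appeal to Krein--Milman or to the classification of extreme points of $\mc P_m$, and it yields the quantitative inequality $\mc G(\rho,\varphi)\ge\int\tilde{\mc G}(\rho,y)\,d\tilde\mu(y)$ rather than merely existence of a step-function minimizer. The paper's version is shorter and reuses the same extremal-point characterization in the proof of Lemma~\ref{t:mina}; your argument would need a mild adaptation (restricting the barycentric decomposition to the subsets $\mc F^\pm$) to serve there. Both are fully rigorous; the choice is a matter of taste.
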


\begin{proof}
Since the functional $\mc G(\rho,\cdot)$ is concave and $\mc F$ is a
compact convex set, the infimum on the left hand side of \eqref{dav}
is achieved when $\varphi$ belongs to the extremal elements of $\mc
F$. We thus have to show that the extremal elements of $\mc F$ are
$\{\varphi^{(y)},\,y\in[0,1]\}$. This is easily proven recalling
\eqref{Fa} and noticing that, given $m>0$, the extremal elements of
$\mc P_m([0,1])$ are $\{m \,\delta_y,\, y\in[0,1]\}$, where $\delta_y$
is the Dirac measure at $y$.
\end{proof}
\subsection*{Non uniqueness of the minimizers}  

The statements of Theorems~\ref{t:1-1} and \ref{t:degmine} imply that
for $\epsilon$ small enough there exists $\rho\in M$ such that $\ms
P_\epsilon(\rho)$ is a not a singleton. This result can however be
directly proven by a simpler argument, presented below, which also
shows that there exists points in $\ms P_\epsilon(\rho)$ which are not
in $\ms F_\epsilon(\rho)$.
  
\begin{lemma}
\label{s07}
Fix $\varphi_0<\varphi_1$. There exists
$\varepsilon^*\in(0,\epsilon_0)$ and a smooth function $\rho$ in $M$
such that $K_{\rho,\varepsilon}$ has at least two fixed points
for $\varepsilon<\varepsilon^*$.
\end{lemma}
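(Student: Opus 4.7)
The plan is to exhibit a smooth $\rho\in M$ for which the inviscid functional $\mc G(\rho,\cdot)$ admits two distinct global minimizers, and then to use the $\Gamma$-convergence of Theorem~\ref{t:gcg} together with a localized version of Lemma~\ref{s02} to produce two distinct fixed points of $K_{\rho,\varepsilon}$ for all sufficiently small $\varepsilon$.

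First I would reduce the inviscid problem to one dimension via Proposition~\ref{t:gin=tgin}, replacing $\inf_{\varphi\in\mc F}\mc G(\rho,\varphi)$ by $\inf_{y\in[0,1]}\tilde{\mc G}(\rho,y)$. A direct differentiation gives
\begin{equation*}
\partial_y\tilde{\mc G}(\rho,y) \;=\; (\varphi_0-\varphi_1)\big(1-\rho(y)\big) \,+\, \log\frac{1+e^{\varphi_1}}{1+e^{\varphi_0}},
\end{equation*}
so critical points in $(0,1)$ are precisely the solutions of $\rho(y)=\bar c$, where $\bar c := 1 - (\varphi_1-\varphi_0)^{-1}\log[(1+e^{\varphi_1})/(1+e^{\varphi_0})]$. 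Jensen's inequality applied to the strictly convex function $\varphi\mapsto \log(1+e^\varphi)$ gives $\bar c\in(1-\rho_1,1-\rho_0)\subset(0,1)$, and since $\partial_y^2\tilde{\mc G}(\rho,y) = (\varphi_1-\varphi_0)\rho'(y)$, the critical point $y$ is a local minimum or maximum according to the sign of $\rho'(y)$.

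Next I would pick a smooth $\rho\in M$ whose level set $\{\rho=\bar c\}$ consists of three points $y_1<y_3<y_2$ in $(0,1)$, crossed transversally with alternating sign of $\rho'$, so that $y_1,y_2$ are strict local minima of $\tilde{\mc G}(\rho,\cdot)$. A one-parameter deformation of $\rho$ (e.g.\ shifting mass between the intervals $[y_1,y_3]$ and $[y_3,y_2]$) changes $\tilde{\mc G}(\rho,y_1)-\tilde{\mc G}(\rho,y_2)$ continuously; by the intermediate value theorem the deformation parameter can be chosen so that $\tilde{\mc G}(\rho,y_1)=\tilde{\mc G}(\rho,y_2)$. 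By the strict concavity of $\varphi\mapsto \mc G(\rho,\varphi)$ on the convex set $\mc F$, global minimizers lie among the extreme points $\varphi^{(y)}$, so for this $\rho$ the step functions $\varphi^{(y_1)}$ and $\varphi^{(y_2)}$ are two distinct global minimizers of $\mc G(\rho,\cdot)$.

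Finally I would pass to small $\varepsilon$ as follows. Choose disjoint open neighborhoods $U_1,U_2\subset\mc F$ of $\varphi^{(y_1)},\varphi^{(y_2)}$ so small that $\varphi^{(y_i)}$ is the unique minimizer of $\mc G(\rho,\cdot)$ on $\overline{U_i}$, which guarantees $\inf_{\partial U_i}\mc G(\rho,\cdot)>\mc G(\rho,\varphi^{(y_i)})$. Let $\varphi^{(i)}_\varepsilon\in\overline{U_i}$ attain the infimum of $\mc G_\varepsilon(\rho,\cdot)$ on the compact set $\overline{U_i}$. A recovery sequence for $\varphi^{(y_i)}$ contained in $U_i$ gives, via the $\Glimsup$ inequality of Theorem~\ref{t:gcg}, $\limsup_\varepsilon\inf_{\overline{U_i}}\mc G_\varepsilon(\rho,\cdot)\le \mc G(\rho,\varphi^{(y_i)})$; while the $\Gliminf$ inequality applied to any sequence in the compact set $\partial U_i$ yields $\liminf_\varepsilon\inf_{\partial U_i}\mc G_\varepsilon(\rho,\cdot)\ge \inf_{\partial U_i}\mc G(\rho,\cdot)>\mc G(\rho,\varphi^{(y_i)})$. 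For $\varepsilon$ smaller than some $\varepsilon^*>0$, these two bounds force $\varphi^{(i)}_\varepsilon$ into the interior of $U_i$, making it a local minimizer of $\mc G_\varepsilon(\rho,\cdot)$ over $\mc F$. A localized version of the gradient flow argument of Lemma~\ref{s02}, based on the fact that the flow $v(t)$ starting from $\varphi^{(i)}_\varepsilon$ is continuous in time, stays in $U_i$ for short times, and cannot decrease $\mc G_\varepsilon(\rho,\cdot)$ below its local minimum value, shows $\varphi^{(i)}_\varepsilon\in\ms P(\rho)$. Since $U_1\cap U_2=\varnothing$, the two fixed points are distinct. The hard part will be making precise this localization of the gradient flow argument and verifying the $\Gamma$-convergence-based separation estimate along $\partial U_i$ in the weak topology of $\mc F$.
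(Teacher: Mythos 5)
Your plan is genuinely different from the paper's, and substantially more elaborate than necessary. The paper proves Lemma~\ref{s07} by a direct two-line construction: take the affine function $\varphi(x)=\varphi_0(1-x)+\varphi_1 x$ and set $\rho = 1/(1+e^\varphi)\in M^o$. With this choice $\varphi_{xx}=0$ and $\rho-(1+e^\varphi)^{-1}=0$, so the Euler--Lagrange equation \eqref{Deq} holds trivially and $\varphi\in\ms P_\varepsilon(\rho)$. In the second variation \eqref{f08}, the coefficient of $h_x^2$ equals $\varepsilon/[\varphi_x(1-\varepsilon\varphi_x)]=\varepsilon\,\varepsilon_0/(1-\varepsilon/\varepsilon_0)$, which vanishes as $\varepsilon\downarrow 0$, while the negative zeroth-order term $-e^\varphi/(1+e^\varphi)^2$ is bounded away from zero; hence for $\varepsilon$ small the form is not positive semi-definite, so $\varphi$ is not a minimizer of $\mc G_\varepsilon(\rho,\cdot)$. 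A minimizer exists by compactness and lower semicontinuity, and since $\rho$ is smooth Lemma~\ref{s02} places it in $\ms P_\varepsilon(\rho)$; being a minimizer it is distinct from $\varphi$. No $\Gamma$-convergence is needed.

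Your route --- balancing the inviscid problem to produce two global minimizers of $\mc G(\rho,\cdot)$, then transferring to small $\varepsilon$ via $\Gamma$-convergence --- is in fact the strategy the paper uses for the stronger Theorem~\ref{t:degmine}, and it would prove more than the present lemma asks. The gap you flag is real: Lemma~\ref{s02}, as proved, covers only \emph{global} minimizers, and extending it to local minimizers requires verifying that the gradient flow \eqref{gfG} is $\mc F$-continuous at $t=0$ so that the trajectory stays in $\overline{U_i}$ for small $t$; only then does the Lyapunov inequality combined with $\mc G_\varepsilon(\rho,v(t))\ge\inf_{\overline{U_i}}\mc G_\varepsilon(\rho,\cdot)$ force $\frac{d}{dt}\mc G_\varepsilon(\rho,v(t))=0$ and hence membership in $\ms P_\varepsilon(\rho)$. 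That continuity does hold (strong $L^1$ convergence of uniformly bounded increasing functions in $\mc F$ implies convergence of the associated measures), but it is additional work. Alternatively, you could avoid the local-minimizer issue entirely by introducing, as in the paper's proof of Theorem~\ref{t:degmine}, a one-parameter family $\rho^{(\alpha)}$ and tuning $\alpha=\alpha(\varepsilon)$ so that $\inf_{\overline{U_1}}\mc G_\varepsilon(\rho^{(\alpha)},\cdot)=\inf_{\overline{U_2}}\mc G_\varepsilon(\rho^{(\alpha)},\cdot)$, making both minimizers global and then invoking Lemma~\ref{s02} directly. Either fix is viable, but note that for this particular lemma the direct construction above is far shorter.
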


\begin{proof}
Let $\varphi\in \mc F$ be the affine function $\varphi (x) = \varphi_0
\,(1-x)+ \varphi_1\, x$, and let $\rho\in M$ be given by $\rho =
1/(1+e^{\varphi})$.  Clearly, $\varphi$ is a fixed point of
$K_{\rho,\varepsilon}$.  To conclude the proof, it is enough to show
that, provided $\epsilon$ is small enough, $\varphi$ is not a
minimizer of $\mc G_\varepsilon (\rho, \cdot)$.  Recall the second
variation of $\mc G_\varepsilon (\rho, \cdot)$ evaluated at $\varphi$
is the quadratic form \eqref{f08}.  For the above choices of $\rho$
and $\varphi$, it is easy to check that this quadratic form is not
positive semi-definite for $\varepsilon$ small enough, which proves
the lemma.
\end{proof}

In order to prove Theorem~\ref{t:degmine}, we first analyze the
limiting variational problem \eqref{s-in} and characterize, for
suitable functions $\rho$, its minimizers.  Fix $\varphi_0 <
\varphi_1$, set $\upbar{\varphi}:= (\varphi_0+\varphi_1)/2$, and let
\begin{equation*}
\begin{split}    
& A:= 1 - \frac{ \log (1+e^{\varphi_1}) -\log (1+e^{\varphi_0})}
     {\varphi_1-\varphi_0}\;, \quad
A_+:= 1 - \frac{\log (1+e^{\upbar{\varphi}}) -\log (1+e^{\varphi_0})}
     {\upbar{\varphi}-\varphi_0}\;, \\ 
& \qquad\qquad\qquad\qquad
A_-:= 1 - \frac{\log (1+e^{\varphi_1})-\log (1+e^{\upbar{\varphi}})}
     {\varphi_1-\upbar{\varphi}}\;\cdot
\end{split}
\end{equation*}
Since the real function $x\mapsto \log(1+e^x)$ is strictly increasing,
strictly convex, and Lipschitz with Lipschitz constant one, we have
$0<A_-<A<A_+ <1$.  Fix a continuous function $\rho:[0,1]\to [0,1]$
satisfying the following condition. 
There exist three reals $0\le y_-<y_0<y_+\le 1$ such that:
$\rho(y_0)=\rho(y_\pm)=A$, $\rho(x)< A$ for $x\in
[0,y_-)\cup(y_0,y_+)$, $\rho(x)> A$ for $x\in(y_-,y_0)\cup(y_+,1)$,
and $ A_-< \rho(x)<A_+$ for $x\in [y_-,y_+]$.  Recalling $\mc
P_m([0,1])$, $m>0$, denotes the set of positive Borel measure on
$[0,1]$ with mass $m$ endowed with the topology of weak convergence,
let
\begin{equation*}
\begin{split}    
& \mc P_m^-([0,1]):= \big\{ \mu\in \mc P_m([0,1])\,:\; \mu([0,y_0])
\ge m/2 \big\} \;, \\
&\quad
\mc P_m^+([0,1]):= \big\{ \mu\in \mc P_m([0,1])\,:\; \mu([y_0,1])
\ge m/2 \big\}\; .
\end{split}
\end{equation*}
Note that $\mc P_m^\pm([0,1])$ is a closed convex subset of $\mc
P_m([0,1])$. Recalling \eqref{Fa}, let accordingly 
\begin{equation*}
\mc F^\pm := \big\{ \varphi\in \mc F \,:\: \varphi(x) = \varphi_0 + \mu ([0,x]) 
\textrm{ for some } \mu \in \mc P_{\varphi_1-\varphi_0}^\pm([0,1])
\big\}
\end{equation*}
and observe that $\mc F= \mc F^-\cup \mc F^+$.

\begin{lemma}
\label{t:mina}
Fix $\rho$ as above and set $\varphi^*_{\pm}:= \varphi_0 \mb
1_{[0,y_\pm)} + \varphi_1 \mb 1_{[y_\pm,1]}$. Then
\begin{equation*}
\mathrm{arg \,inf} \big\{ \mc G(\rho,\varphi)\,,\: \varphi\in \mc
  F^\pm\big\} = \big\{ \varphi^*_\pm \big\}
\end{equation*}
namely, the infimum of $\mc G(\rho,\cdot)$ over $\mc F^\pm$ is uniquely
achieved at $\varphi^*_\pm$.
If furthermore $\rho$ satisfies $\int_{y_-}^{y_+}\rho\,dx = A
(y_+-y_-)$ then $\inf_{\mc F^-} \mc G(\rho,\cdot) =\inf_{\mc F^+} \mc
G(\rho,\cdot)$.
\end{lemma}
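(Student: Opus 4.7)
\emph{Plan of proof.} The key idea is to exploit the strict concavity of $\varphi\mapsto\mc G(\rho,\varphi)$, which follows from strict concavity of $\varphi\mapsto -\log(1+e^\varphi)$. Since $\mc G(\rho,\cdot)$ is continuous on the compact convex set $\mc F^\pm$, the infimum is attained, and by strict concavity any minimizer must be an extreme point. The plan is therefore to identify the extreme points of $\mc F^+$ (the case of $\mc F^-$ being symmetric) and to minimize $\mc G(\rho,\cdot)$ over them. Setting $m:=\varphi_1-\varphi_0$ and writing $\mu\in\mc P_m^+([0,1])$, a short direct analysis of when $\mu$ can be decomposed as a nontrivial convex combination in $\mc P_m^+([0,1])$ shows that the extreme points are of two types: (a) single Diracs $m\delta_y$ with $y\in[y_0,1]$, giving $\varphi=\varphi^{(y)}$; and (b) balanced pairs $(m/2)\delta_{y_1}+(m/2)\delta_{y_2}$ with $y_1\in[0,y_0)$ and $y_2\in[y_0,1]$, arising when the constraint $\mu([y_0,1])\ge m/2$ is saturated.

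On family (a), writing $\tilde{\mc G}(\rho,y):=\mc G(\rho,\varphi^{(y)})$ as in Proposition~\ref{t:gin=tgin}, direct differentiation together with the definition of $A$ yields
\begin{equation*}
\frac{d}{dy}\tilde{\mc G}(\rho,y) \;=\; (\varphi_1-\varphi_0)\,\big[\rho(y)-A\big]\;.
\end{equation*}
The sign structure of $\rho-A$ dictated by the hypothesis --- $\rho=A$ at $y_-,y_0,y_+$ with alternating sign in between --- forces $\tilde{\mc G}(\rho,\cdot)$ to be strictly decreasing on $[y_0,y_+]$ and strictly increasing on $[y_+,1]$, so that its unique minimizer on $[y_0,1]$ is $y=y_+$, producing $\varphi^*_+$.

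On family (b), parameterized by $(y_1,y_2)\in[0,y_0)\times[y_0,1]$, analogous computations combined with the definitions of $A_\pm$ give
\begin{equation*}
\partial_{y_1}\mc G \;=\; (\upbar{\varphi}-\varphi_0)\,\big[\rho(y_1)-A_+\big]\;,\qquad \partial_{y_2}\mc G \;=\; (\varphi_1-\upbar{\varphi})\,\big[\rho(y_2)-A_-\big]\;.
\end{equation*}
Combining the two-sided bound $A_-<\rho<A_+$ on $[y_-,y_+]$ with the sign information on $[0,y_-)$ and $(y_+,1]$ yields $\rho<A_+$ throughout $[0,y_0)$ and $\rho>A_-$ throughout $[y_0,1]$, whence $\partial_{y_1}\mc G<0$ and $\partial_{y_2}\mc G>0$. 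Thus family (b) admits no interior critical point, and its infimum is only approached as $(y_1,y_2)\to(y_0,y_0)$, with limiting value $\tilde{\mc G}(\rho,y_0)>\tilde{\mc G}(\rho,y_+)$. Strict concavity forces the minimum on $\mc F^+$ to be attained at an extreme point, so it must coincide with $\varphi^*_+$, uniquely.

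Finally, under the extra hypothesis $\int_{y_-}^{y_+}\rho\,dx=A(y_+-y_-)$, the fundamental theorem of calculus applied to the derivative formula for $\tilde{\mc G}(\rho,\cdot)$ gives
\begin{equation*}
\tilde{\mc G}(\rho,y_+)-\tilde{\mc G}(\rho,y_-) \;=\; (\varphi_1-\varphi_0)\int_{y_-}^{y_+}\big[\rho(x)-A\big]\,dx \;=\; 0\;,
\end{equation*}
so $\mc G(\rho,\varphi^*_+)=\mc G(\rho,\varphi^*_-)$ and the two infima coincide. The main delicate point is the identification of the extreme points of $\mc F^\pm$, in particular handling the fact that the infimum of $\mc G$ over family (b) is only attained in the limit $(y_1,y_2)\to(y_0,y_0)$; this is resolved by the strict inequality $\tilde{\mc G}(\rho,y_0)>\tilde{\mc G}(\rho,y_+)$, which rules out family (b) as a source of minimizers. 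The rest reduces to a sign analysis of first-order conditions driven directly by the hypotheses on $\rho$.
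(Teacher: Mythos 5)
Your proof is correct and follows essentially the same route as the paper: strict concavity of $\mc G(\rho,\cdot)$ forces any minimizer over the compact convex set $\mc F^\pm$ to be an extreme point, the extreme points of $\mc P_m^\pm([0,1])$ are identified as single Diracs on one side of $y_0$ together with balanced pairs straddling $y_0$, and the sign conditions on $\rho - A$, $\rho - A_\pm$ do the rest. Your explicit derivative identities $\frac{d}{dy}\tilde{\mc G}(\rho,y)=(\varphi_1-\varphi_0)[\rho(y)-A]$ and $\partial_{y_1}\mc G=(\upbar\varphi-\varphi_0)[\rho(y_1)-A_+]$, $\partial_{y_2}\mc G=(\varphi_1-\upbar\varphi)[\rho(y_2)-A_-]$ simply spell out what the paper labels ``elementary computations,'' and the final identity via the fundamental theorem of calculus is the paper's ``direct computation.''
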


This result implies that if the function $\rho$ satisfies all
conditions of the previous lemma then
\begin{equation*}
  \mathrm{arg \,inf} \big\{ \mc G(\rho,\varphi)\,,\: \varphi\in \mc
  F \big\} = \big\{\varphi^*_-,\varphi^*_+\big\}
\end{equation*}
namely, the functional $\mc G(\rho,\cdot)$ on $\mc F$ has exactly two
minimizers.  In fact, by using Proposition~\ref{t:gin=tgin}, this can
be easily proven even if the condition $A_-< \rho(x)<A_+$, $x\in
[y_-,y_+]$, is dropped. We also mention that, as observed in
\cite{dls3}, if $\rho$ is constant and equal to $A$, then there exists
a one parameter family of minimizers for $\mc G(\rho,\cdot)$ which is
exactly the collection of the extremal elements
$\{\varphi^{(y)},\,y\in[0,1]\}$ of $\mc F$.

\begin{proof}[Proof of Lemma \ref{t:mina}.]
Since $\mc G (\rho,\cdot)$ is a strictly concave functional and $\mc
F^\pm$ is a compact convex set, the infimum can only be achieved at
the extremal elements of $\mc F^\pm$. These elements can be easily
characterized. Indeed, it is simple to check that the extremal points
of $\mc P^-_m([0,1])$ are given by $E^-_1\cup E^-_2$ where
\begin{equation*}
E^-_1 = \big\{ m \, \delta_x \,,\: x \in [0,y_0] \big\}
\,,\qquad
E^-_2 = \Big\{ \frac m2 \big( \delta_{x} + \delta_{x'} \big)\,,
\: x\in [0,y_0]\,,\,x'\in [y_0,1]\Big\} 
\end{equation*}
We denote by $\mc F^-_{\mathrm e,1}$ and $\mc F^-_{\mathrm e,2}$ the
corresponding subsets of $\mc F^-$ with $m=\varphi_1-\varphi_0$. Note
that $\varphi \in \mc F^-_{\mathrm e,1}$ iff $\varphi$ jumps from
$\varphi_0$ to $\varphi_1$ at some point $x\in [0,y_0]$ while $\varphi
\in \mc F^-_{\mathrm e,2}$ iff $\varphi$ jumps from $\varphi_0$ to
$\upbar{\varphi}$ at some point $x\in [0,y_0]$ and from
$\upbar{\varphi}$ to $\varphi_1$ at some point $x'\in[y_0,1]$.

We have thus reduced the original problem of the minimum over $\mc
F^-$ to a minimum problem in one and two real variables.  By using
that $\rho(x)< A_+$ for $x\in [0,y_0]$ and $\rho(x)> A_-$ for $x\in
[y_0,1]$, elementary computations show that the infimum of $\mc
G(\rho,\cdot)$ over $\mc F^-_{\mathrm e,2}$ is achieved when $\varphi$
has a single jump at $y_0$. Note that such a $\varphi$ belongs both to
$\mc F^-_{\mathrm e,2}$ and $\mc F^-_{\mathrm e,1}$.  Likewise, by
using that $\rho(x) < A$ for $x\in [0,y_-)$ and $\rho(x)>A$ for $x\in
(y_-,y_0)$, it is readily seen that the infimum over $\mc F^-_{\mathrm
  e,1}$ is uniquely achieved at $\varphi^*_-$.
  
The argument for $\mc F^+$ is the same and the last statement follows
from a direct computation.
\end{proof}

By using the variational convergence of $\mc G_\epsilon$ to $\mc G$
and the above lemma, we finally show, provided $\epsilon$ is small
enough and $\rho$ is suitably chosen, the functional  
$\mc G_\epsilon(\rho,\cdot)$ admits more than a single minimizer.

\begin{proof}[Proof of Theorem~\ref{t:degmine}]
  Fix a continuous function $\rho\in M$ bounded away from zero and one
  satisfying the condition stated above Lemma~\ref{t:mina} as well as
  the condition in the last statement of that lemma. Pick now a
  continuous positive function $\lambda:[0,1]\to \bb R_+$ such that
  $\mathrm{supp}\,\lambda \subset (y_-,y_0)$ and $\int_0^1\!dx \,
  \lambda =1$. Choose finally $\delta>0$ so small that for each
  $\alpha\in [-\delta,\delta]$ the function $\rho^{(\alpha)} := \rho +
  \alpha\lambda$ still satisfies the condition stated above
  Lemma~\ref{t:mina} with $y_0$ and $y_\pm$ independent on $\alpha$.
  Note however that for $\alpha\neq 0$ the functional $\mc
  G(\rho^{(\alpha)},\cdot)$ has a unique minimizer.
  The proof of the theorem will be accomplished by considering the one
  parameter family of functions $\{\rho^{(\alpha)}, |\alpha|\le
  \delta\}$ and showing that, for $\varepsilon$ small enough, there
  exists $\alpha_0$ for which $\mc
  G_\varepsilon(\rho^{(\alpha_0)},\cdot)$ has at least two distinct
  minimizers.

  Given $\epsilon>0$, let $g_\varepsilon : [-\delta,\delta]\to \bb R$
  be defined by
  \begin{equation*}
    g_\varepsilon(\alpha) \; := \; 
    \inf \big\{{\mc G}_\epsilon(\rho^{(\alpha)},\varphi)\,,\:
     {\varphi\in \mc F^+}  \big\} 
    - 
    \inf\big\{{\mc G}_\epsilon(\rho^{(\alpha)},\varphi)\,,\:
     {\varphi\in \mc F^-}  \big\} 
  \end{equation*}
  and observe that, in view of Lemma~\ref{t:cont}, 
  the function $g_\epsilon$ is continuous.
  Let $\mathrm{int\,} \mc F^\pm$ be the \emph{interior} of $\mc
  F^\pm$. By Theorem~\ref{t:gcg} and standard properties of
  $\Gamma$--convergence, see e.g.\ \cite[Prop.~1.18]{Braides}, 
  for each $\alpha\in [-\delta,\delta]$ 
  \begin{equation*}
    \inf_{\mc F^\pm} \mc G(\rho^{(\alpha)},\cdot) 
    \le \liminf_{\epsilon\downarrow 0} 
        \inf_{\mc F^\pm} {\mc G}_\epsilon(\rho^{(\alpha)},\cdot)
    \le \limsup_{\epsilon\downarrow  0} 
        \inf_{\mathrm{int\,} \mc F^\pm} {\mc G}_\epsilon(\rho^{(\alpha)},\cdot)
    \le \inf_{\mathrm{int\,} \mc F^\pm} \mc G(\rho^{(\alpha)},\cdot)  
  \end{equation*}
  Whence, using $\varphi^*_\pm \in \mathrm{int\,} \mc F^\pm$  and
  Lemma~\ref{t:mina},
  \begin{equation*}
    \lim_{\epsilon\downarrow  0} 
    \inf_{\varphi\in\mc F^\pm} 
    {\mc G}_\epsilon(\rho^{(\alpha)},\varphi) 
    = \mc G(\rho^{(\alpha)},\varphi^*_\pm)
  \end{equation*}
  so that
  \begin{equation*}
    \begin{split}
    \lim_{\epsilon\downarrow  0} g_\epsilon(\delta) 
    &= 
    \mc G(\rho^{(\delta)},\varphi^*_+) -  \mc G(\rho^{(\delta)},\varphi^*_-) 
    \\
    &=  \mc G (\rho,\varphi^*_+) -  \mc G(\rho,\varphi^*_-) 
    -\delta \int_0^1 \lambda \, (\varphi^*_+ -\varphi^*_-)\,dx
    =\delta (\varphi_1-\varphi_0)
    \end{split}
  \end{equation*}
  In particular, there exists $\epsilon_1\in (0,\epsilon_0)$ such that
  for any $\epsilon\in(0,\epsilon_1)$ we have $g_\epsilon(\delta)>0$
  and, by the same argument, $g_\epsilon(-\delta)<0$.

  Applying the theorem on the existence of zeros for a continuous
  function of a real variable, we deduce that for each 
  $\varepsilon \in (0,\epsilon_1)$ 
  there exists $\alpha_0\in (-\delta,\delta)$ such that
  $g_\epsilon(\alpha_0)=0$. This implies the existence of at least two
  distinct minimizers for $\mc G_\epsilon(\rho^{(\alpha_0)},\cdot)$.
  Note indeed that although the sets $\mc F^+$ and $\mc F^-$ are not
  disjoint, the minimizer of ${\mc G}_\epsilon(\rho^{(\alpha_0)},\cdot)$   
  over $\mc F^-$ cannot coincide with the one over $\mc F^+$ since
  they respectively converge to $\varphi^*_+$ and $\varphi^*_-$ as
  $\epsilon\downarrow 0$.  The last statement follows again from
  Theorem~\ref{t:gcg} and standard properties of $\Gamma$--convergence,
  see e.g.\ \cite[Thm.~1.21]{Braides}.
\end{proof}

\section{Hamilton--Jacobi equation}
\label{sec7}

In the context of diffusion processes in $\bb R^n$, the
quasi-potential is connected to a Hamilton-Jacobi equation. More
precisely, let $I$ be the action functional corresponding to the
Lagrangian $\bb L (x,\dot x)$ and denote by $\bb H (x,p)$ the
associated Hamiltonian.  Under suitable conditions, the
quasi-potential, as defined in \eqref{qpfw}, is then a
\emph{viscosity} solution to the Hamilton-Jacobi equation $\bb H(x,D
v) =0$ \cite{D,Pert}.  In an infinite dimensional setting, the theory
of Hamilton-Jacobi equation, in particular of stationary
Hamilton-Jacobi equation, is much less developed.  As
Theorem~\ref{t:S=V} gives a somewhat explicit expression for the
quasi-potential, the problem here discussed may reveal itself to be
a good example for the development of the theory of infinite
dimensional Hamilton-Jacobi equations. In this section, we present
some possible formulations of the Hamilton-Jacobi equation which seem
apt for the variational problems \eqref{qp-1} or \eqref{qp}.

It is not clear how to introduce a differentiable structure on the set
$M$ in such a way that the Hamiltonian $\bb H$ in \eqref{He} becomes a
well defined function on the cotangent bundle of $M$.  The
formalization of the Hamilton-Jacobi equation is thus a non trivial
issue.  In the following we consider two possibilities to circumvent
this problem.  In the first we simply consider the subset of $M$ given
by the smooth functions $\rho: [0,1]\to [0,1]$ which are bounded away
from zero and one and satisfy the boundary conditions.  In the second
one we exploit the symplectic transformation \eqref{scv} and deduce
the functional $\Lambda^*$, recall \eqref{dL*e} and \eqref{S=S-L},
solves the Hamilton-Jacobi equation with the Hamiltonian $\tilde{\bb  H}$
introduced in \eqref{tHam}.

Recall the definitions \eqref{ssg}, \eqref{Mo}, and \eqref{He}.  We
would like to claim that the functional $S^o_\epsilon$ solves the
Hamilton-Jacobi equation $\bb H (\rho, D U) =0$ in $M^o$.  In this
setting, we formulate the notion of viscosity solutions to such
Hamilton-Jacobi equation in terms of G\^ateaux sub-differentials,
recall \eqref{gsd}.

\begin{theorem}
  \label{t:gHJ}
  For each $\rho\in M^o$ we have 
  $D^\pm_\mathrm{G} S^o_\epsilon (\rho) \subset \mc H^1_0$. 
  Moreover, $S^o_\epsilon$ is a G\^ateaux viscosity solution to 
  $\bb H (\rho, D U) =0$ in $M^o$ namely, 
  the two following inequalities hold for any $\rho\in M^o$ 
  \begin{equation*}
    \begin{aligned}
      & \bb H (\rho, h) \le 0\;,
        \qquad
        \forall\: h \in D^+_\mathrm{G} S^o_\epsilon (\rho)\;,
      \\
      & \bb H (\rho, h) \ge 0\;, 
      \qquad  \forall\:h \in D^-_\mathrm{G} S^o_\epsilon (\rho)
            \;.
    \end{aligned}
  \end{equation*}
\end{theorem}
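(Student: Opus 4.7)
The plan is to exploit the decomposition $S^o_\epsilon=\ms S-\Lambda^*$ from \eqref{S=S-L} in conjunction with the characterization of $D^\pm_\mathrm{G}\Lambda^*(\rho)$ obtained in Proposition~\ref{t:DL*}. Since $\rho\in M^o$ is $C^2$ and bounded away from $0$ and $1$, the functional $\ms S$ defined in \eqref{dSs} is classically G\^ateaux differentiable at $\rho$ with derivative $s'(\rho)$. A short formal argument then shows that subtracting a G\^ateaux differentiable functional swaps the roles of $D^+_\mathrm{G}$ and $D^-_\mathrm{G}$, yielding
\begin{equation*}
D^+_\mathrm{G} S^o_\epsilon(\rho)=s'(\rho)-\mathrm{co\:}\ms F(\rho)\;,\qquad
D^-_\mathrm{G} S^o_\epsilon(\rho)=s'(\rho)-D^+_\mathrm{G}\Lambda^*(\rho)\;,
\end{equation*}
where the latter set is empty unless $\ms F(\rho)=\{\varphi\}$ is a singleton, in which case it equals $\{s'(\rho)-\varphi\}$.

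For the regularity claim $D^\pm_\mathrm{G} S^o_\epsilon(\rho)\subset\mc H^1_0$, I would invoke Lemma~\ref{s02}, which (since $\rho$ is smooth) gives $\ms F(\rho)\subset\ms P(\rho)$, together with Proposition~\ref{t:fp}(ii), which ensures that every $\varphi\in\ms P(\rho)$ belongs to $C^1([0,1])$, satisfies the boundary conditions $\varphi(0)=\varphi_0$, $\varphi(1)=\varphi_1$, and has $\delta\le\epsilon\varphi_x\le 1-\delta$ for some $\delta>0$. Since $s'(\rho)$ is $C^2$ with the same boundary values, each $s'(\rho)-\varphi$ lies in $\mc H^1_0$, and convex combinations preserve this.

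The viscosity inequality at the super-differential rests on two facts. From \eqref{He}, the map $h\mapsto\bb H(\rho,h)$ is the sum of a linear term and the nonnegative quadratic form $\epsilon\langle h_x,\sigma(\rho)h_x\rangle$, hence convex. By Proposition~\ref{t:fp}(iii), $\bb H(\rho,s'(\rho)-\varphi)=0$ for every $\varphi\in\ms F(\rho)$. Writing a generic $h\in D^+_\mathrm{G} S^o_\epsilon(\rho)$ as a convex combination $h=\sum_i\alpha_i\bigl(s'(\rho)-\varphi_i\bigr)$ with $\varphi_i\in\ms F(\rho)$, Jensen's inequality yields
\begin{equation*}
\bb H(\rho,h)\;\le\;\sum_i\alpha_i\,\bb H\bigl(\rho,s'(\rho)-\varphi_i\bigr)\;=\;0\;.
\end{equation*}
The sub-differential inequality is essentially automatic: by the identification above, $D^-_\mathrm{G} S^o_\epsilon(\rho)$ is either empty, in which case the bound holds vacuously, or equals $\{s'(\rho)-\varphi\}$ with $\varphi\in\ms P(\rho)$, giving equality with zero once again by Proposition~\ref{t:fp}(iii).

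The main obstacle I anticipate is the bookkeeping in the first step: one needs to match the duality used to define $D^\pm_\mathrm{G}\Lambda^*$ in Proposition~\ref{t:DL*} (which treats $\Lambda^*$ on $L^\infty$ equipped with the weak$^*$ topology) to the duality appropriate for $S^o_\epsilon$ restricted to $M^o$, so that the calculus of G\^ateaux differentials applies without ambiguity. Once that is settled, every other step reduces either to an already established fact or to a one-line convexity argument.
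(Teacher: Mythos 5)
Your proof is correct and follows essentially the same route as the paper: decompose $S^o_\epsilon = \ms S - \Lambda^*$ via \eqref{S=S-L}, use the G\^ateaux differentiability of $\ms S$ at smooth $\rho$ to identify $D^\pm_\mathrm{G}S^o_\epsilon(\rho)$ with $s'(\rho)-D^\mp_\mathrm{G}\Lambda^*(\rho)$ as given by Proposition~\ref{t:DL*}, deduce the regularity from Lemma~\ref{s02} and Proposition~\ref{t:fp}(ii), and then invoke Proposition~\ref{t:fp}(iii) together with the convexity of $\bb H(\rho,\cdot)$ for the viscosity inequalities. The topology/duality concern you raise is not a genuine obstruction: both $\ms S$ and $\Lambda^*$ are treated as functionals on $L^\infty$ with the weak* topology in duality with $L^1$, and the arithmetic of G\^ateaux super/sub-differentials under subtraction of a G\^ateaux differentiable term is the one-line verification you anticipate.
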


\begin{proof}
  Recall the decomposition \eqref{S=S-L} and fix $\rho\in M^o$.  It is
  straightforward to check that $\ms S$ is Gauteaux differentiable at
  $\rho$ and $D_\mathrm{G}\ms S(\rho) = s'(\rho)$. In view of
  Proposition~\ref{t:DL*} we deduce that $D^+_\mathrm{G} S^o_\epsilon (\rho)
  = s'(\rho) - \mathrm{co\:} {\ms F}(\rho)$ and
  \begin{equation*}
     D^-_\mathrm{G} S^o_\epsilon (\rho) =
     \begin{cases}
       s'(\rho) - \varphi 
       &\textrm{if ${\ms F}(\rho)=\{\varphi\}$ for some $\varphi\in\mc
         F$}
       \\
       \varnothing
       &\textrm{otherwise}
     \end{cases}
  \end{equation*}
  By Lemma~\ref{s02} and item (ii) in Proposition~\ref{t:fp}, we then
  deduce $D^\pm_\mathrm{G} S^o_\epsilon (\rho) \subset \mc H_0^1$.

  By item (iii) in Proposition~\ref{t:fp}, $\bb
  H(\rho,s'(\rho)-\varphi)=0$ for any $\rho\in M^o$ and any
  $\varphi\in \ms P(\rho)$. In particular, again by Lemma~\ref{s02},
  this equation holds for any $\rho\in M^o$ and $\varphi \in {\ms F}(\rho)$.  
  The explicit expression of the G\^ateaux sub-differentials above and
  the convexity of $\bb H(\rho,\cdot)$ on $\mc H_0^1$ now easily yield the
  statements.
\end{proof}

Since in our case $\upbar{\rho}_\epsilon$ is the unique, globally
attractive, fixed point of the flow defined by \eqref{eq:1}, we can
try to characterize the quasi-potential $V_\epsilon$, as defined in
\eqref{qp-1}, as the \emph{maximal} viscosity sub-solution of the
Hamilton-Jacobi equation $\bb H (\rho, DU) = 0$ satisfying
$U(\upbar{\rho}_\epsilon)=0$.  Our next result, which does not depend
on the special form of the flux $f$ and mobility $\sigma$, goes in
this direction.

\begin{theorem}
  \label{t:mHJ}
  Let $U_\epsilon: M \to \bb R$ be a lower semicontinuous functional
  such that $U_\epsilon(\upbar{\rho}_\epsilon)=0$ and satisfying the
  following condition.
  For each $T>0$ and each smooth path $u:[-T,0]\times[0,1]\to (0,1)$
  such that $u(t,0)=\rho_0$, $u(t,1)=\rho_1$, $t\in[-T,0]$, consider a
  partition  $[-T,0) = \bigcup_{k=1}^n [-T_{k},-T_{k-1})$ with 
  $T_0=0$ and $T_{n}=T$. Then there are elements $h^k\in \mc H^1_0$,
  $k=1,\ldots,n$ which are uniformly bounded in $\mc H^1_0$ and such
  that 
  \begin{equation}
    \label{subs}
    \bb H\big(u(-T_k),h^k \big) \le 0\;, \qquad k=1,\ldots,n
  \end{equation}
  and 
  \begin{equation}
    \label{condbrutta}
    \limsup_{\max |T_{k-1}-T_k| \to 0 } \; 
    \sum_{k=1}^n \Big[ 
    U_\epsilon ( -T_{k-1}) -U_\epsilon ( -T_{k}) 
    -\int_{-T_k}^{-T_{k-1}} \langle h^k,u_t\rangle \,dt \Big]  \le 0 \;.
\end{equation}
  Then $V_\epsilon\ge U_\epsilon$.
\end{theorem}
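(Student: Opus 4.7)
The plan is to establish $V_\epsilon(\rho)\ge U_\epsilon(\rho)$ by choosing, in the variational formula \eqref{e:2.6} for $I^\epsilon_{[-T,0]}$, a piecewise-constant-in-time test function assembled from the vectors $h^k$ supplied by the hypothesis. First I reduce to smooth paths via Lemma~\ref{dId}: any $u\in C([-T,0];M)$ with $u(-T)=\upbar\rho_\epsilon$, $u(0)=\rho$, and $I^\epsilon_{[-T,0]}(u)<\infty$ is approximated by smooth paths $u^n:[-T,0]\times[0,1]\to(0,1)$ matching the boundary data, with $I^\epsilon_{[-T,0]}(u^n)\to I^\epsilon_{[-T,0]}(u)$ and $u^n(0)\to\rho$ in $M$; the lower semicontinuity of $U_\epsilon$ will then transfer the bound from the smooth $u^n$ to $u$.

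For a smooth $u$ bounded away from $0$ and $1$, fix a partition $0=T_0<T_1<\dots<T_n=T$ with mesh $\delta=\max_k|T_k-T_{k-1}|$ and let $\{h^k\}_{k=1}^n\subset\mc H^1_0$ be the corresponding vectors given by the hypothesis, satisfying $\|h^k\|_{\mc H^1_0}\le R$ uniformly in $k$ and $\bb H(u(-T_k),h^k)\le 0$. Define $H(t):=h^k$ for $t\in[-T_k,-T_{k-1})$; this lies in $\mf H^1_0(\sigma(u))$, and a density argument in the supremum \eqref{e:2.6}, together with the identity $L^\epsilon_u(H)-\epsilon\<\!\<H_x,\sigma(u)H_x\>\!\>=\int_{-T}^0[\<H,u_t\>-\bb H(u(t),H(t))]\,dt$ valid for smooth $u$, yields
\begin{equation*}
I^\epsilon_{[-T,0]}(u)\;\ge\;\sum_{k=1}^n\int_{-T_k}^{-T_{k-1}}\big[\<h^k,u_t\>-\bb H(u(t),h^k)\big]\,dt.
\end{equation*}
The key analytic ingredient is a uniform Lipschitz estimate for the Hamiltonian along the path: the smoothness of $u$ and the explicit form \eqref{He} of $\bb H$ give $|\bb H(u(t),h)-\bb H(u(-T_k),h)|\le C(u,R)\,|t+T_k|$ uniformly in $\|h\|_{\mc H^1_0}\le R$, and combined with $\bb H(u(-T_k),h^k)\le 0$ this bounds $\sum_k\int_{-T_k}^{-T_{k-1}}\bb H(u(t),h^k)\,dt\le C(u,R)\,T\,\delta$, which vanishes as $\delta\to 0$.

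To close, I combine the above with the telescoping identity $\sum_{k=1}^n[U_\epsilon(u(-T_{k-1}))-U_\epsilon(u(-T_k))]=U_\epsilon(\rho)-U_\epsilon(\upbar\rho_\epsilon)=U_\epsilon(\rho)$ and hypothesis \eqref{condbrutta} to obtain
\begin{equation*}
U_\epsilon(\rho)\;\le\;\sum_{k=1}^n\int_{-T_k}^{-T_{k-1}}\<h^k,u_t\>\,dt+o_\delta(1)\;\le\;I^\epsilon_{[-T,0]}(u)+o_\delta(1),
\end{equation*}
whence $I^\epsilon_{[-T,0]}(u)\ge U_\epsilon(\rho)$ upon sending $\delta\downarrow 0$. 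The density reduction and taking the infimum over $T$ and admissible $u$ then give $V_\epsilon(\rho)\ge U_\epsilon(\rho)$. The main technical point is the uniform-in-$k$ Lipschitz bound on $\bb H(u(\cdot),h^k)$, which is precisely where the uniform $\mc H^1_0$-bound on the $\{h^k\}$ assumed in the hypothesis is essential; without it, the error $\sum_k(T_k-T_{k-1})^2$ could not be controlled in terms of $\delta$.
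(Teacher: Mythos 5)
Your proposal is correct and follows essentially the same route as the paper: reduce to smooth paths bounded away from $0$ and $1$ via Lemma~\ref{dId} and the lower semicontinuity of $U_\epsilon$, plug the piecewise-constant test field $H(t)=h^k$ into \eqref{f04}, use the uniform $\mc H^1_0$-bound on the $h^k$'s together with the smoothness of $u$ to show $\sum_k\int_{-T_k}^{-T_{k-1}}\bb H(u(t),h^k)\,dt\to 0$ as the mesh vanishes, and combine the telescoping sum with hypothesis \eqref{condbrutta}. The argument and the key estimate are the same as in the paper; your explicit Lipschitz bound for $t\mapsto\bb H(u(t),h)$ spells out what the paper leaves implicit.
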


Note that condition \eqref{condbrutta} basically requires that $h^k$
belongs to the super-dif\-fe\-ren\-tial of $U_\epsilon$ at the point
$u(-T_k)$ with a uniform control over the smooth path $u(t)$, $t\in
[-T,0]$. Equation \eqref{subs} is thus a viscosity formulation of $\bb
H (\rho, DU_\epsilon) \le 0$.  As a matter of fact, the proof of the
above theorem is quite similar to the one of the lower bound in
Theorem~\ref{t:S=V} proven in Section~\ref{sec4} and we only sketch
the argument.

\begin{proof}
Let $U_\epsilon: M \to \bb R$ be as in the statement of the theorem. 
In view of Lemma \ref{dId} and the lower semicontinuity of $U_\epsilon$, it is
enough to prove the following statement. Fix $\rho\in M^o$, $T>0$, and
a smooth path $u\in C([-T,0];M^o)$ such that
$u(-T)=\upbar\rho_\epsilon$, $u(0)=\rho$; then $I_{[-T,0]}(u)\ge
U_\epsilon(\rho)$.

Consider a partition $[-T,0) = \bigcup_{k=1}^n [-T_{k},-T_{k-1})$ with
$T_0=0$ and $T_{n}=T$.  For $k=1,\cdots,n$, let $h^k \in \mc H^1_0$ as
in the statement of the theorem and choose in \eqref{f04} the
piecewise constant path $H(t)= h^{k}$ for $t\in [-T_{k},-T_{k-1})$,
$k=1,\cdots,n$ and $H(0) = h^1$.  By assumption, $\bb H(u(-T_k),h^k)
\le 0$. Whence, by the smoothness of the path $u$ and the assumption
that $|h^k|_{\mc H_0^1}$ is uniformly bounded,
\begin{equation*}
\limsup_{n\to\infty} \sum_{k=1}^n \int_{-T_{k}}^{-T_{k-1}}
\bb H \big( u(t) , h^k\big)\, dt\; \le \; 0
\end{equation*}
provided the mesh of the partition vanishes as $n\to\infty$.

Again by assumption,
\begin{equation*}
\liminf_{n\to\infty}    
\sum_{k=1}^n \Big\{ \int_{-T_{k}}^{-T_{k-1}} \langle h^k, u_t \rangle
\, dt - \big[ U_\epsilon(u(-T_{k-1})) - U_\epsilon(u(-T_{k})) \big] \Big\}
\;\ge \; 0
\end{equation*}
provided the mesh of the partition vanishes as $n\to\infty$.
We then deduce that
\begin{equation*}
I_{[-T,0]}(u) \ge U(u(0)) - U(u(-T)) = U(\rho)\;,
\end{equation*}
which concludes the proof. 
\end{proof}

We next discuss a formulation of the Hamilton-Jacobi equation in a
strong topology and in the whole set $M$. Recalling \eqref{S=S-L}, the
basic idea is to look for an equation for $\Lambda^*$, equivalently to
exploit the symplectic transformation \eqref{scv}.

Consider the space $L^1([0,1])$ equipped with the strong topology and
denote by $\hat M$ its (closed) subset given by $\hat M := \{ \rho\in
L^1([0,1]) :\, 0\le \rho\le 1\}$.  There is nothing really peculiar
with the choice of $L^1([0,1])$, any $L^p([0,1])$ with $p\in
[1,\infty)$ would lead to the same results; $L^\infty([0,1])$ might
look more natural, but its lack of separability (with respect to the
strong topology) prevents its use.  Let
\begin{equation*}
\mc W  :=\big\{ w\in W^{2,1}([0,1])\,:\: 
w(0)= \varphi_0 \,,\, w(1)= \varphi_0 \big\}\;,
\end{equation*}
where $W^{2,1}([0,1])$ is the Sobolev space of the functions
whose second (weak) derivative belongs to $L^1([0,1])$.  
Let $\hat{\bb H} : \hat M \times\mc W \to \bb R$ 
be the Hamiltonian defined by  
\begin{equation}
\label{dhH}
\begin{split}
\hat{\bb H} (\rho,w) \; &:=\; 
\;-\; \epsilon\, \langle w_x, \sigma(\rho) w_x\rangle 
\;-\; \epsilon\, \langle \rho, w_{xx} \rangle \\
& \phantom{:=\;}\; +\; \langle  \sigma(\rho), w_x \rangle
\;-\; \rho_1 \big[ 1-\epsilon w_x(1) \big] 
\;+\; \rho_0 \big[ 1-\epsilon w_x(0) \big] \;. 
\end{split}
\end{equation}
The relationship of $\hat{\bb H}$ to the original Hamiltonian $\bb H$
is the following.  Consider the anti-symplectic transformation
$(\rho,h)\mapsto (\rho,w)$ where $w=s'(\rho)-h$. The associated
Hamiltonian $\hat{\bb H}(\rho,w) = - \bb H(\rho,s'(\rho)-w)$ is then
the one defined above. We remark that while the identity 
$\hat{\bb H}(\rho,w) = - \bb H(\rho,s'(\rho)-w)$ holds only for
$\rho\in M^o$, recall \eqref{Mo}, the Hamiltonian $\hat{\bb H}$ is defined for all $\rho$
in $\hat M$.

We claim that the functional $\hat \Lambda$ in \eqref{dhLe} is a
viscosity solution of the Hamilton-Jacobi equation $\hat{\bb H} (\rho,
DU) =0$, $\rho\in \hat M$.  In order to state precisely this result,
we recall the notion of Fr\'echet sub-differentials.  Let $B$ be a
Banach space and denote by $B^*$ its dual. The norms in $B$ and $B^*$
are respectively denoted by $|\cdot|_B$ and $|\cdot|_{B^*}$.  A
function $f:B\to \bb R$ is \emph{Fr\'echet differentiable} at $x\in B$
iff there exists $\ell\in B^*$ such that
\begin{equation*}
  \lim_{y\to x} \frac{ f(y)- f(x) - \langle\ell, y-x\rangle}{|y-x|_B} =0  
\end{equation*}
in this case we denote $\ell$ by $D_\mathrm{F} f(x)$.  In general, the
\emph{Fr\'echet sub-differential} $D^-_\mathrm{F} f(x)$ and
\emph{Fr\'echet super-differential} $D^+_\mathrm{F} f(x)$ of $f$ at
the point $x$ are defined as the, possibly empty, convex subsets of
$B^*$
\begin{equation*}
\begin{aligned}
& D^+_\mathrm{F}f(x)
:=
\Big\{\ell\in B^*\,:\;
\limsup_{y\to x} \: \frac{f(y)-f(x) - \langle\ell, y-x \rangle  }{|y-x|_B}
\leq 0 \Big\}\,, \\
& D^-_\mathrm{F} f(x)
:=\Big\{ \ell\in B^*\,:\;
\liminf_{y\to x}\: \frac{f(y)-f(x) - \langle\ell, y-x \rangle  }{|y-x|_B}
\ge 0 \Big\}\;.
\end{aligned}
\end{equation*}

\begin{theorem}
  \label{t:fHJ}
  The functional  $\hat{\Lambda}
  :\big(L^1([0,1]),\mathrm{strong}\big) \to \bb R$ is convex and
  continuous, in particular locally Lipschitz.  Moreover, for each
  $\rho\in L^1([0,1])$ we have $D^\pm_\mathrm{F}
  \hat{\Lambda} (\rho) \subset \mc W$.  Finally,
  $\hat{\Lambda}$ is a Fr\'echet viscosity solution of
  $\hat{\bb H} (\rho, D U) =0$ in $\hat M$ namely, the two
  following inequalities hold for any $\rho\in \hat M$
  \begin{equation*}
    \begin{aligned}
      & \hat{\bb H} (\rho, w) \le 0\;,
        \qquad
        \forall\: w \in D^+_\mathrm{F} \hat{\Lambda} (\rho)\;,
      \\
      & \hat{\bb H} (\rho, w) \ge 0 \;,
      \qquad  \forall\:w \in D^-_\mathrm{F} \hat{\Lambda} (\rho)
            \;.
    \end{aligned}
  \end{equation*}
\end{theorem}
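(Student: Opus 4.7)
The plan is to deduce the statement from three ingredients already available: the sub-differential characterization provided by Proposition~\ref{t:DL*} combined with the identity \eqref{f18}, the regularity of elements of $\ms P(\rho)$ given by Proposition~\ref{t:fp}(ii), and a direct computation showing that on the manifold $\Sigma$ of Section~\ref{s:hp} the transformed Hamiltonian $\hat{\bb H}$ vanishes. I would start with convexity of $\hat\Lambda$, which is immediate because it is a supremum of affine functions. To obtain continuity I would first check finiteness on $L^1([0,1])$: since $\varphi_0\le\varphi\le\varphi_1$ for every $\varphi\in\mc F$ with $\Lambda(\varphi)<+\infty$ and $\Lambda$ is uniformly bounded below on $\mc F$, one finds $\hat\Lambda(\rho)\le C|\rho|_{L^1}+C'$; a convex function finite and locally bounded above on a Banach space is then automatically locally Lipschitz.

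Next, since $\hat\Lambda$ is convex and continuous, its Fr\'echet sub-differential agrees with the Fenchel one, so by \eqref{f18} and Proposition~\ref{t:DL*} I would identify $D^-_{\mathrm F}\hat\Lambda(\rho)=\partial\hat\Lambda(\rho)=\mathrm{co\:}\ms F(\rho)$. By Theorem~\ref{t:1-1} and Proposition~\ref{t:fp}(ii), every $\varphi\in\ms F(\rho)\subset\ms P(\rho)$ lies in $C^1([0,1])$ with Lipschitz derivative and satisfies the boundary conditions, hence $\varphi\in W^{2,\infty}([0,1])\cap\mc F\subset\mc W$; convexity of $\mc W$ then yields $D^-_{\mathrm F}\hat\Lambda(\rho)\subset\mc W$. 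For the super-differential I would invoke an elementary convex-analysis observation: if $\ell\in D^+_{\mathrm F}f(\rho)$ for a convex continuous $f$, comparing the one-sided Fr\'echet estimate with the sub-differential inequality for an arbitrary $\ell_0\in\partial f(\rho)$ forces $\ell_0=\ell$, so that $\partial f(\rho)=\{\ell\}$. Consequently $D^+_{\mathrm F}\hat\Lambda(\rho)\subset\partial\hat\Lambda(\rho)\subset\mc W$.

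The core step, and the one I expect to be the main technical obstacle, is the pointwise identity $\hat{\bb H}(\rho,\varphi)=0$ for every $\rho\in\hat M$ and every $\varphi\in\ms P(\rho)$. My approach is an explicit calculation: substitute the Euler--Lagrange equation \eqref{Deq}, which by Proposition~\ref{t:fp}(ii) holds a.e., into the term $-\epsilon\langle\rho,\varphi_{xx}\rangle$ in \eqref{dhH} and combine with the remaining interior terms; a second application of \eqref{Deq}, used this time to replace $\rho$ in the resulting factor, should collapse the integrand to the $x$-derivative of a function of $\varphi$ and $\varphi_x$ only, whose values at $x=0$ and $x=1$ cancel exactly the explicit boundary contributions of \eqref{dhH}. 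The delicate point is that $\rho$ has no regularity beyond $L^\infty$, so the reorganization must never differentiate $\rho$; this dictates the order in which the Euler--Lagrange equation is invoked.

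Granted this identity, the two viscosity inequalities would follow from concavity of $\hat{\bb H}(\rho,\cdot)$, since the only non-affine term is $-\epsilon\langle w_x,\sigma(\rho)w_x\rangle$. For any $w\in D^-_{\mathrm F}\hat\Lambda(\rho)=\mathrm{co\:}\ms F(\rho)$, written as a finite convex combination $w=\sum_i\lambda_i\varphi^i$ with $\varphi^i\in\ms F(\rho)\subset\ms P(\rho)$, concavity together with the identity give $\hat{\bb H}(\rho,w)\ge\sum_i\lambda_i\hat{\bb H}(\rho,\varphi^i)=0$, the super-solution inequality. Conversely, whenever $D^+_{\mathrm F}\hat\Lambda(\rho)\neq\varnothing$ its unique element $w$ must, by the preceding paragraph, satisfy $\ms F(\rho)=\{w\}\subset\ms P(\rho)$, so the same identity yields $\hat{\bb H}(\rho,w)=0\le 0$.
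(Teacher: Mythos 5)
Your overall strategy matches the paper's: convexity and local Lipschitz continuity from the supremum representation; identification of the Fr\'echet differentials with the convex sub-differential of $\hat\Lambda$ via \eqref{f18} and Proposition~\ref{t:DL*}; regularity of fixed points from Proposition~\ref{t:fp}(ii) to get the inclusion into $\mc W$; a pointwise computation showing $\hat{\bb H}(\rho,\varphi)=0$ for $\varphi\in\ms P(\rho)$, using the special form $\sigma(a)=a(1-a)$ and never differentiating $\rho$; and concavity of $\hat{\bb H}(\rho,\cdot)$ to promote equality to the two one-sided inequalities. The sub-solution direction and the core computation are handled essentially as in the paper (the paper's version integrates by parts against $e^\varphi/(1+e^\varphi)$ to absorb the explicit boundary terms, then factors $\sigma(\rho)-e^\varphi/(1+e^\varphi)^2$).

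The place where you diverge from the paper, and where your argument has a genuine gap, is the super-solution case. You write each $w\in D^-_{\mathrm F}\hat\Lambda(\rho)=\partial\hat\Lambda(\rho)$ as a \emph{finite} convex combination of elements of $\ms F(\rho)$, invoking the identity $\partial\hat\Lambda(\rho)=\mathrm{co\:}\ms F(\rho)$ from Proposition~\ref{t:DL*}. But the sub-differential of a continuous convex function is weak*-\emph{closed}, while the unclosed convex hull $\mathrm{co\:}\ms F(\rho)$ of a compact set need not be closed in an infinite-dimensional space --- and the paper itself exhibits cases where $\ms F(\rho)$ is a full one-parameter family $\{\varphi^{(y)},y\in[0,1]\}$, whose convex hull is genuinely not weak*-closed. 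So in general the sub-differential contains elements that are weak* limits, not finite convex combinations, and the Hamiltonian $\hat{\bb H}(\rho,\cdot)$ is not \emph{a priori} upper semicontinuous with respect to weak* convergence because of the terms $\langle\rho,w_{xx}\rangle$ and the boundary traces $w_x(0),w_x(1)$. The paper circumvents this by an entirely different route for the super-solution: it invokes the Lanford--Robinson Theorem~\ref{t:lr} to write $\partial\hat\Lambda(\rho)=\upbar{\mathrm{co}}\,\partial_{\mathrm{app}}\hat\Lambda(\rho)$, first proves $\hat{\bb H}(\rho,\varphi)=0$ on $\partial_{\mathrm{app}}\hat\Lambda(\rho)$ by approximating with unique tangent functionals $\varphi^n\in\hat{\ms P}(\rho^n)$, then uses concavity on finite convex combinations, and finally passes to the weak* closure by exploiting the uniform a priori bounds $\delta\le\epsilon\varphi_x\le 1-\delta$ and $|\varphi_{xx}|_{L^\infty}\le C$ that make the passage to the limit legitimate. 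Your plan needs this last approximation-and-compactness step; without it the reduction to finite convex combinations of $\ms F(\rho)$ is unjustified.
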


\begin{proof}
  Clearly $\hat\Lambda$ is convex and, by the argument used in
  Lemma \ref{t:cont}, continuous.  Fix $\rho\in \hat M$.
  Proposition~\ref{t:DL*} and the argument presented below
  Theorem~\ref{t:lr} show that $D^-_\mathrm{F}\hat\Lambda(\rho) =
  \mathrm{co\:} {\ms F}(\rho)$ and
  $D^+_\mathrm{F}\hat\Lambda(\rho)=\{\varphi\}$ when
  ${\ms F}(\rho)=\{\varphi\}$,
  $D^+_\mathrm{F}\hat\Lambda(\rho)=\varnothing$ otherwise.  In
  particular, by the inclusion ${\ms F}(\rho)\subset \ms P(\rho)$ and
  item (ii) in Proposition~\ref{t:fp},
  $D^\pm_\mathrm{F}\hat\Lambda(\rho)\subset \mc W$.
  
  As in the proof of Lemma~\ref{t:int}, we consider the integral
  operator $K_\rho$ defined in \eqref{g03} also for $\rho\in
  L^1([0,1])$ and denote by $\hat{\ms P}(\rho)\subset \mc F$ the set
  of fixed points of $K_\rho$. As in Proposition~\ref{t:fp}, if
  $\varphi\in\hat{\ms P}(\rho)$ then $\varphi\in C^1([0,1])$ and there
  exist $\delta\in (0,1)$ and $C<\infty$, depending only on
  $|\rho|_{L^1}$, such that $\delta\le \epsilon \varphi_x \le 1-\delta$ and
  $|\varphi_{xx}|_{L^1}\le C$. In particular the Euler-Lagrange
  equation \eqref{Deq} holds a.e.

  We now prove the sub-solution statement, that is $\hat{\bb
    H}(\rho,w) \le 0$ for any $\rho\in \hat M$ and $w\in
  D^+_\mathrm{F} \hat\Lambda (\rho)$. Since $D^+_\mathrm{F}
  \hat\Lambda(\rho) =\varnothing$ if ${\ms F}(\rho)$ is not a singleton, we
  need only to consider the case ${\ms F}(\rho)=\{\varphi\}$ for some
  $\varphi\in\mc F$. Hence, as shown in the proof of
  Lemma~\ref{t:int}, $\varphi\in\hat{\ms P}(\rho)$. 
  It is therefore enough to show that 
  for any $\rho\in\hat M$ and $\varphi\in \hat{\ms P}(\rho)$ 
  we have $\hat{\bb H}(\rho,\varphi)=0$. 
  This is essentially the same computation as
  the one used in the proof of item (iii) in Proposition~\ref{t:fp}.
  Recall $\varphi_i=\log[\rho_i/(1-\rho_i)]$, $i=0,1$ and observe that,
  in view of the bounds stated above, if $\varphi\in \hat{\ms
    P}(\rho)$ then 
  \begin{equation*}
    \begin{aligned}
    &\rho_1\big[1-\epsilon\varphi_x(1)]-\rho_0\big[1-\epsilon\varphi_x(0)\big]
     = \Big\langle \Big(\frac{e^\varphi}{1+e^\varphi}\Big)_{\! x} \,,\,
     1-\epsilon\varphi_x \Big\rangle 
     - \Big\langle \frac{e^\varphi}{1+e^\varphi}\,,\, 
     \epsilon \varphi_{xx}\Big\rangle
     \\
     & \qquad\qquad
     = \Big\langle \frac{e^\varphi}{\big( 1+e^\varphi \big)^2 }\,,\,
     \varphi_x (1-\epsilon\varphi_x) \Big\rangle 
     - \Big\langle \frac{e^\varphi}{1+e^\varphi}\,,\, 
     \epsilon\varphi_{xx}\Big\rangle
    \end{aligned}
  \end{equation*}
  so that, recalling \eqref{dhH}, 
  \begin{equation*}
     \hat{\bb H} (\rho,\varphi) = 
     \Big\langle \sigma(\rho) - 
     \frac{e^\varphi}{\big( 1+e^\varphi \big)^2} \,,\,
       \varphi_x (1-\epsilon\varphi_x) \Big\rangle 
       +\Big\langle \frac{e^\varphi}{1+e^\varphi} - \rho \,,\, \epsilon
       \varphi_{xx} \Big\rangle
  \end{equation*}
  At this point we use the special form of $\sigma$, i.e.\
  $\sigma(\rho)=\rho(1-\rho)$, which implies 
  \begin{equation*}
    \sigma(\rho) - \frac{e^\varphi}{\big( 1+e^\varphi \big)^2}
      = \Big( \frac{e^\varphi}{1+e^\varphi} - \rho \Big)
      \, \Big(\rho - \frac{1}{1+e^\varphi}\Big)
  \end{equation*}
  We then deduce 
  \begin{equation*}
    \hat{\bb H} (\rho,\varphi) = 
    \Big\langle  \frac{e^\varphi}{1+e^\varphi} - \rho \,,\,
    \epsilon \varphi_{xx} + \varphi_x(1-\epsilon\varphi_x) 
    \Big(\rho - \frac{1}{1+e^\varphi}\Big) \Big\rangle = 0
  \end{equation*}
  since $\varphi$ satisfies \eqref{Deq} a.e.

  It remains to prove the super-solution statement, that is $\hat{\bb
    H}(\rho,\varphi) \ge 0$ for any $\rho\in \hat M$ and $\varphi\in
  D^-_\mathrm{F} \hat\Lambda (\rho)$. To this end, consider first the
  case $\varphi\in \partial_{\mathrm{app}} \hat\Lambda (\rho)$. By
  definition, there exists a sequence $\{\rho^n\}\subset L^1([0,1])$
  converging to $\rho\in\hat M$ strongly in $L^1([0,1])$ such that 
  $\partial \hat\Lambda (\rho^n)=\{\varphi^n\}$ and $\varphi^n\to
  \varphi$ weakly* in $L^\infty([0,1])$. In particular, 
  $\varphi^n\in \hat{\ms P}(\rho^n)$ and therefore, by the bounds
  stated at the beginning of this proof,  
  $\varphi\in \hat{\ms P}(\rho)$. By the computation presented
  above, we then deduce $\hat{\bb H}(\rho,\varphi)=0$.
  We now consider the general case 
  $\varphi\in D^-_\mathrm{F}\hat\Lambda(\rho)
  =\partial\hat\Lambda(\rho)$. The concavity of 
  $\hat{\bb H}(\rho,\cdot)$ implies $\hat{\bb H}(\rho,\varphi)\ge 0$ for any
  $\varphi\in \mathrm{co\:} \partial_{\mathrm{app}} \hat\Lambda
  (\rho)$. In view of Theorem~\ref{t:lr}, it is now enough to take the
  weak* closure in $L^\infty([0,1])$. This is easily accomplished
  noticing there exist $\delta\in (0,1)$ and $C<\infty$ depending only
  on $\rho$ such that any $\varphi\in \mathrm{co\:}
  \partial_{\mathrm{app}} \hat\Lambda (\rho)$ satisfies $\delta\le
  \epsilon\varphi_x\le 1-\delta$ and $|\varphi_{xx}|_{L^\infty}\le C$.  
\end{proof}

\bigskip
\subsection*{Acknowledgements}
L.\ Bertini acknowledges the very kind hospitality at IMPA.
D.\ Gabrielli acknowledges the financial support of PRIN 
20078XYHVYS.

\end{document}